\tikzset{mynode/.style = {
    circle,
    minimum size=2pt,
    draw=black, fill=black}
}
\definecolor{darkblue}{rgb}{0,0,.75}
\definecolor{dark-gray}{gray}{0.3}
\definecolor{dkgray}{rgb}{.4,.4,.4}
\definecolor{dkblue}{rgb}{0,0,.5}
\definecolor{medblue}{rgb}{0,0,.75}
\definecolor{rust}{rgb}{0.5,0.1,0.1}
\numberwithin{equation}{section} 
\providecommand{\mathbold}[1]{\bm{#1}}  
\newtheorem{theorem}{Theorem}[section]
\newtheorem{lemma}[theorem]{Lemma}
\newtheorem{proposition}[theorem]{Proposition}
\newtheorem{corollary}[theorem]{Corollary}
\theoremstyle{definition}
\newtheorem{example}[theorem]{Example}
\newtheorem{remark}[theorem]{Remark}
\renewcommand{\phi}{\varphi}
\newcommand{\e}{\varepsilon}
\renewcommand{\mid}{\mathrel{\mathop{:}}} 
\newcommand{\zerovct}{\vct{0}} 
\providecommand{\mathbbm}{\mathbb} 
\newcommand{\R}{\mathbbm{R}}
\newcommand{\polar}{\circ}
\newcommand{\diff}[1]{\mathrm{d}{#1}}
\newcommand{\argmin}{\operatorname*{arg\; min}}
\newcommand{\Prob}{\mathbbm{P}}
\newcommand{\Expect}{\operatorname{\mathbb{E}}}
\newcommand{\vct}[1]{\mathbold{#1}}
\newcommand{\mtx}[1]{\mathbold{#1}}
\newcommand{\Proj}{\ensuremath{\mtx{\Pi}}} 
\newcommand{\ip}[2]{\langle {#1}, {#2} \rangle}
\newcommand{\norm}[1]{\Vert {#1}\Vert}
\newcommand{\sdim}{\delta}
\newcommand{\veps}{\varepsilon}
\newcommand{\A}{\mathcal{A}}
\newcommand{\mF}{\mathcal{F}}
\newcommand{\mN}{\mathcal{N}}
\newcommand{\mL}{\mathcal{L}}
\newcommand{\mR}{\mathcal{R}}
\newcommand{\relint}{\operatorname{relint}}
\newcommand{\lin}{\operatorname{lin}}
\newcommand{\B}{\mathcal{B}}
\newcommand{\F}{\mathcal{F}}
\DeclareMathOperator{\vol}{vol}
\newcommand{\sttwo}[2]{\genfrac{\{}{\}}{0pt}{}{#1}{#2}}
\title[Intrinsic Volumes of Polyhedral Cones]{Intrinsic Volumes of Polyhedral Cones: A combinatorial perspective}
\author{Dennis Amelunxen}
\address{Department of Mathematics\\ City University of Hong Kong\\ damelunx@cityu.edu.hk}
\author{Martin Lotz}
\address{School of Mathematics\\ The University of Manchester\\ martin.lotz@manchester.ac.uk}
\date{\today}
\keywords{intrinsic volumes, integral geometry, polyhedral cones, hyperplane arrangements, kinematic formula, polytope angles, convex cones, geometric probability, stochastic geometry}
\subjclass[2010]{05-02, 52B05, 52C35, 52A22, 52A39, 60D05}
\begin{document}

\begin{abstract}
The theory of intrinsic volumes of convex cones has recently found striking applications in areas such as convex optimization and compressive sensing.
This article provides a self-contained account of the combinatorial theory of intrinsic volumes for polyhedral cones. Direct derivations of the General Steiner formula, the conic analogues of the Brianchon-Gram-Euler and the Gauss-Bonnet relations, and the Principal Kinematic Formula are given. In addition, a connection between the characteristic polynomial of a hyperplane arrangement and the intrinsic volumes of the regions of the arrangement, due to Klivans and Swartz, is generalized and some applications are presented.
\end{abstract}

\maketitle

\section{Introduction}
The theory of conic intrinsic volumes (or solid/internal/external/Grassmann angles) has a rich and varied history, with origins dating back at least to the work of Sommerville~\cite{sommerville1927relations}. This theory has recently found renewed interest, owing to newly found connections with measure concentration and resulting applications in compressive sensing, optimization, and related fields~\cite{dota:09a,amelunxen2014intrinsic,edge,mccoy2014steiner,goldstein2014gaussian}. 
Despite this recent surge in interest, the theory remains somewhat inaccessible to a general public in applied areas; this is, in part, due to the fact that many of the results are found using varying terminology (cf.~Section~\ref{sec:angles}), or are available as special cases of a more sophisticated theory of spherical integral geometry~\cite{scwe:08,Gl,sant:76} that treats the subject in a level of generality (involving curvature/support measures or relying on differential geometry) that is usually more than what is needed from the point of view of the above-mentioned applications.
In addition, some results, such as the relation to the theory of hyperplane arrangements, have so far not been connected to the existing body of research. 

One aim of this article is therefore to provide the practitioner with a self-contained account of the basic theory of intrinsic volumes of polyhedral cones that requires little more background than some elementary polyhedral geometry and properties of the Gaussian distribution. While some of the material is classic (see, for example,~\cite{mcmullen1975non}), we blend into the presentation a generalization of a formula of Klivans and Swartz~\cite{klivans2011projection}, with a streamlined proof and some applications.

The focus of this text is on simplicity rather than generality, on finding the most natural relations between different results that may be derived in different orders from each other, and on highlighting parallels between different results. Despite this, the text does contain some generalizations of known results, provided these can be derived with little additional effort. 
In the interest of brevity, this article does not discuss the probabilistic properties of intrinsic volumes, such as their moments and concentration properties, nor does it go into related geometric problems such as random projections of polytopes~\cite{vershik1992asymptotic,affentranger1992random}.

Section~\ref{sec:prelims} is devoted to some preliminaries from the theory of polyhedral cones including a discussion of conic intrinsic volumes, a section devoted to clarifying the connections between different notation and terminology used in the literature, and a section introducing some concepts and techniques from the theory of partially ordered sets. In Section~\ref{se:steiner} we present a modern interpretation of the conic Steiner formula that underlies the recent developments in~\cite{edge,mccoy2014steiner,goldstein2014gaussian}, and in Section~\ref{sec:Gauss-Bonnet}, which is based on the influential work of McMullen~\cite{mcmullen1975non}, we derive and discuss the Gauss-Bonnet relation for intrinsic volumes. Section~\ref{sec:kin-form} contains a crisp proof of the Principal Kinematic Formula for polyhedral cones, and Section~\ref{sec:Kl-Sw} is devoted to a generalization of a result by Klivans and Swartz~\cite{klivans2011projection} and some applications thereof.

\subsection{Notation and conventions}
Throughout, we use boldface letters for vectors and linear transformations. To lighten the notation we denote the set consisting solely of the zero vector by~$\vct0$.
We use calligraphic letters for families of sets. We use the notation $\subseteq$ for set inclusion and $\subset$ for strict inclusion.

\section{Preliminaries}\label{sec:prelims}

General references for basic facts about convex cones that are stated here are, for example,~\cite{Barv,Z:95,Rock}. More precise references will be given when necessary.
A convex cone $C\subseteq \R^d$ is a convex set such that $\lambda C=C$ for all $\lambda>0$. A convex cone is polyhedral if it is a finite intersection of closed half-spaces. In particular, linear subspaces are polyhedral, and polyhedral cones are closed. In what follows, unless otherwise stated, all cones are assumed to be polyhedral and non-empty. A supporting hyperplane of a convex cone $C$ is a linear hyperplane $H$ such that~$C$ lies entirely in one of the closed half-spaces induced by~$H$ (unless explicitly stated otherwise, all hyperplanes will be linear, i.e., linear subspaces of codimension one). A proper face of $C$ is a set of the form $F=C\cap H$, where $H$ is a supporting hyperplane. A set $F$ is called a face of $C$ if it is either a proper face or $C$ itself.
The linear span $\lin(C)$ of a cone~$C$ is the smallest linear subspace containing~$C$ and is given by $\lin(C)=C+(-C)$, where $A+B=\{\vct{x}+\vct{y}\mid \vct{x}\in A, \vct{y}\in B\}$ denotes the Minkowski sum of two sets $A$ and $B$. The dimension of a face $F$ is $\dim F:=\dim\lin(F)$, and the relative interior $\relint(F)$ is the interior of~$F$ in $\lin(F)$. A cone is pointed if the origin $\zerovct$ is a zero-dimensional face, or equivalently, if it does not contain a linear subspace of dimension greater than zero. If $C$ is not pointed, then it contains a nontrivial linear subspace of maximal dimension $k>0$, given by $L=C\cap (-C)$, and $L$ is contained in every supporting hyperplane (and thus, in every face) of $C$. Denoting by $C/L$ the orthogonal projection of $C$ on the orthogonal complement of $L$, the projection $C/L$ is pointed, and $C=L+C/L$ is an orthogonal decomposition of $C$; we call this the canonical decomposition of~$C$.

We denote by $\F(C)$ the set of faces, $\F_k(C)$ the set of $k$-dimensional faces, and let $f_k(C)=|\F_k(C)|$ denote the number of $k$-faces of $C$. The tuple $\vct f(C)=(f_0(C),\dots,f_d(C))$ is called the {\em $f$-vector} of $C$. Note that if $C=L+C/L$ is the canonical decomposition, then $\vct f(C)$ is a shifted version of $\vct f(C/L)$. The most fundamental property of the $f$-vector is the {\em Euler relation}.

\begin{theorem}[Euler]
Let $C\subseteq\R^d$ be a polyhedral cone. Then
\begin{equation}\label{Euler}
  \sum_{i=0}^d (-1)^i f_i(C) =
  \begin{cases}
     (-1)^{\dim L} & \text{ if } C=L \text{ is a linear subspace,}
  \\ 0 & \text{ else.}
  \end{cases}
\end{equation}
\end{theorem}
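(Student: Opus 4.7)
The plan is to split into three cases according to the structure of $C$ and, in the essential case (pointed cones of positive dimension), reduce to the classical Euler relation for convex polytopes.

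The first case is when $C=L$ is a linear subspace. Then $L$ is its own unique face, so the alternating sum is simply $(-1)^{\dim L}$, as required. The opposite extreme is that $C$ is pointed with $C \neq \vct 0$; this is the heart of the argument. Since $C$ is pointed, I would choose a linear functional $\phi$ strictly positive on $C\setminus\vct 0$ and form the cross-section polytope $P := C \cap \phi^{-1}(1)$, which is a convex polytope of dimension $d-1$ (here $d = \dim C$). The key combinatorial fact is the face correspondence: $F \mapsto F \cap P$ is a bijection between the faces of $C$ other than $\vct 0$ and the nonempty faces of $P$ (including $P$ itself, which corresponds to $C$), and this bijection increases dimension by $1$. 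In particular $f_0(C) = 1$ and $f_i(C) = f_{i-1}(P)$ for $1\leq i\leq d$. Invoking the classical Euler relation for convex polytopes in the form $\sum_{j=0}^{d-1}(-1)^j f_j(P) = 1$ (counting $P$ as its own top-dimensional face), the alternating sum collapses:
\[
  \sum_{i=0}^d (-1)^i f_i(C) \;=\; 1 \;-\; \sum_{j=0}^{d-1}(-1)^j f_j(P) \;=\; 0.
\]

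The remaining case is that $C$ has a nontrivial lineality space $L$ with $k := \dim L > 0$ and $C\neq L$. Here I would invoke the canonical decomposition $C = L + C/L$ recalled in the preliminaries, where $C/L \subseteq L^\perp$ is pointed. Every face of $C$ contains $L$ and is of the form $L + F'$ for a unique face $F'$ of $C/L$, and $\dim(L+F') = k + \dim F'$. Consequently $f_i(C) = f_{i-k}(C/L)$ for $k\leq i\leq d$ (and vanishes otherwise), so the alternating sum factors as $(-1)^k\sum_{j}(-1)^j f_j(C/L)$. Since $C \neq L$ forces $C/L$ to be a pointed cone of positive dimension, the previous case applies to $C/L$, making the inner sum zero, and we again obtain $0$.

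The main obstacle is establishing the cone-to-polytope face bijection in the pointed case (which is a standard but nontrivial polyhedral-geometry lemma) and justifying the classical Euler relation for $P$; once those are in place, the remaining work is purely the bookkeeping above. Everything else — the subspace case and the non-pointed reduction — is immediate from the canonical decomposition.
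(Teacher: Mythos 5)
Your proposal is correct and follows essentially the same route the paper indicates: the paper proves the relation by intersecting a pointed cone with a suitable affine hyperplane to get a polytope with equivalent face structure (invoking the classical Euler relation for polytopes), and reduces the non-pointed case via the canonical decomposition $C=L+C/L$, exactly as you do. Your write-up merely fills in the dimension-shift bookkeeping that the paper leaves implicit, so there is nothing to flag.
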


This relation is usually stated and proved in terms of polytopes~\cite[Ch.~8]{Z:95}, but intersecting a pointed cone with a suitable affine hyperplane yields a polytope with a face structure equivalent to that of the cone; the general case can be reduced to the pointed case through the canonical decomposition. A short proof of the Euler relation along with remarks on the history of this result can be found in~\cite{L:97}.

\subsection{Duality}\label{sec:duality}
The {\em polar} cone of a cone $C\subseteq\R^d$ is defined as
\begin{equation*}
 C^{\polar} = \{\vct{x}\in \R^d \mid \forall \vct{y}\in C, \ip{\vct{x}}{\vct{y}}\leq 0\}.
\end{equation*}
If $C=L$ is a linear subspace, then $C^{\polar}=L^{\perp}$ is just the orthogonal complement,
and the polar cone of the polar cone is again the original cone, as will be shown below.
To any face $F\in \F_k(C)$ we can associate the {\em normal face} $N_FC\in \F_{d-k}(C^{\polar})$ defined as $N_FC=C^\polar\cap \lin(F)^\bot$. To ease notation we will sometimes use $F^{\diamond}=N_FC$ when the cone is clear. The resulting map $\F_k(C)\to\F_{d-k}(C^\polar)$ is a bijection, which satisfies $N_{F^\diamond}(C^\polar)=F$.
This relation is easily deduced from the mentioned involution property of the polarity map, cf.~Proposition~\ref{prop:polarpolar} below.
The polar operation is order reversing, $C\subseteq D$ implies $C^\polar\supseteq D^\polar$, as follows directly from the definition; more properties will be presented below.

Central to convex geometry and optimization are a variety of theorems of the alternative, the most prominent of which is known as Farkas' Lemma (among the countless references, see for example~\cite[Chapter 2]{Z:95}). All versions of Farkas' Lemma follow from a special case of the Hahn-Banach theorem, the separating hyperplane theorem. In what follows we need a conic version of this result.

\begin{theorem}[Separating hyperplane for cones]\label{thm:sephyp}
 Let $C,D\subset \R^d$ be non-empty, closed convex cones. Then $\relint(C)\cap \relint(D)=\emptyset$ if and only if there exists a linear hyperplane $H$, not containing both $C$ and $D$, such that $C\subseteq H_+$ and $D\subseteq H_-$, where $H_+,H_-$ denote the closed half-spaces defined by $H$.
\end{theorem}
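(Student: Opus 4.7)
The plan is to prove the two implications separately, with the nontrivial direction reducing to the standard proper separation theorem for convex sets and using the fact that every cone contains the origin to arrange for the separating hyperplane to be linear.

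For the easy direction, suppose such a linear hyperplane $H = \{\vct{x} : \ip{\vct{v}}{\vct{x}} = 0\}$ exists, with $C \subseteq H_+$, $D \subseteq H_-$, and $H$ not containing both cones; without loss of generality $C \not\subseteq H$. If some $\vct{x} \in \relint(C) \cap \relint(D)$ existed, the two containments $\ip{\vct{v}}{\vct{x}} \geq 0$ and $\ip{\vct{v}}{\vct{x}} \leq 0$ would force $\ip{\vct{v}}{\vct{x}} = 0$. I would then invoke the elementary fact that a linear functional which is non-negative on a convex set $C$ and vanishes at some relative interior point must vanish on the whole linear hull $\lin(C)$: since $\vct{x} \in \relint(C)$ is an interior point of $C$ inside $\lin(C)$, any $\vct{z} \in \lin(C)$ yields admissible perturbations $\vct{x} \pm \eps\vct{z} \in C$ for sufficiently small $\eps > 0$, and applying the non-negativity of $\ip{\vct{v}}{\cdot}$ at both perturbations forces $\ip{\vct{v}}{\vct{z}} = 0$. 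Hence $\lin(C) \subseteq H$, giving $C \subseteq H$ and the desired contradiction.

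For the forward direction, I would appeal to the general proper separation theorem for convex sets (the form of Hahn-Banach referred to in the paragraph preceding the statement): from $\relint(C) \cap \relint(D) = \emptyset$ one obtains a nonzero $\vct{v} \in \R^d$ and $\alpha \in \R$ satisfying $\ip{\vct{v}}{\vct{x}} \geq \alpha$ for all $\vct{x} \in C$, $\ip{\vct{v}}{\vct{x}} \leq \alpha$ for all $\vct{x} \in D$, and with $C$ and $D$ not both contained in the affine hyperplane $\{\ip{\vct{v}}{\cdot} = \alpha\}$. Since $\zerovct \in C \cap D$, substituting the origin into both inequalities gives simultaneously $\alpha \leq 0$ and $\alpha \geq 0$, hence $\alpha = 0$. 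Thus $H = \{\ip{\vct{v}}{\cdot} = 0\}$ is a linear hyperplane with $C \subseteq H_+$, $D \subseteq H_-$, and neither cone entirely contained in $H$, which is exactly the required conclusion.

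The main obstacle is the general proper separation theorem for convex sets, but the text already cites this as an available consequence of Hahn-Banach, so the essential content of the proof is the reduction from the affine to the linear setting via the homogeneity of cones. Everything else is a direct unpacking of the definitions of relative interior and of a cone.
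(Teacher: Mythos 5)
Your proof is correct and follows essentially the same route as the paper: reduce to the proper (affine) separation theorem for convex sets with disjoint relative interiors, then use the fact that the closed cones contain (points arbitrarily close to) the origin to force $\alpha=0$, so the separating hyperplane is linear --- the paper phrases this as $\zerovct\in H$, which is the same homogeneity argument, and your explicit check of the converse direction is likewise fine. One small wording slip in your last sentence: proper separation only guarantees that $H$ does not contain \emph{both} cones, not that neither is contained in $H$ (one of them may well lie inside $H$), but the weaker statement is all the theorem asks for, so the argument stands.
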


This theorem is usually stated for closed convex sets  and \textit{affine} hyperplanes $H$ (see, e.g.,~\cite[Theorem 11.3]{Rock}). Theorem~\ref{thm:sephyp} then follows from this more general version by noting that the relative interior of any non-empty, closed convex cone contains points arbitrary close to $\zerovct$, which implies $\zerovct\in H$.

The separating hyperplane theorem can be used to derive some interesting results involving the polar cone. The first such result states that polarity is an involution on the set of closed convex cones. We write $C^{\polar\polar}:=\left(C^{\polar}\right)^{\polar}$ for the polar of the polar.

\begin{proposition}\label{prop:polarpolar}
 Let $C$ be a non-empty, closed convex cone. Then $C^{\polar\polar} = C$.
\end{proposition}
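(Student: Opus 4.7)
The inclusion $C \subseteq C^{\polar\polar}$ is a direct unwinding of the definition: if $\vct{x} \in C$, then for every $\vct{y} \in C^{\polar}$ one has $\ip{\vct{x}}{\vct{y}} \leq 0$, and this is precisely the condition defining $C^{\polar\polar}$. The substance of the proposition therefore lies in the reverse inclusion $C^{\polar\polar} \subseteq C$, which I would prove by contrapositive: given $\vct{x} \notin C$, I exhibit a vector $\vct{y} \in C^{\polar}$ with $\ip{\vct{x}}{\vct{y}} > 0$, thereby witnessing $\vct{x} \notin C^{\polar\polar}$.

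My construction of $\vct{y}$ uses the closest-point projection. Since $C$ is non-empty and closed, the continuous coercive function $\vct{z} \mapsto \normsq{\vct{z} - \vct{x}}$ attains a minimum on $C$ at some point $\vct{x}_0 \in C$, and the hypothesis $\vct{x} \notin C$ forces $\vct{y} := \vct{x} - \vct{x}_0 \neq \vct{0}$. To verify that $\vct{y}$ has the required two properties, I exploit the cone structure of $C$ through two first-order conditions. First, for any $\vct{z} \in C$ the ray $\vct{x}_0 + t\vct{z}$, $t \geq 0$, lies in $C$ (convex cones are closed under addition and under nonnegative scaling), so $t = 0$ minimises $\normsq{\vct{x}_0 + t\vct{z} - \vct{x}}$ on $[0,\infty)$, and the first-order condition at $t=0$ yields $\ip{\vct{z}}{\vct{y}} \leq 0$; this shows $\vct{y} \in C^{\polar}$. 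Second, the ray $t\vct{x}_0 \in C$, $t \geq 0$, has $\normsq{t\vct{x}_0 - \vct{x}}$ minimised at $t = 1$, which gives the orthogonality $\ip{\vct{x}_0}{\vct{y}} = 0$. Combining the two identities, $\ip{\vct{x}}{\vct{y}} = \ip{\vct{x}_0 + \vct{y}}{\vct{y}} = \normsq{\vct{y}} > 0$.

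The main obstacle is really a subtlety explaining why one cannot simply invoke Theorem~\ref{thm:sephyp} directly on $C$ and $D := \cone(\vct{x})$: that theorem only guarantees weak separation, namely a vector $\vct{u}$ with $-\vct{u} \in C^{\polar}$ and $\ip{\vct{u}}{\vct{x}} \leq 0$. In the edge case $\ip{\vct{u}}{\vct{x}} = 0$ the clause "$H$ does not contain both $C$ and $D$" forces $C$ to cross $H = \ker\vct{u}$, but promoting this to strict separation appears to require an induction on the face $C \cap H$. The projection argument sidesteps this case analysis entirely, at the cost of appealing to the completeness of $\R^d$ for the existence of the minimiser $\vct{x}_0$.
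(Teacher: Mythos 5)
Your proof is correct, but it takes a genuinely different route from the paper. You build the separating functional by hand via the metric projection: $\vct{x}_0=\argmin_{\vct z\in C}\normsq{\vct z-\vct x}$ exists by closedness and coercivity, the two first-order conditions along the rays $\vct x_0+t\vct z$ and $t\vct x_0$ give $\vct y:=\vct x-\vct x_0\in C^\polar$ and $\ip{\vct x_0}{\vct y}=0$, and then $\ip{\vct x}{\vct y}=\normsq{\vct y}>0$ certifies $\vct x\notin C^{\polar\polar}$ -- all steps check out, including the closure of convex cones under addition and the interiority of $t=1$ needed for the equality $\ip{\vct x_0}{\vct y}=0$. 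The paper instead keeps Theorem~\ref{thm:sephyp} as the sole engine of duality: for $\vct x\in C^{\polar\polar}\setminus C$ it separates $C$ not from the ray through $\vct x$ but from the $\veps$-cone $B_\veps=\{\vct y\mid \ip{\vct x}{\vct y}\geq(1-\veps)\|\vct x\|\|\vct y\|\}$, whose relative interior is disjoint from $\relint(C)$ by closedness; separating from this full-dimensional cone forces the strict inequality $\ip{\vct x}{\vct h}>0$, which is exactly the device that resolves the edge case you flag in your last paragraph (no induction on faces is needed), and then $\vct h\in C^\polar$ together with $\ip{\vct x}{\vct h}>0$ contradicts $\vct x\in C^{\polar\polar}$. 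What your approach buys is self-containedness: the projection argument is essentially how one proves the separation theorem in the first place, so you do not need Theorem~\ref{thm:sephyp} at all, and your construction dovetails with the projection $\Proj_C$ and the Moreau decomposition~\eqref{eq:moreau} that the paper uses later anyway. What the paper's approach buys is economy of means within its own architecture: all duality statements (this proposition, Farkas' Lemma~\ref{le:farkas}) are derived uniformly from the one separation theorem, and no optimality conditions for the projection need to be established at this point.
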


\begin{proof}
Let $\vct{x}\in C$. Then, by definition of the polar, for all $\vct{y}\in C^{\polar}$ we have $\ip{\vct{x}}{\vct{y}}\leq 0$. This, in turn, implies that $\vct{x}\in C^{\polar\polar}$. Now let $\vct{x}\in C^{\polar\polar}$ and assume that $\vct{x}\not\in C$.
In particular, $\vct x\neq\vct0$, and by closedness of $C$ there exists $\veps>0$ such that the $\veps$-cone around $\vct x$, $B_\veps:=\{\vct y\mid \langle\vct x,\vct y\rangle \geq (1-\veps)\|\vct x\|\|\vct y\|\}$, satisfies $\relint(C)\cap \relint(B_\veps)=\emptyset$.
By Theorem~\ref{thm:sephyp}, there exists a hyperplane separating $C$ and $B_\veps$, and thus a non-zero $\vct{h}\in \R^d$ such that
\begin{equation*}
 \ip{\vct{x}}{\vct{h}}>0, \quad \forall \vct{y}\in C\mid \ip{\vct{h}}{\vct{y}}\leq 0.
\end{equation*} 
The first condition implies $\vct{h}\not\in C^{\polar}$, while the second one implies
 $\vct{h}\in C^{\polar}$. It follows that $\vct{x}\in C$.
\end{proof}

The following variation of Farkas' Lemma for convex cones, which is slightly more general than the usual one, is taken from~\cite{amelunxen2014gordon}. 

\begin{figure}[h!]
\begin{minipage}[c]{0.45\textwidth}
\centering
 \includegraphics[width=0.6\textwidth]{./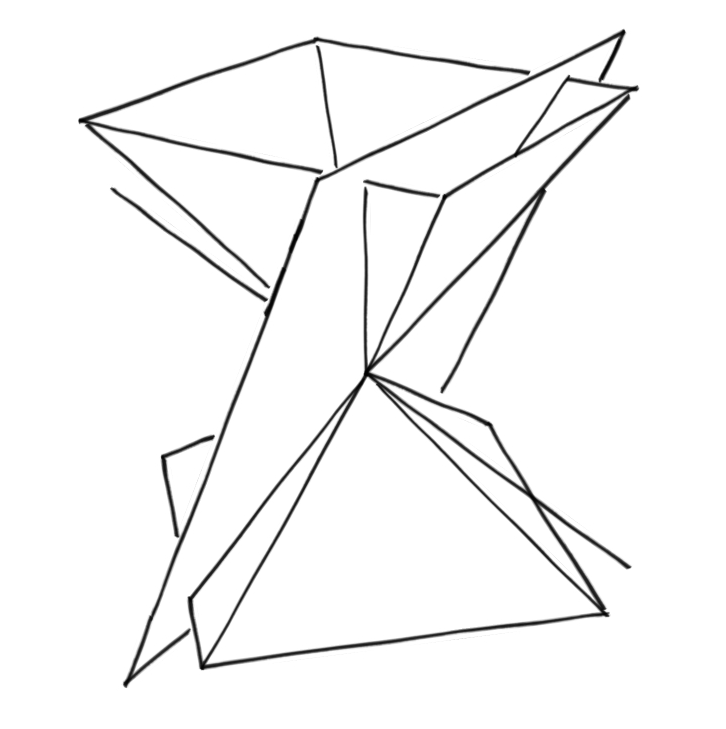}
 \end{minipage}
 \quad
 \begin{minipage}[c]{0.45\textwidth}
 \centering
 \includegraphics[width=0.7\textwidth]{./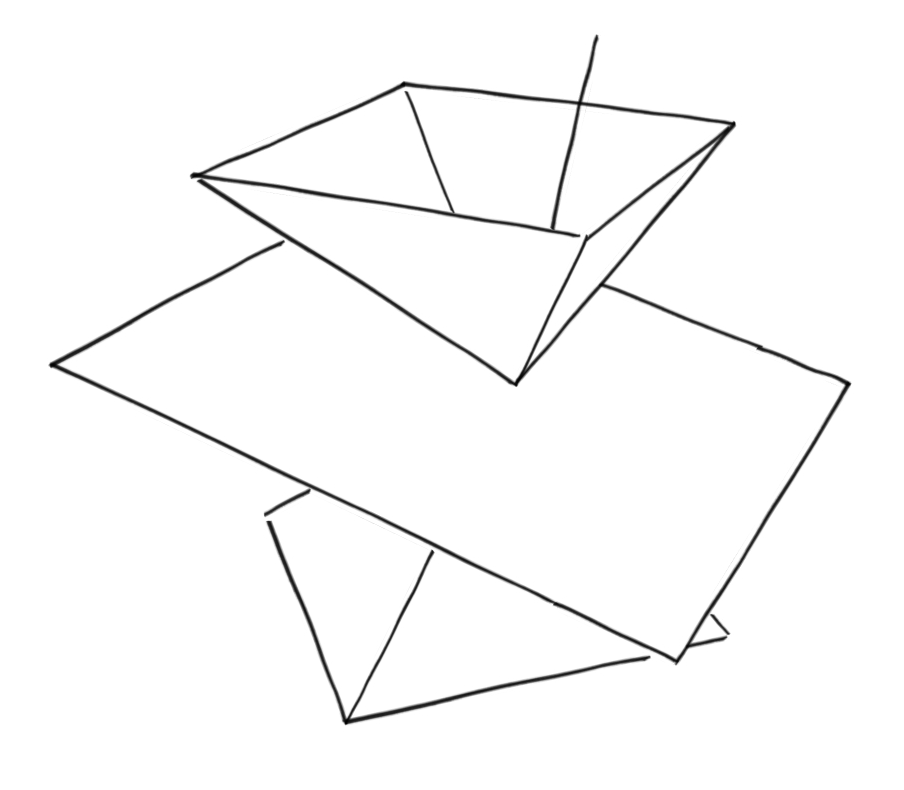}
 \end{minipage}
\caption{Either a subspace intersects $C$ non-trivially, or its complement intersects $C^{\circ}$.}
\label{fig:cone1}
\end{figure}

\begin{lemma}[Farkas]\label{le:farkas} Let $C,D$ be closed convex cones. Then
\begin{align}
\nonumber
 \relint(C)\cap D = \emptyset & \iff C^{\polar}\cap -D^{\polar} \neq \zerovct .
\intertext{In particular, if $D=L$ is a linear subspace, then}
\label{eq:farkas}
 \relint(C)\cap L = \emptyset & \iff C^{\polar}\cap L^{\perp} \neq \zerovct .
\end{align}
\end{lemma}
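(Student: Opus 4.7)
My plan is to derive both directions of the equivalence from the separating hyperplane theorem for cones (Theorem~\ref{thm:sephyp}). The specialization~\eqref{eq:farkas} then falls out by taking $D=L$ and noting that $L^\polar=L^\perp=-L^\perp$ for a linear subspace, so no extra work is needed there.

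For the forward direction, I would first observe that $\relint(D)\subseteq D$ converts the hypothesis $\relint(C)\cap D=\emptyset$ into $\relint(C)\cap\relint(D)=\emptyset$, so Theorem~\ref{thm:sephyp} produces a linear hyperplane $H$, not containing both $C$ and $D$, with $C\subseteq H_+$ and $D\subseteq H_-$. Writing $H_+=\{\vct x:\ip{\vct a}{\vct x}\geq 0\}$ for the non-zero normal $\vct a$, the vector $\vct h:=-\vct a$ lies in $C^\polar$ because $\ip{\vct h}{\vct c}\leq 0$ for every $\vct c\in C$, and in $-D^\polar$ because $\ip{-\vct h}{\vct d}=\ip{\vct a}{\vct d}\leq 0$ for every $\vct d\in D$. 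This exhibits the required non-zero element of $C^\polar\cap(-D^\polar)$.

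For the reverse direction, given a non-zero $\vct h\in C^\polar\cap(-D^\polar)$, I would consider the hyperplane $H=\vct h^\perp$; the two polarity conditions place $C\subseteq H_-$ and $D\subseteq H_+$. A hypothetical $\vct z\in\relint(C)\cap D$ would force $\ip{\vct h}{\vct z}=0$, and the relative-interior assumption then allows perturbation of $\vct z$ in every direction of $\lin(C)$ while staying in $C$, which upgrades this to $\vct h\perp\lin(C)$. Combined with the sign constraint $\ip{\vct h}{\vct d}\geq 0$ on $D$ and the non-degeneracy clause of Theorem~\ref{thm:sephyp}, I would extract the contradiction that completes the implication.

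The hard part will be exactly this last step. A raw non-zero $\vct h\in C^\polar\cap(-D^\polar)$ can a priori lie in $\lin(C)^\perp\cap\lin(D)^\perp$, in which case $\vct h^\perp$ contains both cones and no genuine separation is obtained. This is precisely the degeneracy that the ``not containing both'' clause of Theorem~\ref{thm:sephyp} rules out in the forward direction, and I would expect the proof either to import that clause symmetrically in the reverse direction or to re-produce the separation from $\vct h$ via a short Hahn--Banach style argument before concluding.
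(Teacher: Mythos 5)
Your forward direction is fine and is exactly the paper's argument: $\relint(C)\cap D=\emptyset$ gives $\relint(C)\cap\relint(D)=\emptyset$, Theorem~\ref{thm:sephyp} produces a separating hyperplane, and its normal is a nonzero element of $C^\polar\cap(-D^\polar)$. The genuine gap is the reverse implication, which you do not actually prove: you get as far as the (correct) observation that any $\vct z\in\relint(C)\cap D$ forces $\ip{\vct h}{\vct z}=0$ and hence $\vct h\in\lin(C)^\perp$, and then you hope to extract a contradiction from the sign constraint on $D$ together with the non-degeneracy clause of Theorem~\ref{thm:sephyp}. That clause is not available to you --- you are handed only the vector $\vct h$, not a properly separating hyperplane --- and the hypothesis $C^\polar\cap(-D^\polar)\neq\zerovct$ does nothing to exclude $\vct h\in\lin(C)^\perp$. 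In fact no patch can close this gap at the stated level of generality: for $C=D=\{(t,0)\mid t\geq 0\}\subseteq\R^2$ one has $\relint(C)\cap D\neq\emptyset$ while $(0,1)\in C^\polar\cap(-D^\polar)$; the same happens for this ray $C$ and $D=\R^2_{\geq0}$, where the troublesome $\vct h=(0,1)$ lies in $\lin(C)^\perp$ but not in $\lin(D)^\perp$, so the degeneracy is not even confined to the case $\vct h\in\lin(C)^\perp\cap\lin(D)^\perp$ that you single out. The reverse implication thus needs an extra hypothesis, the natural one being that $C$ is full-dimensional (no condition on $D$), or else one must switch to the symmetric statement that $\relint(C)\cap\relint(D)=\emptyset$ iff some nonzero $\vct h\in C^\polar\cap(-D^\polar)$ exists with $C,D$ not both contained in $\vct h^\perp$.

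For comparison, the paper proves the reverse direction by contraposition: given $\vct0\neq\vct x\in\relint(C)\cap D$, it asserts that $C^\polar\setminus\vct0$ lies in the open half-space $\{\vct h\mid\ip{\vct h}{\vct x}<0\}$ while $-D^\polar$ lies in the closed half-space $\{\vct h\mid\ip{\vct h}{\vct x}\geq0\}$, so that $C^\polar\cap(-D^\polar)=\vct0$. The strict inequality there is precisely the point your perturbation argument probes: it holds when $\lin(C)=\R^d$, so that $\relint(C)=\inter(C)$, but for lower-dimensional $C$ any nonzero $\vct h\in\lin(C)^\perp\subseteq C^\polar$ satisfies $\ip{\vct h}{\vct x}=0$, which is exactly the source of the counterexamples above. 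So your instinct about where the difficulty sits is sound --- indeed it flags a step the paper itself passes over --- but your plan of importing the non-degeneracy clause or re-running a Hahn--Banach argument cannot succeed as stated; you should either add the full-dimensionality assumption on $C$ for the ``$\Leftarrow$'' direction or prove the properly-separated, $\relint$-symmetric version. Note that in the paper's applications (e.g.\ the proof of Sommerville's theorem) the subspace is generic, so the degenerate configurations occur with probability zero and the conclusions drawn there are unaffected.
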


The situation in which $D=L$ is a hyperplane is best visualised as in Figure~\ref{fig:cone1}.

\begin{proof}
If $\relint(C)\cap D=\emptyset$, then by Theorem~\ref{thm:sephyp} there exists a separating hyperplane $H=\vct{h}^\bot$, $\vct h\neq\vct0$, such that $\langle \vct h,\vct x\rangle \leq 0$ for all $\vct x\in C$ and $\langle \vct h,\vct y\rangle \geq 0$ for all $\vct y\in D$. 
But this means $\vct h\in C^\polar\cap (-D^\polar)$.
On the other hand, if $\vct x\in \relint(C)\cap D$ then only in the case $C=\R^d$, for which the claim is trivial, can $\vct x=\vct0$ hold. If $\vct x\neq\vct0$, then~$C^\polar\setminus\vct0$ lies in the open half-space~$\{\vct h\mid \langle \vct h,\vct x\rangle<0\}$ and $-D^\polar$ lies in the closed half-space $\{\vct h\mid \langle \vct h,\vct x\rangle\geq0\}$, and thus $C^\polar\cap(-D^\polar)=\vct0$. The case $D=L$ follows immediately.
\end{proof}

In view of some of the later developments, it is important to understand the behaviour of duality under intersections. The following is a conic variant of~\cite[Corollary 23.8.1]{Rock} (see also~\cite[Chapter 7]{Z:95} for a similar theme).

\begin{proposition}\label{le:dualintersect}
The polar operation of intersection is the Minkowski sum,
\begin{equation*}
 (C\cap D)^{\polar}=C^{\polar}+D^{\polar}.
\end{equation*}
Moreover, every face of $C\cap D$ is of the form $F\cap G$ for some $F\in\mF(C),G\in\mF(D)$,
and the polar face satisfies
\begin{equation}\label{eq:N_(FcapG)(CcapD)supseteq...}
  N_{F\cap G}(C\cap D) \supseteq N_FC+N_GD. 
\end{equation}
If additionally $\relint(F)\cap\relint(G)\neq\emptyset$, then~\eqref{eq:N_(FcapG)(CcapD)supseteq...} holds with equality.
\end{proposition}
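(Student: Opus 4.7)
The first identity $(C\cap D)^\polar=C^\polar+D^\polar$ is best proved by invoking polarity twice. The inclusion $\supseteq$ is immediate from the definitions. For the converse I would compute $(C^\polar+D^\polar)^\polar$ directly: setting the $D^\polar$-summand to $\vct 0$ forces any element of this set into $C^{\polar\polar}=C$ by Proposition~\ref{prop:polarpolar}, and symmetrically into $D$, giving $(C^\polar+D^\polar)^\polar\subseteq C\cap D$; the reverse is clear. Taking polars once more yields the claim, using that the Minkowski sum of two polyhedral cones is itself polyhedral and hence closed so that the involution applies.

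For the face description, I would work with linear representations of faces. Any face $F$ of $C$ admits a description $F=\{\vct y\in C:\langle \vct u,\vct y\rangle=0\}$ for a suitable $\vct u\in\relint(N_FC)\subseteq C^\polar$ (just pick $\vct u$ in the relative interior of the normal face so that no other face of $C$ satisfies $\langle\vct u,\cdot\rangle=0$), and likewise $G=\{\vct y\in D:\langle \vct v,\vct y\rangle=0\}$ for some $\vct v\in D^\polar$. Since $\vct u+\vct v\in(C\cap D)^\polar$ by the first part and both functionals are non-positive on $C\cap D$, their sum vanishes at a point iff both summands do; this exhibits $F\cap G=\{\vct y\in C\cap D:\langle \vct u+\vct v,\vct y\rangle=0\}$ as a face of $C\cap D$. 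Conversely, given a face $E$ of $C\cap D$, pick $\vct x\in\relint(E)$ and let $F,G$ be the minimal faces of $C,D$ containing $\vct x$. Matching the tight half-space constraints of $C\cap D$ at $\vct x$ with those of $C$ and of $D$ individually then gives $E=F\cap G$.

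For the normal faces, the inclusion $N_{F\cap G}(C\cap D)\supseteq N_FC+N_GD$ is a direct check: if $\vct u\in N_FC$ and $\vct v\in N_GD$, then $\vct u+\vct v\in C^\polar+D^\polar=(C\cap D)^\polar$, and since $\lin(F\cap G)\subseteq\lin F\cap\lin G$, the sum annihilates $\lin(F\cap G)$. The main obstacle is the converse under the relative interior hypothesis. Fix $\vct x\in\relint(F)\cap\relint(G)$ and any $\vct w\in N_{F\cap G}(C\cap D)$. By the first part, $\vct w=\vct u+\vct v$ with $\vct u\in C^\polar$ and $\vct v\in D^\polar$; orthogonality of $\vct w$ to $\vct x\in\lin(F\cap G)$ combined with the sign conditions $\langle \vct u,\vct x\rangle,\langle \vct v,\vct x\rangle\leq 0$ forces both inner products to vanish. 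Because $\vct x$ lies in the relative interior of $F$, small perturbations $\vct x\pm\varepsilon\vct z$ along any $\vct z\in\lin(F)$ stay in $F\subseteq C$; applying $\vct u$ and exploiting both signs of $\varepsilon$ promotes the pointwise equality $\langle\vct u,\vct x\rangle=0$ to $\vct u\perp\lin(F)$, whence $\vct u\in N_FC$. An identical argument gives $\vct v\in N_GD$. This promotion from a single equality at $\vct x$ to orthogonality on the full span of $F$ is where the relative interior hypothesis does essential work, and is the only step where the two inclusions are genuinely asymmetric.
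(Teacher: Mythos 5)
Your proof is correct, and while the first identity is handled essentially as in the paper (compute the polar of $C^\polar+D^\polar$, then apply Proposition~\ref{prop:polarpolar}, using that the Minkowski sum of polyhedral cones is closed), the remaining parts take a genuinely different route. For the face description the paper stays inside the machinery just built: a face of $C\cap D$ is exposed by some $\vct h\in(C\cap D)^\polar$, which is split as $\vct h=\vct h_C+\vct h_D$ via the first claim, and these two functionals expose $F$ and $G$. You instead pass through the minimal faces of $C$ and $D$ containing a relative interior point of the given face and match tight constraints in an H-representation; this is valid but imports standard polyhedral facts (unique face with $\vct x$ in its relative interior, faces via tight inequalities, exposure by $\vct u\in\relint(N_FC)$) that the paper's splitting argument avoids. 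For the normal faces, your direct check of~\eqref{eq:N_(FcapG)(CcapD)supseteq...} and, in the equality case, your decomposition of $\vct w\in N_{F\cap G}(C\cap D)$ as $\vct u+\vct v$, forcing $\langle\vct u,\vct x\rangle=\langle\vct v,\vct x\rangle=0$ at a common point $\vct x\in\relint(F)\cap\relint(G)$ and upgrading to $\vct u\perp\lin(F)$, $\vct v\perp\lin(G)$ by two-sided perturbation, replace the paper's dual computation $(N_FC)^\polar=C+(-F)$ together with the inclusion $(C+(-F))\cap(D+(-G))\subseteq(C\cap D)+(-(F\cap G))$ proved by pushing far into the cone along $\lambda\vct x$ and then dualizing; your version is more hands-on and never leaves the primal side after the splitting, while the paper's is more uniform, deriving both the inclusion and the equality from one polarity computation.
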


\begin{proof}
For the first claim, note that
\begin{align*}
C\cap D = C^{\polar\polar}\cap D^{\polar\polar}&=\{\vct{z}\in \R^d\mid \forall (\vct{x},\vct{y})\in C^{\polar}\times D^{\polar},\ip{\vct{z}}{\vct{x}}\leq 0, \ip{\vct{z}}{\vct{y}}\leq 0\}\\
 &= \{\vct{z}\in \R^d\mid \forall (\vct{x},\vct{y})\in C^{\polar}\times D^{\polar}, \ip{\vct{z}}{\vct{x}+\vct{y}}\leq 0\}= (C^{\polar}+D^{\polar})^{\polar}.
\end{align*}
where in the first equality we used Proposition~\ref{prop:polarpolar}; the third equality is easily verified by noting that $\ip{\vct{z}}{\vct{x}+\vct{0}}=\ip{\vct{z}}{\vct{x}}$ and $\ip{\vct{z}}{\vct{0}+\vct{y}}=\ip{\vct{z}}{\vct{y}}$. The first claim then follows by polarity and another application of Proposition~\ref{prop:polarpolar}, noting that the Minkowski sum of polyhedral cones is a polyhedral cone.

For the second claim, note that a face $\bar F\in \mF(C\cap D)$ can be written as $\bar F = \{\vct x\in C\cap D\mid \langle\vct x,\vct{h}\rangle=0\}$ for some $\vct{h}\in (C\cap D)^\polar$. By the first claim, we can write the normal vector in the form $\vct{h}=\vct{h}_C+\vct{h}_D$ with $\vct{h}_C\in C^\polar$ and $\vct{h}_D\in D^\polar$. Denoting $F:=\{\vct x\in C\mid \langle\vct x,\vct{h}_C\rangle=0\}\in\mF(C)$, $G:=\{\vct y\in D\mid \langle\vct y,\vct{h}_D\rangle=0\}\in\mF(D)$, we obtain
\begin{align*}
   F\cap G & = \{\vct x\in C\cap D\mid \langle\vct x,\vct{h}_C\rangle=\langle\vct x,\vct{h}_D\rangle=0\}
= \{\vct x\in C\cap D\mid \langle\vct x,\vct{h}\rangle=0\} = \bar F ,
\end{align*}
where the second equality follows from the fact that $\langle\vct x,\vct{h}_C\rangle\leq 0$ and $\langle\vct x,\vct{h}_D\rangle\leq 0$ if $\vct x\in C\cap D$.

Finally, for the claim about the polar face, note that, by what we have just shown and using double polarity, 
\begin{equation*}
 (N_FC)^\polar=(C^\polar\cap \lin(F)^\bot)^\polar=C+\lin(F)=C+(-F),
\end{equation*}
so that
\begin{align}\label{eq:(N_(FcapG)(CcapD))polar=...}
  (N_{F\cap G}(C\cap D))^\polar = (C\cap D)+(-(F\cap G)) & \subseteq (C+(-F))\cap (D+(-G))
\\ & = (N_FC)^\polar\cap(N_GD)^\polar = (N_FC+N_GD)^\polar .
\nonumber
\end{align}
The claim~\eqref{eq:N_(FcapG)(CcapD)supseteq...} follows by invoking polarity again.

To show that the inclusion in the above display is an equality if $\relint(F)\cap \relint(G)\neq \emptyset$, note first that if $\vct x\in \relint(F)$, then for every $\vct y\in C+(-F)$ we have $\vct y+\lambda\vct x\in C$ for $\lambda>0$ large enough. Indeed, if $\vct y=\vct y_C-\vct y_F$ with $\vct y_C\in C$, $\vct y_F\in F$, then $\vct y+\lambda\vct x = \vct y_C+\lambda(\vct x-\frac{1}{\lambda}\vct y_F)$, and $\vct x-\frac{1}{\lambda}\vct y_F\in F$ for $\lambda>0$ large enough. Now, if $\vct x\in \relint(F)\cap \relint(G)$ and $\vct y\in (C+(-F))\cap (D+(-G))$, then for $\lambda>0$ large enough, $\vct y+\lambda\vct x\in C\cap D$. Hence,
  \[ \vct y = (\vct y+\lambda\vct x) + (-\lambda\vct x) \in (C\cap D) + (-(F\cap G)) , \]
which shows that~\eqref{eq:(N_(FcapG)(CcapD))polar=...}, and thus~\eqref{eq:N_(FcapG)(CcapD)supseteq...}, hold with equality.
\end{proof}

Two faces $F\in \F(C)$ and $G\in \F(D)$ are said to intersect transversely, written $F\pitchfork G$, if their relative interiors have a non-empty intersection, $\relint(F)\cap \relint(G)\neq \emptyset$, and $\dim F\cap G=\dim F+\dim G-d$.

\begin{corollary}\label{cor:dualintersect}
Let $C,D$ be cones and $F\in \F(C)$, $G\in \F(D)$ be faces that intersect transversely. Then $N_FC+N_GD=N_{F\cap G}(C\cap D)$, and is a face of $C^{\polar}+D^{\polar}$ of dimension $(d-\dim F)+(d-\dim G)$.
\end{corollary}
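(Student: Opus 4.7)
The plan is to read the corollary as a direct specialisation of Proposition~\ref{le:dualintersect} under the transversality hypothesis, together with the standard dimension count for normal faces. Transversality packages two pieces of data: a relative interior condition $\relint(F)\cap\relint(G)\neq\emptyset$, and a dimension identity $\dim(F\cap G)=\dim F+\dim G-d$. The first feeds the equality clause of Proposition~\ref{le:dualintersect}; the second feeds the dimension computation.

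First I would invoke Proposition~\ref{le:dualintersect}: since $\relint(F)\cap \relint(G)\neq\emptyset$, the inclusion~\eqref{eq:N_(FcapG)(CcapD)supseteq...} becomes an equality, yielding $N_FC+N_GD = N_{F\cap G}(C\cap D)$ at once. Moreover $F\cap G$ is itself a face of $C\cap D$, so its normal face $N_{F\cap G}(C\cap D)$ is by definition a face of $(C\cap D)^\polar$. Applying the first statement of Proposition~\ref{le:dualintersect}, $(C\cap D)^\polar = C^\polar+D^\polar$, so $N_FC+N_GD$ is a face of $C^\polar+D^\polar$.

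For the dimension, recall from the discussion in Section~\ref{sec:duality} that for any cone $E\subseteq\R^d$ and face $H\in\F_k(E)$ the normal face $N_HE$ belongs to $\F_{d-k}(E^\polar)$. Applying this to $F\cap G\in \F(C\cap D)$ and using the transversal dimension identity gives
\[
  \dim\bigl(N_{F\cap G}(C\cap D)\bigr)
  = d - \dim(F\cap G)
  = d - (\dim F+\dim G - d)
  = (d-\dim F)+(d-\dim G),
\]
which is the claimed dimension. There is no real obstacle: once the equality clause of Proposition~\ref{le:dualintersect} is in place, the corollary is just a bookkeeping exercise combining that equality with the definition of transversality and the dimension formula for normal faces.
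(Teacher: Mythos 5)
Your proposal is correct and follows essentially the same route the paper intends: the equality case of Proposition~\ref{le:dualintersect} (available because $\relint(F)\cap\relint(G)\neq\emptyset$) gives $N_FC+N_GD=N_{F\cap G}(C\cap D)$, the identity $(C\cap D)^\polar=C^\polar+D^\polar$ makes it a face of $C^\polar+D^\polar$, and the dimension follows from the normal-face dimension formula together with $\dim(F\cap G)=\dim F+\dim G-d$. No gaps.
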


For a polyhedral cone $C\subseteq\R^d$, denote by $\Proj_C$ the Euclidean projection,
\begin{equation}\label{eq:orthodecom}
 \Proj_C(\vct{x}) = \argmin \{\norm{\vct{x}-\vct{y}}^2 \mid \vct{y}\in C\} .
\end{equation}
The Moreau decomposition of a point $\vct{x}\in \R^d$ is the sum representation
\begin{equation}\label{eq:moreau}
 \vct{x} = \Proj_C(\vct{x})+\Proj_{C^{\circ}}(\vct{x}),
\end{equation}
where $\Proj_C(\vct{x})$ and $\Proj_{C^{\circ}}(\vct{x})$ are orthogonal. 
A direct consequence is the disjoint decomposition 
\begin{equation}\label{eq:conedecomp}
 \R^d = \bigcup_{F\in \mathcal{F}(C)} (\relint(F)+N_FC),
\end{equation}
see also~\cite[Lemma 3]{mcmullen1975non}.

\subsection{Intrinsic volumes}\label{sec:intr-vols}
For $C\subseteq\R^d$ a polyhedral cone and for two faces $F,G\in \F(C)$, define
\begin{equation*}
 v_F(G) = \Prob\{\Proj_G(\vct{g})\in \relint F\},
\end{equation*}
where $\vct{g}\sim\mN(\R^d)$ is a standard Gaussian vector in~$\R^d$.  If $F\subseteq G$, it follows from~\eqref{eq:conedecomp} that
\begin{equation*}
  v_{F}(G) = \Prob\{\vct g\in F+N_FG\}.
\end{equation*}
On the other hand, since the relative interiors of faces of $C$ are disjoint,
we have $v_F(G)=0$ if $F\not\subseteq G$. For the most part we will consider the case $G=C$. Define the $k$-th intrinsic volumes of $C$, $0\leq k\leq d$, to be 
\begin{equation*}
 v_k(C)=\sum_{F\in \F_k(C)} v_F(C).
\end{equation*}
For a fixed cone, the intrinsic volumes form a probability distribution on~$\{0,1,\ldots,d\}$.
Note that if $F\in\mF_k(C)$ then,
by the decomposition~\eqref{eq:moreau},
  \[ v_F(C) = v_k(F)\,v_{d-k}(N_FC) . \]
For later reference, we note that in combination with Corollary~\ref{cor:dualintersect}, we get for cones $C,D$ and faces $F\in \F_k(C)$, $G\in \F_{\ell}(D)$ that intersect transversely, with $j=k+\ell-d$,
\begin{equation}\label{eq:intvolintersect}
 v_{F\cap G}(C\cap D) = v_j(F\cap G)\,v_{d-j}(N_FC+N_GD) .
\end{equation}

\begin{example}
 Let $C=L\subseteq V$ be a linear subspace of dimension $i$. Then
 \begin{equation*}
  v_k(C) = \begin{cases}
            1 & \text{ if } k=i,\\
            0 & \text{ if } k\neq i.
           \end{cases}
 \end{equation*}
\end{example}

\begin{example}
 Let $C=\R^d_{\geq 0}$ be the non-negative orthant, i.e., the cone consisting of points with non-negative coordinates. A vector $\vct{x}$ projects orthogonally to a $k$-dimensional face of $C$ if and only if exactly $k$ coordinates are non-positive. By symmetry considerations and the invariance of the Gaussian distribution under permutations of the coordinates, it follows that
 \begin{equation*}
  v_k(\R^d_{\geq 0}) = \binom{d}{k}2^{-d}.
 \end{equation*}
\end{example}

The following important properties of the intrinsic volumes, which are easily verified in the setting of polyhedral cones, will be used frequently:
\begin{itemize}
 \item[(a)] {\bf Orthogonal invariance.} For an orthogonal transformation $\vct Q\in O(d)$,
 \begin{equation*}
  v_k(\vct QC) = v_k(C);
 \end{equation*}
 \item[(b)] {\bf Polarity.} 
 \begin{equation*}
  v_k(C) = v_{d-k}(C^{\polar});
 \end{equation*}
 \item[(c)] {\bf Product rule.} 
 \begin{equation}\label{eq:productrule}
  v_k(C\times D) = \sum_{i+j=k} v_i(C)v_j(D).
 \end{equation}
\end{itemize}

Note that the product rule implies $v_i(C\times L)=v_{i-k}(C)$ if $i\geq k$ and $L$ is a subspace of dimension $k$.
We will sometimes be working with the intrinsic volume generating polynomial,
\begin{equation*}
 P_C(t) = \sum_{k=0}^d v_k(C) t^k.
\end{equation*}
The product rule then states that the generating polynomial is multiplicative with respect to direct products. A direct consequence of the orthogonal invariance and the polarity rule is that the intrinsic volume sequence is symmetric for self-dual cones (i.e., cones such that $C=-C^{\polar}$).

An important summary parameter is the expected value of the distribution associated to the intrinsic volumes, the {\em statistical dimension}, which coincides with the expected squared norm of the projection of a Gaussian vector on the cone,
\begin{equation*}
 \delta(C) = \sum_{k=0}^d kv_k(C) = \Expect\big[\norm{\Proj_C(\vct{g})}^2\big] .
\end{equation*}
The statistical dimension reduces to the usual dimension for linear subspaces. The coincidence of the two expected values is a special case of the generalized Steiner formula~\ref{thm:gensteiner}, and is crucial in applications of the statistical dimension. More on the statistical dimension and its properties and applications can be found in~\cite{edge,mccoy2014steiner,goldstein2014gaussian}.

\subsection{Angles}\label{sec:angles}
In the classical works on polyhedral cones, intrinsic volumes were viewed as polytope angles, see \cite{feldman2009angles} for some perspective. 
Polyhedral cones arise as tangent or normal cones of polyhedra $K\subseteq \R^d$. Given such a polyhedron $K$ and a face $F\subseteq K$, with $\vct{x}_0\in \relint(F)$, the {\em tangent cone} $T_FK$ is defined as
\begin{equation*}
 T_FK = \bigcup_{\tau>0} \{\vct{v} \in \R^d \mid \vct{x}_0+\tau\vct{v}\in K\}.
\end{equation*}
The {\em normal cone} to $K$ at $F$ is the polar of the tangent cone.
To clarify the relations to the terminology used in this paper and to facilitate a translation of the results of some of the referenced papers, we provide the following list.

\subsubsection{Solid angle}

When speaking about the solid angle of a cone $C\subseteq\R^d$, sometimes denoted $\alpha(C)$, one usually assumes that $C$ has nonempty interior, and one defines $\alpha(C)$ as the Gaussian volume of $C$ (or equivalently, the relative spherical volume of $C\cap S^{d-1}$, where $S^{d-1}$ is the $(d-1)$-dimensional unit sphere); we extend this definition to also cover lower-dimensional cones, and define for $\dim C=k$,
\[ \alpha(C) := v_C(C) = v_k(C) = v_{d-k}(C^\polar). \]

\subsubsection{Internal/external angle}\label{sub:extangle}

The internal and external angle of a polyhedral set $K\subseteq\R^d$ at a face $F$ are defined as the solid angle of the tangent and normal cone of $K$ at $F$, respectively,
\begin{align*}
   \beta(F,K) & = \alpha(T_FK) , & \gamma(F,K) & = \alpha(N_FK) .
\end{align*}
Note that we have $v_F(C) = \beta(\vct0,F)\gamma(F,C)$.
Furthermore, conic polarity swaps between internal and external angles:
\begin{align*}
   \beta(F,C) & = \gamma(F^\diamond,C^\polar) , & \gamma(F,C) & = \beta(F^\diamond,C^\polar) ,
\end{align*}
where we use the notation $F^\diamond:=N_FC$ for the face of $C^\polar$, which is polar to the face $F$ of $C$. This shows that any formula involving the internal and external angles of a cone~$C$ has a polar version in terms of the internal and external angles of~$C^\polar$ where the roles of internal and external have been exchanged. (Some of the formulas in~\cite{mcmullen1975non} are stated in this polar version.)

\begin{remark}
The Brianchon-Gram-Euler relation~\cite[Thm.~(1)]{PS:67} of a convex polytope $K$ translates in the above notation as
  \[ \sum_{F\in\F(K)} (-1)^{\dim F} \beta(F,K) = 0 . \]
Replacing the bounded polytope by an unbounded cone makes this relation invalid. However, there exists a closely related conic version, which is called Sommerville's Theorem~\cite[Thm.~(37)]{PS:67}. This in turn can be used to derive a Gauss-Bonnet relation, cf.~Section~\ref{sec:Gauss-Bonnet}.
\end{remark}

\subsubsection{Grassmann angle}\label{sec:grassm-ang}
The Grassmann angles of a cone $C$, which have been introduced and analyzed by Gr\"unbaum~\cite{G:68}, are defined through the probability that a uniformly random linear subspace of a specific (co)dimension intersects the cone nontrivially. The kinematic/Crofton formulae express this probability in terms of the intrinsic volumes, cf.~Section~\ref{sec:kin-form}. More precisely, we have
\begin{equation}\label{eq:Croft-subsp}
  \Prob\{C\cap L_k\neq\vct0\} = 2\sum_{i\geq1 \text{ odd}} v_{k+i}(C) =: 2h_{k+1}(C) ,
\end{equation}
where $L_k\subseteq\R^d$ denotes a uniformly random linear subspace of codimension~$k$. Notice that when considering the intrinsic volumes and the Grassmann angles as vectors, $(v_0,v_1,\ldots,v_d)$ and $(h_0,h_1,\ldots,h_d)$, then these are related through a nonsingular linear transformation. Hence, any formula in the intrinsic volumes of a cone has an equivalent form in terms of Grassmann angles and vice versa; in this paper we prefer the intrinsic volume versions.

\begin{remark}
The preference of intrinsic volumes over Grassmann angles has an odd effect on the logic behind Corollary~\ref{thm:1} below, which is attributed to Gr\"unbaum. This result is originally stated and proved in~\cite[Thm.~2.8]{G:68} in terms of the Grassmann angles. So in order to rewrite Corollary~\ref{thm:1} in its original form, one needs to apply Crofton's formula~\eqref{eq:Croft-subsp} whose proof, given in Section~\ref{sec:kin-form}, uses Gauss-Bonnet~\eqref{cor:GB}, which in turn is a direct consequence of Corollary~\ref{thm:1}. The resulting proof of the original result~\cite[Thm.~2.8]{G:68} (in terms of Grassmann angles) is thus much less direct than the original one given by Gr\"unbaum.
\end{remark}

\subsection{Some poset techniques}\label{sec:posets}

In this section we recall some notions from the theory of partially ordered sets (posets) that we will need in Section~\ref{sec:Kl-Sw}. We only recall those properties that we will directly use, see~\cite[Ch.~3]{EC} for more details and context.

A lattice is a poset with the property that any two elements have both a least upper bound and a greatest lower bound. We will only consider finite lattices; in particular, for these lattices the greatest and the least elements $\hat1,\hat0$ both exist. More precisely, we will consider the following two (types of) finite lattices.

\begin{example}[Face lattice]\label{ex:face-latt}
Let $C\subseteq\R^d$ be a polyhedral cone. Then the set of faces $\F(C)$ with partial order given by inclusion is a finite lattice. The elements $\hat1,\hat0$ are given by $\hat1=C$ and $\hat0=C\cap (-C)$.
\end{example}

\begin{example}[Intersection lattice of a hyperplane arrangement]\label{ex:inters-latt}
Let $\A=\{H_1,\ldots,H_n\}$ be a set of (linear) hyperplanes $H_i\subset\R^d$, $i=1,\ldots,n$. The set of all intersections $\mL(\A)=\big\{\bigcap_{i\in I} H_i\mid I\subseteq\{1,\ldots,n\}\big\}$, endowed with the partial order given by reverse
inclusion, is called the intersection lattice of the hyperplane arrangement~$\A$. This lattice has a disjoint decomposition into $\mL_0(\A),\dots,\mL_d(\A)$, where $\mL_j(\A)=\{L\in \mL(\A)\mid \dim L=j\}$. The minimal and maximal elements are given by $\hat0=\R^d$ and $\hat1=\bigcap_{i=1}^n H_i$.
\end{example}

One can define the (real) incidence algebra of a (locally) finite poset $(P,\preceq)$ as the set of all functions $\xi\colon P\times P\to\R$, which besides having the usual vector space structure also possesses the multiplication
\begin{align*}
   \xi \nu & \colon P\times P\to \R , & \xi \nu(x,y) = \sum_{x\preceq z\preceq y} \xi(x,z)\,\nu(z,y) 
\end{align*}
defined for two functions $\xi,\nu\colon P\times P\to \R$.
The identity element in this algebra is the Kronecker delta, $\delta(x,y)=1$ if $x=y$ and $\delta(x,y)=0$ else. Another important element is the characteristic function of the partial order, $\zeta(x,y)=1$ if $x\preceq y$ and $\zeta(x,y)=0$ else. This function is invertible, and its inverse~$\mu$, called \emph{M\"obius function} on~$P$, can be recursively defined by $\mu(x,y)=0$ if $x\not\preceq y$, and
\begin{align}\label{eq:def-moeb}
   \mu(x,x) & = 1 , & \mu(x,y) & = -\sum_{x\preceq z\prec y} \mu(x,z) \quad \text{if } x\prec y .
\end{align}
The incidence algebra acts on the set of functions $f\colon P\to \R$ on the right by
\begin{equation*}
 (f\xi)(y) = \sum_{x\preceq y} f(x)\xi(x,y).
\end{equation*}
The M\"obius inversion is the simple fact that for two functions $f,g\colon P\to \R$ one has $f\zeta=g$ if and only if $f=g\mu$. Explicitly, this equivalence can be written out as follows:
\begin{equation}\label{eq:moeb-inv}
   \forall y\in P: g(y)=\sum_{x\preceq y} f(x) \quad\iff\quad \forall y\in P: f(y)=\sum_{x\preceq y} g(x)\mu(x,y) .
\end{equation}
The M\"obius function of the face lattice from Example~\ref{ex:face-latt} is given by $\mu(F,G)=(-1)^{\dim G-\dim F}$. For a whole range of techniques for computing M\"obius functions we refer to~\cite{EC,ardila2014algebraic}.

\subsubsection{Some elementary facts about hyperplane arrangements}

The last concept we need to introduce is that of a characteristic polynomial, which can be defined for any finite graded lattice; we only introduce the characteristic polynomial for hyperplane arrangements, as we will only use it in this context. We use the notation from Example~\ref{ex:inters-latt}. The characteristic polynomial of a hyperplane arrangement~$\A$ in~$\R^d$ is defined as~\cite[Sec.~3.11.2]{EC}
\begin{equation*}
 \chi_\A(t) = \sum_{L\in \mL(\A)}\mu(\R^d,L)t^{\dim L} .
\end{equation*}
More generally, we introduce the $j$th-level characteristic polynomial of~$\A$ as follows,
\begin{equation}\label{eq:def-chi_(A,k)(t)}
 \chi_{\A,j}(t) = \sum_{\tilde L\in \mL_j(\A)}\,\sum_{L\in \mL(\A)}\mu(\tilde L,L)t^{\dim L} ,
\end{equation}
so that $\chi_\A=\chi_{\A,d}$, and we also introduce the bivariate polynomial\footnote{This bivariate polynomial (or simple transformations thereof) is also known as M\"obius polynomial~\cite{Z:75} or Whitney polynomial~\cite{A:thesis,A:96}; it should not be confused with the coboundary/Tutte polynomial~\cite{J:12}.}
\begin{equation}\label{eq:biv-char}
  X_\A(s,t) := \sum_{j=0}^d s^j \chi_{\A,j}(t) = \sum_{\tilde L,L\in \mL(\A)}\mu(\tilde L,L)s^{\dim \tilde L} t^{\dim L} .
\end{equation}
The $j$th level characteristic polynomial can be written in terms of characteristic polynomials by considering restrictions of~$\A$: If $L\subseteq\R^d$ is a linear subspace, then the arrangement $\A^L=\{H\cap L\mid H\in\A, L\not\subseteq H\}$ is a hyperplane arrangement relative to~$L$. It is easily seen that we obtain
\begin{equation}\label{eq:expre-chi_(A,k)-chi_A}
  \chi_{\A,j}(t) = \sum_{L\in \mL_j(\A)}\,\chi_{\A^L}(t) .
\end{equation}
The M\"obius function of the intersection lattice alternates in sign~\cite[Prop.3.10.1]{EC}, and so do the coefficients of the ($j$th-level) characteristic polynomial. Note that $\chi_{\A,j}(t)$ (is either zero or) has degree~$j$ and the leading coefficient is given by~$|\mL_j(\A)|=:\ell_j(\A)$. For future reference we also note that in the cases $j=0,1$ we have
\begin{align}\label{eq:chi_(A,0/1)(t)}
   \chi_{\A,0}(t) & = \ell_0(\A), & \chi_{\A,1}(t) & = \ell_1(\A) (t-\ell_0(\A)) .
\end{align}

The complement of the hyperplanes of an arrangement $\A$, $\R^d\setminus \bigcup_{H\in \A} H$, decomposes into open convex cones. We denote by $\mR(\A)$ the set of polyhedral cones given by the closures of these regions, and we denote $r(\A):=|\mR(\A)|$. More generally, we define
\begin{align}\label{eq:def-R_k(A)}
   \mR_j(\A) & = \bigcup_{C\in \mR(\A)} \F_j(C) , & r_j(\A) & = |\mR_j(\A)| ,
\end{align}
so that $\mR(\A)=\mR_d(\A)$ and $r(\A)=r_d(\A)$.
The following theorem by Zaslavsky~\cite{Z:75}
lies at the heart of the result by Klivans and Swartz~\cite{klivans2011projection} that we will present in Section~\ref{sec:Kl-Sw}.

\begin{theorem}[Zaslavsky]\label{thm:zas1}
Let $\A$ be an arrangement of linear hyperplanes in $\R^d$. Then 
\begin{equation*}
  r_j(\A) = (-1)^j\,\chi_{\A,j}(-1) .
\end{equation*}
\end{theorem}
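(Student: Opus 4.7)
The strategy is to reduce the statement to the top-dimensional (classical) Zaslavsky theorem via the restriction identity \eqref{eq:expre-chi_(A,k)-chi_A}, and then to prove that case by deletion--restriction induction on $|\A|$.

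\textbf{Reduction to the top-dimensional case.} For any $j$-face $F$ of any region $C\in\mR(\A)$, I first observe that $\lin(F)$ is the intersection of all hyperplanes of $\A$ containing $F$, hence $\lin(F)\in\mL_j(\A)$. For a fixed $L\in\mL_j(\A)$, I would establish a bijection between $\{F\in\mR_j(\A):\lin(F)=L\}$ and $\mR(\A^L)$. In the forward direction, given such an $F$, any $x\in\relint(F)$ lies in $L$ and on no hyperplane of $\A$ not containing $L$, so it sits in a unique component of $L\setminus\bigcup\A^L$, whose closure is a region of $\A^L$. Conversely, given $R\in\mR(\A^L)$, pick $x\in\relint(R)$ and a small perturbation $x'\notin\bigcup\A$ transverse to $L$; $x'$ determines a unique region $C\in\mR(\A)$, and $C\cap L$ is the desired $F$, which equals $R$ because the side conditions of hyperplanes containing $L$ are automatically satisfied on $L$. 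This yields
\[
  r_j(\A)\;=\;\sum_{L\in\mL_j(\A)} r(\A^L),
\]
and combined with \eqref{eq:expre-chi_(A,k)-chi_A} reduces the theorem to proving $r(\mathcal{B})=(-1)^k\chi_\mathcal{B}(-1)$ for every linear arrangement $\mathcal{B}\subset\R^k$, applied to each $\mathcal{B}=\A^L$ with $k=j$.

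\textbf{Top-dimensional case.} I would induct on $|\mathcal{B}|$. For $\mathcal{B}=\emptyset$ one has $r=1$ and $\chi_\emptyset(t)=t^k$, so both sides equal $1$. For the inductive step, fix $H\in\mathcal{B}$ and set $\mathcal{B}':=\mathcal{B}\setminus\{H\}$ (deletion) and $\mathcal{B}'':=\mathcal{B}^H$ (restriction). The region count satisfies $r(\mathcal{B})=r(\mathcal{B}')+r(\mathcal{B}'')$ by a direct geometric argument: adding $H$ back splits precisely those regions of $\mathcal{B}'$ whose interior meets $H$, and those new regions correspond bijectively (via intersection with $H$) to the regions of $\mathcal{B}''$. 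The matching polynomial identity $\chi_\mathcal{B}(t)=\chi_{\mathcal{B}'}(t)-\chi_{\mathcal{B}''}(t)$ is obtained by splitting the defining sum for $\chi_\mathcal{B}$ according to whether $L\in\mL(\mathcal{B})$ satisfies $L\subseteq H$ or not: the subsum over $L\not\subseteq H$ matches $\chi_{\mathcal{B}'}(t)$ since those $L$ coincide with $\mL(\mathcal{B}')$ (with unchanged Möbius values, by the recursion \eqref{eq:def-moeb} applied to both lattices), whereas the subsum over $L\subseteq H$ collapses to $-\chi_{\mathcal{B}''}(t)$ by the same recursion. Evaluating at $t=-1$ and invoking the induction hypothesis on $\mathcal{B}'$ in $\R^k$ and on $\mathcal{B}''$ in $H\cong\R^{k-1}$ (the dimension drop providing the sign flip) closes the induction.

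\textbf{Main obstacle.} The critical technical step is the polynomial deletion--restriction identity, which requires carefully tracking Möbius values across the three intersection lattices $\mL(\mathcal{B})$, $\mL(\mathcal{B}')$, and $\mL(\mathcal{B}'')$; in particular, one must verify that Möbius values on the ``upper'' portion of $\mL(\mathcal{B})$ lying in $H$ reassemble, after the sign flip coming from removing $H$ from the chains, into the Möbius values of $\mL(\mathcal{B}'')$. The geometric reduction to the top-dimensional case is comparatively routine once the bijection between $j$-faces of linear span $L$ and regions of $\A^L$ is verified.
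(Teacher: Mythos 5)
The paper itself gives no proof of Theorem~\ref{thm:zas1}: it is imported from Zaslavsky's work \cite{Z:75}, so there is nothing internal to compare against, and your proposal has to stand on its own. Its architecture is reasonable and standard: the bijection between $j$-faces with linear span $L$ and regions of $\A^L$ (hence $r_j(\A)=\sum_{L\in\mL_j(\A)} r(\A^L)$), combined with \eqref{eq:expre-chi_(A,k)-chi_A}, correctly reduces everything to the classical case $j=d$; the region recursion $r(\mathcal{B})=r(\mathcal{B}')+r(\mathcal{B}'')$ and the sign bookkeeping in the induction are fine.

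The genuine gap is in your justification of the deletion--restriction identity $\chi_{\mathcal{B}}(t)=\chi_{\mathcal{B}'}(t)-\chi_{\mathcal{B}''}(t)$, which you yourself single out as the critical step. Your claimed term-by-term matching is false: the set $\{L\in\mL(\mathcal{B})\mid L\not\subseteq H\}$ does \emph{not} coincide with $\mL(\mathcal{B}')$; it is a proper subset whenever some intersection of hyperplanes of $\mathcal{B}'$ happens to lie inside $H$. Take three distinct lines $H_1,H_2,H$ through the origin in $\R^2$, $\mathcal{B}'=\{H_1,H_2\}$: then $\vct 0\in\mL(\mathcal{B}')$ and $\vct 0\subset H$, so the subsum of $\chi_{\mathcal{B}}$ over $L\not\subseteq H$ is $t^2-2t$, not $\chi_{\mathcal{B}'}(t)=t^2-2t+1$, and the subsum over $L\subseteq H$ is $-t+2$, not $-\chi_{\mathcal{B}''}(t)=-(t-1)$; the two discrepancies cancel globally, which is precisely why the identity is true but cannot be established by the matching you describe. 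What the recursion \eqref{eq:def-moeb} gives immediately is only the part you got right: for $L\not\subseteq H$ the intervals $[\R^k,L]$ in $\mL(\mathcal{B})$ and $\mL(\mathcal{B}')$ are identical, so those M\"obius values agree. The remaining content is the nontrivial identity $\mu_{\mathcal{B}}(\R^k,L)=\mu_{\mathcal{B}'}(\R^k,L)-\mu_{\mathcal{B}''}(H,L)$ for $L\subseteq H$ (with the first term present only when $L\in\mL(\mathcal{B}')$), and this does not ``collapse by the same recursion''. To close the gap, prove deletion--restriction properly, e.g.\ via Whitney's theorem (already invoked in the proof of Lemma~\ref{lem:1}), splitting subsets according to whether they contain $H$ and handling the resulting multiplicities by a cancellation argument, or via Weisner's theorem; alternatively, bypass $\chi$ altogether and prove $r(\mathcal{B})=\sum_{L\in\mL(\mathcal{B})}|\mu(\R^k,L)|$ by a M\"obius-inversion argument. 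With that step repaired, the rest of your outline goes through.
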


Note that since the coefficients of the characteristic polynomial alternate in sign, the number of $j$-dimensional regions, $r_j(\A)$, is given by the sum of the absolute values of the coefficients of the $j$th-level characteristic polynomial.

\section{The conic Steiner formula}\label{se:steiner}

A classic result in integral geometry is the Steiner Formula: the $d$-dimensional measure of the $\e$-neighbourhood of a convex body $K\subset\R^d$ (compact, convex) can be expressed as a polynomial in $\e$ of degree at most $d$, with the {\em intrinsic volumes} as coefficients:
\begin{equation}\label{eq:Steiner-eucl}
 \vol (K+\e B^d) = \sum_{i=0}^d V_i(C) \omega_{d-i} \e^{d-i},  
\end{equation}
where $B^d$ denotes the unit ball, $\omega_{d-i}=\vol(B^{d-i})=\frac{2\pi^{(d-i)/2}}{\Gamma((d-i)/2+1)}$, and the $V_i(K)$ are the Euclidean intrinsic volumes (see, e.g.,~\cite[Theorem 9.2.3]{KR:97}). For example, in the two-dimensional case, we have the situation of Figure~\ref{fig:steiner}. 

\begin{figure}
\begin{tikzpicture}[scale=2]
\def\myEps{0.25}
\draw[thick] (0,0) -- (1,0) -- (0,1) -- (0,0);
\draw[densely dotted] (0,0) -- ++(180:\myEps) (0,0) -- ++(270:\myEps)
              (1,0) -- ++(45:\myEps) (1,0) -- ++(270:\myEps)
              (0,1) -- ++(180:\myEps) (0,1) -- ++(45:\myEps);
\draw (180:\myEps) arc(180:270:\myEps) -- ++(1,0) arc(270:360:\myEps) arc(0:45:\myEps) -- ++(135:1.4142) arc(45:180:\myEps) -- ++(0,-1);
\end{tikzpicture}
\caption{$\mathrm{vol}(K+\e B^2) = \text{ area} + \text{ circumference} \cdot \e + \pi\cdot \e^2$}
\label{fig:steiner}
\end{figure}

In order to state an analogous result for convex cones or spherically convex sets (intersections of convex cones with the unit sphere), we have to agree on a notion of distance. A natural choice here is the capped angle $\sphericalangle(C,\vct{x})=\arccos(\|\Proj_C(\vct{x})\|/\|\vct{x}\|)$. 
Note that with this definition, $\sphericalangle(C,\vct{x})\leq \pi/2$, and is equal to $\pi/2$ if and only if $\vct{x}\in C^{\circ}$.
Note also that for $\vct{x}$ with $\norm{\vct{x}}=1$ and $\alpha\leq \pi/2$, we have $\sphericalangle(C,\vct{x})\leq \alpha$ if and only if $\norm{\Pi_C(\vct{x})}^2\geq \cos^2(\alpha)$.
Using this notion of distance, one obtains a formula similar to the Euclidean Steiner formula~\eqref{eq:Steiner-eucl}, which is usually called \emph{spherical/conic Steiner formula}, see for example~\cite[Chapter 6.5]{scwe:08} and the references given there, or the formula below.

It turns out that, when working with cones rather than spherically convex sets, it is convenient to work with the squared length of the projection on $C$ rather than with the angle. Moreover, it turns out quite useful to also consider the squared length of the projection on the polar cone $C^\polar$. The following general Steiner formula in the conic setting is due to McCoy and Tropp~\cite[Theorem 3.1]{mccoy2014steiner}; its formulation in probabilistic terms, as suggested by Goldstein, Nourdin and Peccati~\cite{goldstein2014gaussian}, is remarkably elegant. We sketch their proof (in the polyhedral case) below.

\begin{theorem}\label{thm:gensteiner}
Let $C\subseteq\R^d$ be a convex polyhedral cone, let $\vct g\in\R^d$ be a Gaussian vector, and let the discrete random variable $V$ on $\{0,1,\ldots,d\}$ be given by $\Prob\{V=k\}=v_k(C)$. Then
\begin{equation}\label{eq:Steiner-conic}
  \big( \norm{\Pi_C(\vct{g})}^2,\norm{\Pi_{C^\polar}(\vct{g})}^2\big) \stackrel{d}{=} \big( X_V,Y_{d-V}\big)
\end{equation}
where $\stackrel{d}{=}$ denotes equality in distribution, and $X_k,Y_k$ are independent $\chi^2$-distributed random variables with $k$ degrees of freedom.
\end{theorem}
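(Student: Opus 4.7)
My plan is to use the conic decomposition~\eqref{eq:conedecomp} together with the orthogonal invariance of the standard Gaussian measure to compute the joint distribution face by face.

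For each face $F\in\F_k(C)$, set $E_F := \{\vct g\in\relint(F)+N_FC\}$. By the disjoint decomposition~\eqref{eq:conedecomp}, the events $E_F$, $F\in\F(C)$, partition $\R^d$; and on $E_F$ the Moreau decomposition~\eqref{eq:moreau} gives $\Pi_C(\vct g)=P_F\vct g$ and $\Pi_{C^\polar}(\vct g)=Q_F\vct g$, where $P_F,Q_F$ denote the orthogonal projections onto $\lin(F)$ and $\lin(N_FC)$, respectively. Since $N_FC\subseteq\lin(F)^\perp$ and $\dim N_FC=d-k$, we have $\lin(N_FC)=\lin(F)^\perp$; in particular, $P_F\vct g$ and $Q_F\vct g$ are independent standard Gaussians of dimensions $k$ and $d-k$.

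The key observation is that both $\relint(F)$ and $N_FC$ are cones, so the indicator $\mathbf{1}_{E_F}$ depends only on the directions $P_F\vct g/\norm{P_F\vct g}$ and $Q_F\vct g/\norm{Q_F\vct g}$. By the classical independence of norm and direction for Gaussian vectors, which is a direct consequence of the orthogonal invariance of the standard Gaussian on each of the two subspaces, the squared norms $\norm{P_F\vct g}^2\sim\chi^2_k$ and $\norm{Q_F\vct g}^2\sim\chi^2_{d-k}$ are mutually independent and independent of $\mathbf{1}_{E_F}$. Consequently, for any bounded measurable $f\colon\R^2\to\R$,
\begin{equation*}
\Expect\bigl[f(\norm{\Pi_C\vct g}^2,\norm{\Pi_{C^\polar}\vct g}^2)\,\mathbf{1}_{E_F}\bigr] = \Prob(E_F)\cdot\Expect[f(X_k,Y_{d-k})],
\end{equation*}
with $X_k,Y_{d-k}$ the independent chi-squared variables appearing in the statement.

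To finish, I would identify $\Prob(E_F)=v_F(C)$ by definition of $v_F(C)$, then sum first over $F\in\F_k(C)$, which gives $\sum_{F\in\F_k(C)} v_F(C)=v_k(C)$, and then over $k$:
\begin{equation*}
\Expect\bigl[f(\norm{\Pi_C\vct g}^2,\norm{\Pi_{C^\polar}\vct g}^2)\bigr] = \sum_{k=0}^d v_k(C)\,\Expect[f(X_k,Y_{d-k})] = \Expect[f(X_V,Y_{d-V})],
\end{equation*}
which yields the stated equality in distribution. The only delicate point is the factorization of $\Prob(E_F)$ together with the independence of radial and angular parts on each component; both are immediate consequences of the rotational invariance of the Gaussian on the orthogonal subspaces $\lin(F)$ and $\lin(F)^\perp$.
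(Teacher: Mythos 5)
Your argument is correct and is essentially the paper's own proof: the same facial decomposition~\eqref{eq:conedecomp}, the same identification of $\Pi_C(\vct g)$ and $\Pi_{C^\polar}(\vct g)$ with the orthogonal projections onto $\lin(F)$ and $\lin(N_FC)$ on each piece, and the same use of Gaussian rotational invariance to factor out $\Prob(E_F)=v_F(C)$ before summing over faces. Your spelling-out of the radial/angular independence merely makes explicit the paper's appeal to ``spherical coordinates and orthogonal invariance,'' so no changes are needed.
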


A geometric interpretation of this form of the conic Steiner formula is readily obtained by considering \emph{moments} of the two sides in~\eqref{eq:Steiner-conic}. Indeed, the expectation of $f\big( \norm{\Pi_C(\vct{g})}^2,\norm{\Pi_{C^\polar}(\vct{g})}^2\big)$ equals the Gaussian volume of the angular neighbourhood around $C$ of radius~$\alpha\leq\frac{\pi}{2}$,
i.e., of the set $T_{\alpha}(C):=\{\vct x\mid \sphericalangle(C,\vct x)\leq\alpha\}$,
if one sets $f(x,y)=1$ if $x/(x+y)\geq\cos^2\alpha$, and $f(x,y)=0$ otherwise. For this choice of~$f$ the expectation of $f\big( X_V,Y_{d-V}\big)$ becomes a finite sum $\sum_{k=0}^d v_k(C) \Prob\{\vct g\in T_\alpha(L_k)\}$, where $T_\alpha(L_k)$ denotes the angular neighbourhood of radius~$\alpha$ around a $k$-dimensional linear subspace.
These Gaussian volumes of angular neighborhoods of linear subspaces replace the monomials in the Euclidean Steiner formula~\eqref{eq:Steiner-eucl}.
By taking a suitable moment of~\eqref{eq:Steiner-conic} we obtain the usual conic Steiner formula.

\begin{proof}[Proof sketch of Theorem~\ref{thm:gensteiner}]
In order to show the claimed equality in distribution~\eqref{eq:Steiner-conic} it suffices to show that the moments coincide. Let $f\colon \R^2_+\to \R$ be a Borel measurable function. In view of the decomposition~\eqref{eq:orthodecom} we can express the expectation of $f\big( \norm{\Pi_C(\vct{g})}^2,\norm{\Pi_{C^\polar}(\vct{g})}^2\big)$ as 
\begin{equation*}
  \Expect\big[f\big( \norm{\Pi_C(\vct{g})}^2,\norm{\Pi_{C^\polar}(\vct{g})}^2\big)\big] = \sum_{k=0}^d\sum_{F\in \F_k(C)} \Expect[f(\norm{\Proj_C(\vct{g})}^2,\norm{\Proj_{C^{\polar}}(\vct{g})}^2)\, 1_{\{\Proj_C(\vct{g})\in\relint(F)\}}].
\end{equation*}
Notice now that for $g\in (\relint F)+N_FC$ we have $\Proj_C(\vct{g}) = \Proj_{\lin(F)}(\vct{g})$ and $\Proj_{C^{\polar}}(\vct{g})=\Proj_{\lin(N_FC)}(g)$. This implies
\begin{align*}
   & \Expect\big[f\big(\norm{\Proj_C(\vct{g})}^2,\norm{\Proj_{C^{\polar}}(\vct{g})}^2\big)\, 1_{\{\Proj_C(\vct{g})\in \relint(F)\}}\big]
\\ & = \Expect\big[f\big(\norm{\Proj_{\lin(F)}(\vct{g})}^2,\norm{\Proj_{\lin(N_FC)}(\vct{g})}^2\big)\, 1_{\{\Proj(\vct{g})\in \relint(F)\}}\big].
\end{align*}
Using spherical coordinates and the orthogonal invariance of Gaussian vectors, one can deduce that the above expectation equals
\begin{align*}
  & \Expect\big[f\big(\norm{\Proj_{\lin(F)}(\vct{g})}^2,\norm{\Proj_{\lin(N_FC)}(\vct{g})}^2\big)\, 1_{\{\Proj_C(\vct{g})\in \relint(F)\}}\big]
\\ & = \Expect\big[f\big(\norm{\Proj_{L_k}(\vct{g})}^2,\norm{\Proj_{L_k^{\perp}}(\vct{g})}^2\big)\big] \, \Prob\{\Proj_C(\vct{g})\in \relint(F)\} = \Expect[f(X_k,Y_{d-k})] \, v_F(C),
\end{align*}
where $L_k$ denotes an arbitrary $k$-dimensional linear subspace. Summing up the terms gives rise to the claimed coincidence of moments, which shows equality of the distributions.
\end{proof}

A useful consequence of the general Steiner formula is that the moment generating functions of the discrete random variable $V$ from Theorem~\ref{thm:gensteiner} and the continuous random variable $\|\Pi_C(\vct g)\|^2$ coincide up to reparametrization:
  \[ \Expect[e^{tV}] = \Expect[e^{s\|\Pi_C(\vct g)\|^2}] ,\qquad s=\tfrac{1-e^{-2t}}{2} \]
which directly follows from~\eqref{eq:Steiner-conic} by the well-known formula for the moment generating function of $\chi^2$-distributed random variables, $\Expect[e^{sX_k}]=(1-2s)^{-k/2}$. This result is from~\cite{mccoy2014steiner}, where it is used to derive a concentration result for the random variable $V$,
and also underlies the argumentation in~\cite{goldstein2014gaussian}, where a central limit theorem for $V$ is derived. 

\section{Gauss-Bonnet and the face lattice}\label{sec:Gauss-Bonnet}

The Gauss-Bonnet Theorem is a celebrated result in differential geometry connecting curvature with the Euler characteristic. In the setting of polyhedral cones, this theorem asserts that the alternating sum of the intrinsic volumes equals the alternating sum of the $f$-vector,
\begin{equation*}
 \sum_{k=0}^d (-1)^k v_k(C) = \sum_{k=0}^d (-1)^k f_k(C). 
\end{equation*}

The main goal of this section is to show the connections between the Gauss-Bonnet relation, a result by Sommerville~\cite{sommerville1927relations}, which can be seen as a conic version of the Brianchon-Euler-Gram relation for polytopes~\cite[14.1]{grunbaum1967convex}, and a result by Gr\"unbaum~\cite[Thm.~2.8]{G:68}. More precisely, we will provide an elementary proof of the result by Sommerville, which is basically an application of Farkas' Lemma, and show how the other relations are easily deduced from this.
The derivation of the Gauss-Bonnet relation from the Sommerville relation presented here follows McMullen~\cite{mcmullen1975non}, who used the language of internal and external angles (see Section~\ref{sub:extangle}).

\begin{theorem}[Sommerville]\label{thm:somm}
For any polyhedral cone $C\subseteq \R^d$,
\begin{equation}\label{eq:somm}
  v_0(C) = \sum_{F\in \F(C)} (-1)^{\dim F} v_0(F) .
\end{equation}
\end{theorem}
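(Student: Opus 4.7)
The plan is to recast~\eqref{eq:somm} as a Gaussian integral identity and verify it pointwise. First, one reduces to the pointed case: if the lineality space $L := C\cap(-C)$ is nontrivial then $L$ is contained in every face of $C$, so no face of $C$ has a zero-dimensional face and $v_0(F) = 0$ for every $F \in \F(C)$, making both sides of~\eqref{eq:somm} vanish. Assume then that $C$ is pointed, which ensures every face of $C$ is pointed as well. For a pointed face $F$, the Moreau decomposition~\eqref{eq:moreau} gives $v_0(F) = \Prob\{\Proj_F(\vct g) = \vct 0\} = \Prob\{\vct g \in F^\polar\}$, for $\vct g$ a standard Gaussian vector on $\R^d$. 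Invoking the symmetry $\vct g \stackrel{d}{=} -\vct g$, the relation~\eqref{eq:somm} will follow from the pointwise identity
\begin{equation}\label{eq:sommerville-plan}
  \sum_{F \in \F(C)} (-1)^{\dim F}\, 1_{F^\polar}(\vct x) \;=\; 1_{-C^\polar}(\vct x),
\end{equation}
valid for Lebesgue-almost every $\vct x \in \R^d$.

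Next, observe that $\vct x \in F^\polar$ is equivalent to $F \subseteq \vct x^\perp_{-} := \{\vct y : \langle \vct x, \vct y\rangle \leq 0\}$, so the left-hand side of~\eqref{eq:sommerville-plan} equals $\sum_{F \subseteq \vct x^\perp_{-}} (-1)^{\dim F}$. Introduce the sliced cone $K_{\vct x} := C \cap \vct x^\perp_{-}$, which is pointed since $C$ is. For generic $\vct x$---those avoiding the measure-zero union $\bigcup_{F \in \F(C),\, F \neq \vct 0} \lin(F)^\perp$---Proposition~\ref{le:dualintersect} yields a clean description of $\F(K_{\vct x})$: every face $F \in \F(C)$ with $F \subseteq \vct x^\perp_{-}$ remains a face of $K_{\vct x}$, whereas every face $F \in \F(C)$ strictly straddling $\vct x^\perp$ contributes two distinct faces to $K_{\vct x}$, namely $F \cap \vct x^\perp_{-}$ and $F \cap \vct x^\perp$, of dimensions $\dim F$ and $\dim F - 1$. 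Since such pairs cancel in the alternating sum,
\[
  \sum_{F \subseteq \vct x^\perp_{-}} (-1)^{\dim F} \;=\; \sum_{F' \in \F(K_{\vct x})} (-1)^{\dim F'},
\]
which is the Euler sum of $K_{\vct x}$.

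The proof concludes with Euler's relation~\eqref{Euler} applied to $K_{\vct x}$, combined with a Farkas-style dichotomy: $K_{\vct x} = \vct 0$ precisely when $\langle \vct x, \vct y\rangle > 0$ for every $\vct y \in C\setminus\vct 0$, i.e., when $\vct x \in \inter(-C^\polar)$ (this is the content of Lemma~\ref{le:farkas} applied to $C$ and the half-space $\vct x^\perp_{-}$). In that case $K_{\vct x}$ is a subspace of dimension zero, so~\eqref{Euler} yields Euler sum $1 = 1_{-C^\polar}(\vct x)$. Otherwise $K_{\vct x}$ is a nonzero pointed cone and hence not a subspace, so~\eqref{Euler} gives Euler sum $0$; correspondingly $\vct x \notin -C^\polar$ except on the measure-zero boundary $\partial(-C^\polar)$. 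Integrating~\eqref{eq:sommerville-plan} against the Gaussian density and invoking $\Prob\{\vct g \in -C^\polar\} = \Prob\{\vct g \in C^\polar\} = v_0(C)$ finishes the proof. The main obstacle is the middle step's bookkeeping---verifying via Proposition~\ref{le:dualintersect} that for generic $\vct x$ every face of $K_{\vct x}$ is accounted for exactly once by the partition of $\F(C)$ into the ``below'' faces and the straddling faces, so that the straddling pairs cancel in the alternating sum.
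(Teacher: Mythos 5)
Your proof is correct, but it takes a genuinely different route from the paper's. The paper also randomizes over a Gaussian direction $\vct g$, but it slices $C$ with the \emph{hyperplane} $H=\vct g^\perp$ and evaluates the expected Euler sum $\overline{\chi}=\sum_i(-1)^i f_i(C\cap H)$ in two ways: by the Euler relation, $\overline{\chi}$ is the indicator of the event $\{C\cap H=\zerovct\}$, whose probability is $2v_0(C)$ by Farkas, while face by face $\Expect[f_i(C\cap H)]=\sum_{F\in\F_{i+1}(C)}\Prob\{F\cap H\neq\zerovct\}=f_{i+1}(C)-2\sum_{F\in\F_{i+1}(C)}v_0(F)$; alternating the sum and applying Euler once more gives \eqref{eq:somm}. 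You instead slice with the \emph{half-space} $\{\vct y\mid\langle\vct x,\vct y\rangle\leq0\}$ and prove the stronger deterministic statement $\sum_{F\in\F(C)}(-1)^{\dim F}1_{F^\polar}(\vct x)=1_{-C^\polar}(\vct x)$ for almost every $\vct x$ --- a pointwise, Brianchon--Gram--Sommerville-type identity of indicators --- integrating against the Gaussian only at the very end. What your route buys is that probability enters only trivially and Farkas/separation is invoked just once (to identify $K_{\vct x}=\zerovct$ with $\vct x\in\inter(-C^\polar)$), whereas the paper uses it once per face; the price is the heavier facial bookkeeping for the half-space section that you flag. When you carry that out, note that ``below plus straddling'' is \emph{not} a partition of $\F(C)$: there are also nonzero faces lying strictly on the positive side, and the genericity condition $\vct x\notin\lin(F')^\perp$ for every nonzero face $F'$ (recalling that faces of faces of $C$ are faces of $C$) is exactly what guarantees that such a face meets the half-space only in $\zerovct$, that a below face meets $\vct x^\perp$ only in $\zerovct$, and that $F\mapsto F\cap\{\langle\vct x,\cdot\rangle\leq0\}$ and $F\mapsto F\cap\vct x^\perp$ are injective on straddling faces with dimensions $\dim F$ and $\dim F-1$; with these checks (all of which do go through, using Proposition~\ref{le:dualintersect} for completeness of the list) the cancellation, and hence your pointwise identity and the theorem, is exactly right.
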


\begin{proof}
Both sides in~\eqref{eq:somm} are zero if $C$ contains a nonzero linear subspace. So we assume in the following that $C$ is pointed, $C\cap (-C)=\vct0$. Let $\vct{g}$ be a random Gaussian vector and $H=\vct{g}^{\perp}$ the orthogonal complement, which is almost surely a hyperplane. By Farkas' Lemma~\ref{le:farkas},
\begin{equation}\label{eq:11}
 \Prob\{C\cap H = \zerovct\} = \Prob\{\vct{g}\in C^{\circ}\cup -C^{\circ}\} = 2\Prob\{\vct{g}\in C^{\circ}\}=2v_0(C).
\end{equation}
Note that with probability $1$, the intersection $C\cap H$ is either $\zerovct$ or has dimension $\dim C-1$.
Setting $\overline{\chi}=\sum_{i=0}^{d-1}(-1)^if_i(C\cap H)$, the Euler relation~\eqref{Euler} implies $\overline{\chi}=0$ if $C\cap H\neq \zerovct$ and $\overline{\chi}=1$ if $C\cap H= \zerovct$. Using~\eqref{eq:11} we get the expected value
\begin{equation}\label{eq:12}
 \Expect\left[\overline{\chi}\right] = \Expect\left[\overline{\chi} \ |\ C\cap H\neq \zerovct\right](1-2v_0(C))+
 \Expect\left[\overline{\chi}\ |\ C\cap H= \zerovct\right]2v_0(C)= 2v_0(C).
\end{equation}
On the other hand, for $0<i<d$ and using~\eqref{eq:11},
\begin{equation*}
 \Expect\left[f_i(H\cap C)\right]=\sum_{F\in \F_{i+1}(C)}\Prob\{F\cap H\neq \zerovct\} = f_{i+1}(C)-2\sum_{F\in \F_{i+1}(C)} v_0(F) ,
\end{equation*}
where in the first step we used the fact that almost surely every $i$-dimensional face of $C\cap H$ is of the form $F\cap H$, with $F\in \mF_{i+1}(C)$, and for every $F\in \mF_{i+1}(C)$ the intersection $F\cap H$ is either an $i$-dimensional face of $C\cap H$ or $\vct0$. Alternating the sum and using linearity of expectation,
\begin{align*}
 \Expect[\overline{\chi}]&=1+\sum_{i=1}^{d-1}(-1)^i \Expect[f_i(C\cap H)]= 1+\sum_{i=1}^{d-1}(-1)^i\bigg( f_{i+1}(C)-2\sum_{F\in \F_{i+1}(C)} v_0(F) \bigg)\\
 &=1-\sum_{i=2}^{d}(-1)^{i}f_{i}(C)+2\sum_{i=2}^{d}\sum_{F\in\F_i(C)} (-1)^{\dim F} v_0(F)\\
 &=1+f_0(C)-f_1(C)-\sum_{i=0}^{d}(-1)^{i}f_{i}(C)+2\bigg(-v_0(\zerovct)+\sum_{F\in\mF_1(C)}v_0(F)+\sum_{F\in \F(C)} (-1)^{\dim F} v_0(F)\bigg)\\
 &= 2\sum_{F\in \F(C)} v_0(F)(-1)^{\dim F},
\end{align*}
where in the final step we used the Euler relation~\eqref{Euler}, the fact that $f_1(C)=2\sum_{\dim F=1} v_0(F)$ (because each $F^{\circ}$ is a halfspace), and $f_0(C)=v_0(\vct{0})=1$. Combining this with~\eqref{eq:12} yields the claim.
\end{proof}

The following theorem is a simple generalization of Sommerville's Theorem. Recall from Section~\ref{sec:intr-vols} that $v_{G}(F)=0$ if $G$ is not contained in $F$.

\begin{theorem}\label{thm:2}
Let $C\subseteq\R^d$ be a polyhedral cone. Then for any face $G\subseteq C$,
\begin{equation}\label{eq:somm-gen}
  (-1)^{\dim G} v_G(C) = \sum_{F\in \F(C)} (-1)^{\dim F} v_G(F) .
\end{equation}
\end{theorem}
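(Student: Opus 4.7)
The plan is to reduce to Sommerville's Theorem~\ref{thm:somm}, applied to the projected cone $C_G := \pi(C)$ in $\lin(G)^\perp$, where $\pi \colon \R^d \to \lin(G)^\perp$ denotes orthogonal projection.

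First I would factor each summand. For $F \in \F(C)$ with $G \subseteq F$, the face $G$ is also a face of $F$, so the disjoint decomposition~\eqref{eq:conedecomp} applied to $F$ shows that $\Proj_F(\vct g) \in \relint(G)$ if and only if $\vct g \in \relint(G) + N_GF$. Since $\relint(G) \subseteq \lin(G)$ and $N_GF = F^\polar \cap \lin(G)^\perp \subseteq \lin(G)^\perp$, splitting the Gaussian $\vct g = \vct g_1 + \vct g_2$ into its independent components along $\lin(G)$ and $\lin(G)^\perp$ yields
\[ v_G(F) = \alpha(G) \cdot \alpha(N_G F) , \]
where $\alpha(\cdot)$ denotes the solid angle in the ambient linear span. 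Because relative interiors of distinct faces are disjoint, only faces $F \supseteq G$ contribute to the right-hand side of~\eqref{eq:somm-gen}.

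Next I would set up the face bijection $F \mapsto \pi(F)$ between $\{F \in \F(C) : F \supseteq G\}$ and $\F(C_G)$. By Proposition~\ref{le:dualintersect}, $(C + \lin(G))^\polar = C^\polar \cap \lin(G)^\perp = N_GC$, so supporting hyperplanes of $C_G$ in $\lin(G)^\perp$ correspond to non-zero $\vct h \in N_GC$; for each such $\vct h$ the face $F_{\vct h} := C \cap \vct h^\perp$ of $C$ contains $G$ (since $\vct h \in \lin(G)^\perp$ forces $G \subseteq \vct h^\perp$), and one checks $\pi(F_{\vct h}) = C_G \cap \vct h^\perp$ together with $(F + \lin(G)) \cap C = F$. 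This gives the bijection, with $\dim \pi(F) = \dim F - \dim G$. The identity $\langle \pi(\vct x), \vct h\rangle = \langle \vct x, \vct h\rangle$ for $\vct h \in \lin(G)^\perp$ furthermore shows $\pi(F)^\polar = F^\polar \cap \lin(G)^\perp = N_GF$ inside $\lin(G)^\perp$; hence $v_0(\pi(F)) = \alpha(N_GF)$, with $v_0$ computed in $\lin(G)^\perp$.

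Finally, Sommerville's Theorem applied to $C_G$ in $\lin(G)^\perp$ gives
\[ v_0(C_G) \;=\; \sum_{\tilde F \in \F(C_G)} (-1)^{\dim\tilde F} v_0(\tilde F) \;=\; \sum_{F \supseteq G} (-1)^{\dim F - \dim G} v_0(\pi(F)) . \]
Multiplying by $\alpha(G)$ and substituting the factorization from the first step yields $v_G(C) = (-1)^{\dim G}\sum_{F \supseteq G}(-1)^{\dim F} v_G(F)$, which is~\eqref{eq:somm-gen} after multiplication by $(-1)^{\dim G}$. If $C_G$ contains a non-zero linear subspace, both sides of the resulting identity vanish and Sommerville yields $0 = 0$, so this case is handled uniformly. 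The main obstacle is the second step---setting up the bijection between faces of $C_G$ and faces of $C$ containing $G$, together with the polar identification $\pi(F)^\polar = N_GF$; once this is in place, the factorization and the application of Sommerville are essentially immediate.
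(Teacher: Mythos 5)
Your proposal is correct and follows essentially the same route as the paper: project onto $\lin(G)^\perp$, factor $v_G(F)$ into the Gaussian measures of $G$ and $N_GF$ via independence of the components of $\vct g$, and apply Sommerville's Theorem to the projected cone $C/G$. You merely make explicit the face correspondence $F\mapsto \pi(F)$ and the polar identification $\pi(F)^\polar=N_GF$ that the paper's proof compresses into the notation $F/G$ and the identity $v_G(C)=v_G(G)\,v_0(C/G)$.
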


\begin{proof}
If $G=\vct0$ then we obtain Sommerville's Theorem~\ref{thm:somm}. Let $G\neq \zerovct$ and let $C/G$ denote the orthogonal projection of~$C$ onto the orthogonal complement of the linear span of~$G$. It follows from the Gaussian distribution that $v_G(C)=v_{G}(G)\,v_0(C/G)$, which can be expressed as
\begin{equation*}
   v_{G}(G)\,v_0(C/G) = v_G(G)\sum_{F/G\in \F(C/G)} (-1)^{\dim F/G}v_0(F/G) = \sum_{F\in \F(C)} (-1)^{\dim F-\dim G}v_G(F),
\end{equation*}
where in the first step we used Sommerville's Theorem, and in the second step we used that $v_G(F)=0$ if $G$ is not a face of $F$, and $\dim F/G=\dim F-\dim G$. This shows the claim.
\end{proof}

The following corollary is~\cite[Thm.~2.8]{G:68}, cf.~Section~\ref{sec:grassm-ang}.

\begin{corollary}\label{thm:1}
Let $C\subseteq\R^d$ be a closed convex cone. Then
\begin{equation}\label{eq:gbl}
  (-1)^k v_k(C) = \sum_{F\in \F(C)} (-1)^{\dim F} v_k(F).
\end{equation}
\end{corollary}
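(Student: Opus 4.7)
The plan is to derive Corollary~\ref{thm:1} as an immediate consequence of Theorem~\ref{thm:2}, by summing that identity over the $k$-dimensional faces of $C$. Concretely, I would fix $k$ and apply Theorem~\ref{thm:2} to each $G\in\F_k(C)$, obtaining
\[ (-1)^k v_G(C) = \sum_{F\in\F(C)} (-1)^{\dim F}\, v_G(F) , \]
and then sum over all such $G$.

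On the left, every term carries the common factor $(-1)^k$, and $\sum_{G\in\F_k(C)} v_G(C) = v_k(C)$ by the definition of the $k$-th intrinsic volume given in Section~\ref{sec:intr-vols}. So the left-hand side becomes $(-1)^k v_k(C)$, which is precisely what we want.

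For the right-hand side, I would swap the order of summation to get
\[ \sum_{F\in\F(C)} (-1)^{\dim F} \Bigl(\sum_{G\in\F_k(C)} v_G(F)\Bigr) . \]
The inner sum is where the identification happens. Since $v_G(F)=0$ whenever $G\not\subseteq F$ (recalled from Section~\ref{sec:intr-vols}), only those $G\in\F_k(C)$ with $G\subseteq F$ contribute. By the standard transitivity of the face relation, a $k$-face of $C$ contained in the face $F$ of $C$ is precisely a $k$-face of $F$; conversely every $k$-face of $F$ is a $k$-face of $C$. Hence the inner sum collapses to $\sum_{G\in\F_k(F)} v_G(F) = v_k(F)$, giving exactly the right-hand side of~\eqref{eq:gbl}.

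I do not foresee any real obstacle here; the argument is essentially a combinatorial rearrangement of Theorem~\ref{thm:2}. The only point that deserves a moment's care is the face-in-face identification $\{G\in\F_k(C):G\subseteq F\} = \F_k(F)$, which is a standard fact about the face lattice of a polyhedral cone (Example~\ref{ex:face-latt}) and follows directly from the definition of a face via supporting hyperplanes.
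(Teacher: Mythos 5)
Your argument is correct and is precisely the paper's proof: sum the identity of Theorem~\ref{thm:2} over all $G\in\F_k(C)$, exchange the sums, and use that $v_G(F)=0$ unless $G\subseteq F$ together with the identification of the $k$-faces of $C$ contained in $F$ with $\F_k(F)$. You have merely written out explicitly the bookkeeping that the paper compresses into one sentence.
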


\begin{proof}
Follows by summing in~\eqref{eq:somm-gen} over all $k$-dimensional faces and noting that for every face $F$ of $C$ we have $\F_k(F)\subseteq\F_k(C)$.
\end{proof}

\begin{corollary}[Gauss-Bonnet]\label{cor:GB}
For a polyhedral cone $C$,
\begin{equation}\label{eq:gaussbonnet}
  \sum_{i=0}^d(-1)^i v_i(C)=\sum_{i=0}^d(-1)^i f_i(C)
    = \begin{cases}
        (-1)^{\dim C} & \text{ if } C \text{ is a linear subspace,}\\
        0 & \text{ else.}
      \end{cases}
\end{equation}
\end{corollary}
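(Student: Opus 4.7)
The plan is to derive the first equality in \eqref{eq:gaussbonnet} directly from Corollary~\ref{thm:1}, and then invoke the Euler relation \eqref{Euler} for the second equality. The key observation is that the intrinsic volumes of any polyhedral cone form a probability distribution, so $\sum_{k=0}^d v_k(F) = 1$ for every face $F$ of $C$.

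Starting from Corollary~\ref{thm:1}, I would sum the identity $(-1)^k v_k(C) = \sum_{F\in \F(C)} (-1)^{\dim F} v_k(F)$ over $k=0,\dots,d$, and then swap the order of summation:
\begin{equation*}
  \sum_{k=0}^d (-1)^k v_k(C) \;=\; \sum_{F\in \F(C)} (-1)^{\dim F} \sum_{k=0}^d v_k(F) \;=\; \sum_{F\in \F(C)} (-1)^{\dim F} \;=\; \sum_{i=0}^d (-1)^i f_i(C),
\end{equation*}
where the middle equality uses that the intrinsic volumes of $F$ sum to $1$, and the last equality regroups faces by dimension. This establishes the first equality in \eqref{eq:gaussbonnet}, and the second equality is then immediate from the Euler relation~\eqref{Euler} stated at the start of the preliminaries.

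There is essentially no hard step here: the entire content is already packaged in Corollary~\ref{thm:1} (which in turn came from Sommerville's Theorem via the Gaussian-projection argument), together with the probabilistic interpretation of the $v_k(F)$. The only thing to verify is the interchange of summation, which is trivial since both sums are finite. Thus the corollary is really just the observation that Corollary~\ref{thm:1}, when summed over $k$, collapses the intrinsic volumes of each face into a $1$ and recovers the $f$-vector alternating sum, after which Euler finishes the argument.
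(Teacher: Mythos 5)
Your proof is correct and follows exactly the paper's own argument: sum the identity of Corollary~\ref{thm:1} over $k$, interchange the finite sums, use $\sum_k v_k(F)=1$ to collapse each face's contribution, and finish with the Euler relation~\eqref{Euler}. No gaps.
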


\begin{proof}
Summing the terms in~\eqref{eq:gbl} over $k$ and using $\sum_{k=0}^{d} v_k(C)=1$ yields
\begin{equation*}
  \sum_{k=0}^{d} (-1)^k v_k(C) = \sum_{k=0}^{d} \sum_{F\in \F(C)} (-1)^{\dim F} v_k(F) = \sum_{F\in \F(C)} (-1)^{\dim F} \sum_{k=0}^{d} v_k(F) = \sum_{k=0}^{d} (-1)^k f_k(C) .
\end{equation*}
The rest follows from the Euler relation~\eqref{Euler}.
\end{proof}

If $C$ is not a linear subspace, then the Gauss-Bonnet relation can be interpreted as saying that the random variable $V$ on $\{0,1,\ldots,d\}$ given by $\Prob\{V=k\}=v_k(C)$, actually decomposes into two random variables $V^0,V^1$ on $\{0,2,4,\ldots,2\lfloor\frac{d}{2}\rfloor\}$ and $\{1,3,5,\ldots,2\lfloor\frac{d-1}{2}\rfloor+1\}$, respectively, such that
  \[ \Prob\{V^0=k\}=2v_{k}(C) \quad\text{if $k$ even}, \qquad \Prob\{V^1=k\}=2v_k(C) \quad\text{if $k$ odd}. \]
In fact, the same argument that gives the general Steiner formula~\eqref{eq:Steiner-conic} also shows that
\begin{align*}
   \big( \norm{\Pi_C(\vct{g}^0)}^2,\norm{\Pi_{C^\polar}(\vct{g}^0)}^2\big) & \stackrel{d}{=} \big( X_{V^0},Y_{d-V^0}\big) , & \big( \norm{\Pi_C(\vct{g}^1)}^2,\norm{\Pi_{C^\polar}(\vct{g}^1)}^2\big) & \stackrel{d}{=} \big( X_{V^1},Y_{d-V^1}\big) ,
\end{align*}
where $\vct g^0$ and $\vct g^1$ denote Gaussian vectors conditioned on their projection on $C$ falling in an even- or odd-dimensional face, respectively, and $X_k,Y_k$ are independent $\chi^2$-distributed random variables with~$k$ degrees of freedom. We can paraphrase~\eqref{eq:gbl} in terms of the moments of these random variables.

\begin{corollary}
Let $f\colon\R_+^2\to\R$ be a Borel measurable function, and for $C\subseteq\R^d$ a polyhedral cone, which is not a linear subspace, let $\phi_f(C),\phi_f^0(C),\phi_f^1(C)$ denote the moments
\begin{align*}
   \phi_f(C) & = \Expect\big[f\big( \norm{\Pi_C(\vct{g})}^2,\norm{\Pi_{C^\polar}(\vct{g})}^2\big)\big] , & \phi_f^{0/1}(C) & = \Expect\big[f\big( \norm{\Pi_C(\vct{g}^{0/1})}^2,\norm{\Pi_{C^\polar}(\vct{g}^{0/1})}^2\big)\big] .
\end{align*}
Then we have
\begin{align*}
   \frac{\phi_f^0(C)-\phi_f^1(C)}{2} & = \sum_{F\in\mF(C)} (-1)^{\dim F} \phi_f(F) ,
 & \phi_f(C) & = \sum_{F\in\mF(C)} (-1)^{\dim F} \frac{\phi_f^0(F)-\phi_f^1(F)}{2} .
\end{align*}
\end{corollary}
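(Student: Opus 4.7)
The plan is to expand $\phi_f$ and $\phi_f^{0/1}$ as linear combinations of intrinsic volumes with dimension-dependent coefficients, then to obtain the first identity from Corollary~\ref{thm:1} and the second by Möbius inversion on the face lattice. Write $e_k := \Expect[f(X_k,Y_{d-k})]$. The argument from the proof of Theorem~\ref{thm:gensteiner}, carried out with a general Borel function $f$ instead of an indicator, establishes, for every polyhedral cone $K\subseteq\R^d$,
$$\phi_f(K) \;=\; \sum_{k=0}^d e_k\,v_k(K),$$
because each $F\in\mF_k(K)$ contributes $e_k\,v_F(K)$ to the expectation.

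To handle $\phi_f^{0/1}(C)$, note that $C$ is not a linear subspace, so Gauss-Bonnet (Corollary~\ref{cor:GB}) yields $\sum_{k\text{ even}} v_k(C) = \sum_{k\text{ odd}} v_k(C) = \tfrac12$, making $\Prob\{V^0=k\}=2v_k(C)$ for even $k$ and $\Prob\{V^1=k\}=2v_k(C)$ for odd $k$ well-defined probability distributions. Running the same projection/moment calculation for $\vct g^0$ and $\vct g^1$ in place of $\vct g$ then gives
$$\phi_f^0(C)=2\sum_{k\text{ even}} e_k v_k(C), \qquad \phi_f^1(C)=2\sum_{k\text{ odd}} e_k v_k(C),$$
and hence $\tfrac12(\phi_f^0(C)-\phi_f^1(C)) = \sum_{k=0}^d(-1)^k e_k v_k(C)$. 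Multiplying the identity $(-1)^k v_k(C) = \sum_{F\in\mF(C)}(-1)^{\dim F}v_k(F)$ of Corollary~\ref{thm:1} by $e_k$ and summing over $k$ converts the right-hand side into $\sum_{F\in\mF(C)}(-1)^{\dim F}\phi_f(F)$, which is the first identity.

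For the second identity, apply the first identity with $C$ replaced by an arbitrary face $G\in\mF(C)$, using the convention $\psi(F):=\sum_k(-1)^k e_k v_k(F)$, which coincides with $\tfrac12(\phi_f^0(F)-\phi_f^1(F))$ whenever $F$ is not itself a linear subspace. This yields $\psi(G) = \sum_{F\subseteq G}(-1)^{\dim F}\phi_f(F)$ for every $G\in\mF(C)$. Since the Möbius function of the face lattice is $\mu(F,G)=(-1)^{\dim G-\dim F}$ (Section~\ref{sec:posets}), Möbius inversion~\eqref{eq:moeb-inv} gives $(-1)^{\dim G}\phi_f(G) = \sum_{F\subseteq G}(-1)^{\dim G-\dim F}\psi(F)$, equivalently $\phi_f(G) = \sum_{F\subseteq G}(-1)^{\dim F}\psi(F)$; specializing $G=C$ concludes. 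The only genuine obstacle is the notational one just flagged: the conditional expectations $\phi_f^{0/1}(F)$ are undefined on faces $F$ that are themselves linear subspaces (such as the minimal face $C\cap(-C)$), so the uniform definition of $\psi$ is required for the second identity to make literal sense.
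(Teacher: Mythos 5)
Your argument is correct and follows essentially the same route as the paper: expand the moments via the general Steiner formula as $\sum_k \Expect[f(X_k,Y_{d-k})]\,v_k$, obtain the first identity from Corollary~\ref{thm:1}, and obtain the second by M\"obius inversion on the face lattice with $\mu(F,C)=(-1)^{\dim C-\dim F}$. Your explicit handling of faces that are linear subspaces (via the uniform definition of $\psi$) addresses a point the paper leaves implicit, but it does not change the approach.
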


\begin{proof}
The first equation is obtained by invoking the general Steiner formula and applying~\eqref{eq:gbl}:
\begin{align*}
   \frac{\phi_f^0(C)-\phi_f^1(C)}{2} & = \sum_{k=0}^d (-1)^k \Expect\big[f\big(X_k,Y_{d-k}\big)\big] v_k(C)
\\ & = \sum_{k=0}^d \Expect\big[f\big(X_k,Y_{d-k}\big)\big] \sum_{F\in \F(C)} (-1)^{\dim F} v_k(F) = \sum_{F\in \F(C)} (-1)^{\dim F} \phi_f(F) .
\end{align*}
The second equation is obtained by using M\"obius inversion~\eqref{eq:moeb-inv} and noting that the M\"obius function of the face lattice is $\mu(F,C)=(-1)^{\dim C-\dim F}$.
\end{proof}

We list a few more corollaries, the usefulness of which may yet need to be established.
The proofs are variations of the proof of Corollary~\ref{cor:GB}.

\begin{corollary}\label{cor:statdim}
For the statistical dimension $\delta(C)$ we obtain
\begin{equation*}
 \sum_{k=0}^{d} (-1)^k k \cdot v_k(C) = \sum_{F\in \F(C)} (-1)^{\dim F} \delta(F) .
\end{equation*}
In particular, if $\dim C$ is odd, then
\begin{equation*}
  2\sum_{k \text{ even}} k \ v_k(C) = \sum_{F\subset C} (-1)^{\dim F} \delta(F),
\end{equation*}
and if $\dim C$ is even, then
\begin{equation*}
  2\sum_{k \text{ odd}} k \ v_k(C) = -\sum_{F\subset C} (-1)^{\dim F} \delta(F).
\end{equation*}
\end{corollary}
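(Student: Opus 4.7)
The main identity is essentially immediate from Corollary~\ref{thm:1}. The plan is to take the relation $(-1)^k v_k(C) = \sum_{F\in \F(C)} (-1)^{\dim F} v_k(F)$, multiply both sides by $k$, sum over $k=0,\ldots,d$, swap the order of summation on the right, and recognize $\sum_{k=0}^d k\,v_k(F) = \delta(F)$ as the statistical dimension of each face. This gives the first display directly, with no bookkeeping required.

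For the two parity-split identities, I would introduce the abbreviations
\[
  S_{\text{even}} = \sum_{k \text{ even}} k\,v_k(C), \qquad S_{\text{odd}} = \sum_{k \text{ odd}} k\,v_k(C),
\]
so that $S_{\text{even}} + S_{\text{odd}} = \delta(C)$ by definition of the statistical dimension, while $S_{\text{even}} - S_{\text{odd}} = \sum_{k=0}^d (-1)^k k\,v_k(C) = \sum_{F\in\F(C)}(-1)^{\dim F}\delta(F)$ by what was just proved. Solving this $2\times2$ linear system gives
\[
  2 S_{\text{even}} = \delta(C) + \sum_{F\in\F(C)}(-1)^{\dim F}\delta(F), \qquad 2 S_{\text{odd}} = \delta(C) - \sum_{F\in\F(C)}(-1)^{\dim F}\delta(F).
\]

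To match the stated form I would split off the face $F=C$ from the right-hand sum, which contributes $(-1)^{\dim C}\delta(C)$. When $\dim C$ is odd this equals $-\delta(C)$, which cancels the leading $\delta(C)$ in the formula for $2 S_{\text{even}}$, leaving $2 S_{\text{even}} = \sum_{F\subset C}(-1)^{\dim F}\delta(F)$. When $\dim C$ is even this equals $+\delta(C)$, which together with the minus sign in the formula for $2 S_{\text{odd}}$ produces $2 S_{\text{odd}} = -\sum_{F\subset C}(-1)^{\dim F}\delta(F)$. Here I use the paper's convention that $F\subset C$ means a proper face.

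There is no substantive obstacle; everything is a direct consequence of Corollary~\ref{thm:1} together with elementary algebra around the parity of $\dim C$. The only mildly delicate point is remembering to peel off the $F=C$ term and tracking signs correctly, which is handled uniformly by the $2\times2$ inversion above.
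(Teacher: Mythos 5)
Your proof is correct and takes essentially the same route the paper intends: it multiplies the identity of Corollary~\ref{thm:1} by $k$, sums over $k$, and recognizes $\delta(F)=\sum_k k\,v_k(F)$, which is exactly the advertised variation of the proof of Corollary~\ref{cor:GB}, and then the parity cases follow by peeling off the $F=C$ term as you do. The sign bookkeeping in both the odd and even cases checks out, so there is nothing to add.
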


\begin{corollary}\label{cor:genfun}
Let $V_C$ be the random variable on $\{0,1,\dots,d\}$ defined by $\Prob\{V_C=k\} = v_k(C)$. The alternating sum of the exponential generating function satisfies
 \begin{equation*}
  \Expect\left[(-1)^{V_C}e^{tV_C}\right] = \sum_{F\in \F(C)} (-1)^{\dim F} \Expect\left[e^{tV_F}\right] .
 \end{equation*}
\end{corollary}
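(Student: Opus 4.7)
The plan is to reduce this to Corollary~\ref{thm:1} by linearity, essentially mimicking the derivation of Corollary~\ref{cor:GB} and the preceding corollary about moments. The left-hand side is a finite sum over $k$, since $V_C$ is supported on $\{0,1,\dots,d\}$, so no convergence issues arise and all sum interchanges are legitimate.

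First I would expand the left-hand side by the definition of the distribution of $V_C$:
\begin{equation*}
  \Expect\left[(-1)^{V_C}e^{tV_C}\right] = \sum_{k=0}^{d} (-1)^k e^{tk}\, v_k(C).
\end{equation*}
Then I would apply Corollary~\ref{thm:1} termwise to substitute $(-1)^k v_k(C) = \sum_{F\in \F(C)} (-1)^{\dim F} v_k(F)$, giving
\begin{equation*}
  \sum_{k=0}^{d} (-1)^k e^{tk}\, v_k(C) = \sum_{k=0}^{d} e^{tk} \sum_{F\in \F(C)} (-1)^{\dim F}\, v_k(F).
\end{equation*}
Finally, I would swap the order of summation (both sums are finite) and recognize the inner sum as the generating function of $V_F$:
\begin{equation*}
  \sum_{F\in \F(C)} (-1)^{\dim F} \sum_{k=0}^{d} e^{tk}\, v_k(F) = \sum_{F\in \F(C)} (-1)^{\dim F}\, \Expect\left[e^{tV_F}\right],
\end{equation*}
which is the desired identity.

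There is essentially no obstacle here: the whole content of the statement is encoded in Corollary~\ref{thm:1}, and the only thing one does is multiply by the test function $k\mapsto e^{tk}$ and sum. The only minor subtlety worth flagging is that in the inner sum $\sum_{k=0}^d e^{tk} v_k(F)$ one uses that $v_k(F)=0$ whenever $k>\dim F$, so that the full range of $k$ does yield $\Expect[e^{tV_F}]$ with $V_F$ supported on $\{0,1,\dots,\dim F\}\subseteq\{0,1,\dots,d\}$. Since this is immediate, I would keep the proof to a few lines.
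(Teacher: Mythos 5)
Your argument is correct and is exactly the route the paper intends: the paper states that Corollary~\ref{cor:genfun} follows by a variation of the proof of Corollary~\ref{cor:GB}, namely weighting the identity~\eqref{eq:gbl} of Corollary~\ref{thm:1} by the test function $k\mapsto e^{tk}$ and summing over $k$, which is precisely what you do. The finite-sum interchange and the remark that $v_k(F)=0$ for $k>\dim F$ are the only points to note, and you handle both.
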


\begin{remark}
The Gauss-Bonnet relation can also be written out as $\sum_{F\in\mF(C)} (-1)^{\dim F}v_F(C)=0$, if~$C$ is not a linear subspace. If $G\in\mF(C)$ is a proper face, i.e., $G\neq C$, then one can apply Gauss-Bonnet to the projected cone $C/G$, as in the deduction of Theorem~\ref{thm:2} from Sommerville's Theorem~\ref{thm:somm}, to obtain
  \[ \sum_{F\in\mF(C)} (-1)^{\dim F} v_{F/G}(C/G) = 0 . \]
Rewriting this formula in terms of internal/external angles, and extending this to include also the case $G=C$, one obtains
  \[ \sum_{G\leq F\leq C} (-1)^{\dim F-\dim G} \beta(G,F)\,\gamma(F,C) = \begin{cases} 1 & \text{if } F=G \\ 0 & \text{else}, \end{cases} \]
where $\leq$ denotes the order relation in the face lattice, i.e., the inclusion relation.
In~\cite{mcmullen1975non} McMullen observed that this relation means that the internal and external angle functions (one of them multiplied by the M\"obius function) are mutual inverses in the incidence algebra of the face lattice, cf.~Section~\ref{sec:posets}. More precisely, the Gauss-Bonnet relation only shows that one of them is the left-inverse of the other (and of course the other is a right-inverse of the first), but since left-inverse, right-inverse, or two-sided inverse are equivalent in the incidence algebra~\cite[Prop.~3.6.3]{EC} one obtains the following additional relation ``for free'':
  \[ \sum_{G\leq F\leq C} (-1)^{\dim C-\dim F} \gamma(G,F)\,\beta(F,C) = \begin{cases} 1 & \text{if } F=G \\ 0 & \text{else} . \end{cases} \]
This is~\cite[Thm.~3]{mcmullen1975non}.
\end{remark}

The relation~\eqref{eq:11} used in the proof of Sommerville's Theorem~\ref{thm:somm} is a special case of the principal kinematic formula, to be derived in more detail next.

\section{Elementary kinematics for polyhedral cones}\label{sec:kin-form}

The principal kinematic formulae of integral geometry relate the intrinsic volumes, or certain measures that localize these quantities, of the intersection of two or more randomly moved geometric objects to those of the individual objects. This section presents a direct derivation of the principal kinematic formula in the setting of two polyhedral cones. The results of this section are special cases of Glasauer's Kinematic Formula for spherically convex sets~\cite{Gl,scwe:08}, though in the spirit of the rest of this article, our proof is combinatorial, based on the facial decomposition of the cone, and uses probabilistic terminology.

In what follows, when we say that $\mtx{Q}$ is drawn uniformly at random from the orthogonal group $O(d)$, we mean that it is drawn from the Haar probability measure $\nu$ on $O(d)$. This is the unique regular Borel measure on $O(d)$ that is left and right invariant ($\nu(\vct{Q}A)=\nu(A\vct{Q})=\nu(A)$ for $\vct{Q}\in O(d)$ and a Borel measurable $A\subseteq O(d)$) and satisfies $\nu(O(d))=1$. Moreover, for measurable $f\colon O(d)\to \R_+$, we write
\begin{equation*}
 \Expect_{\vct{Q}\in O(d)}[f(\vct{Q})] := \int_{\vct{Q}\in O(d)} f(\vct{Q}) \ \nu(\diff{\mtx{Q}})
\end{equation*}
for the integral with respect to the Haar probability measure, and we will occasionally omit the subscript $\vct{Q}\in O(d)$, or just write $\mtx{Q}$ in the subscript, when there is no ambiguity. 
More information on invariant measures in the context of integral geometry can be found in~\cite[Chapter 13]{scwe:08}.

\begin{theorem}[Kinematic Formula]\label{thm:kinematic}
Let $C,D\subseteq \R^d$ be polyhedral cones.
Then, for $\vct Q\in O(d)$ uniformly at random, and $k>0$,
\begin{align}
\label{eq:kinematic}
  \Expect[v_k(C\cap \vct QD)] & = v_{k+d}(C\times D) , & \Expect[v_0(C\cap \vct QD)] & = \sum_{j=0}^d v_j(C\times D) .
\intertext{If $D=L$ is a linear subspace of dimension $d-m$, then}
\label{eq:crofton}
  \Expect[v_k(C\cap \vct QL)] & = v_{k+m}(C) , & \Expect[v_0(C\cap \vct QL)] & = \sum_{j=0}^m v_j(C) .
\end{align}
\end{theorem}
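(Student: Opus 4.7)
The plan is to establish the general kinematic formula~\eqref{eq:kinematic} via a facial decomposition of $C \cap \vct Q D$, and then to deduce the Crofton formula~\eqref{eq:crofton} as the special case $D = L$.

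For Haar-almost every $\vct Q \in O(d)$, every positive-dimensional face of $C \cap \vct Q D$ can be written uniquely as $F \cap \vct Q G$ for some pair $(F, G) \in \F(C) \times \F(D)$ with $F \pitchfork \vct Q G$; this combines Proposition~\ref{le:dualintersect} (every face is of this form) with a generic dimension count via Farkas' Lemma~\ref{le:farkas} (non-transverse pairs have empty relative-interior intersection and so do not contribute above the apex). Hence, for $k > 0$, using~\eqref{eq:intvolintersect} and Corollary~\ref{cor:dualintersect},
\begin{equation*}
v_k(C \cap \vct Q D) = \sum_{\substack{(F, G) \in \F(C) \times \F(D) \\ \dim F + \dim G = k + d,\ F \pitchfork \vct Q G}} v_k(F \cap \vct Q G) \cdot v_{d-k}(N_F C + \vct Q N_G D).
\end{equation*}
On the other hand, the faces of the product $C \times D$ are products $F \times G$ with $N_{F \times G}(C \times D) = N_F C \times N_G D$, so $v_{F \times G}(C \times D) = v_F(C)\, v_G(D)$, giving
\begin{equation*}
v_{k+d}(C \times D) = \sum_{\dim F + \dim G = k + d} v_F(C)\, v_G(D).
\end{equation*}

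Matching term by term reduces the theorem to the following \emph{key identity}, which is the main technical obstacle: for each pair $(F, G)$ with $\dim F + \dim G = k + d$,
\begin{equation*}
\Expect_{\vct Q}\big[v_k(F \cap \vct Q G)\, v_{d-k}(N_F C + \vct Q N_G D)\, \mathbf{1}_{F \pitchfork \vct Q G}\big] = v_F(C)\, v_G(D).
\end{equation*}
I would prove this by passing to the Gaussian probabilistic interpretation from Section~\ref{sec:intr-vols}. Using the Moreau decomposition~\eqref{eq:moreau} together with the orthogonal complementarity $(\lin F \cap \vct Q \lin G)^{\perp} = \lin F^{\perp} + \vct Q \lin G^{\perp}$, the left-hand side is the joint probability, over Haar $\vct Q$ and a standard Gaussian $\vct g \in \R^d$, of the event $\vct g \in \relint(F \cap \vct Q G) + N_F C + \vct Q N_G D$. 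The right-hand side is the analogous joint probability, over two independent standard Gaussians $\vct g_1, \vct g_2 \in \R^d$, of the events $\vct g_1 \in \relint F + N_F C$ and $\vct g_2 \in \relint G + N_G D$. The equality is then obtained by a change of variables on the product space $O(d) \times \R^d$, exploiting the orthogonal invariance of the Gaussian distribution to decouple the action of $\vct Q$ on the two orthogonally complementary subspaces $\lin F \cap \vct Q \lin G$ and $\lin F^{\perp} + \vct Q \lin G^{\perp}$, thereby factoring the single-Gaussian event into the two independent face-selection events for $\vct g_1$ and $\vct g_2$.

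The $k = 0$ case of~\eqref{eq:kinematic} follows without further work from the $k > 0$ cases together with the normalizations $\sum_{k=0}^{d} v_k(C \cap \vct Q D) = 1$ and $\sum_{j=0}^{2d} v_j(C \times D) = 1$:
\begin{equation*}
\Expect_{\vct Q}[v_0(C \cap \vct Q D)] = 1 - \sum_{k=1}^{d} v_{k+d}(C \times D) = \sum_{j=0}^{d} v_j(C \times D).
\end{equation*}
Finally, for~\eqref{eq:crofton}, set $D = L$: the only face of $L$ is $L$ itself with $N_L L = L^{\perp}$, so the product rule~\eqref{eq:productrule} yields $v_j(C \times L) = v_{j - (d-m)}(C)$, whence $v_{k+d}(C \times L) = v_{k+m}(C)$ and $\sum_{j=0}^{d} v_j(C \times L) = \sum_{j=0}^{m} v_j(C)$, as claimed.
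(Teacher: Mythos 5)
Your reduction is sound and matches the paper's strategy step for step: the generic facial decomposition of $C\cap\vct QD$, the identification of the key identity $\Expect_{\vct Q}[v_k(F\cap\vct QG)\,v_{d-k}(N_FC+\vct QN_GD)\,1_{\{F\pitchfork\vct QG\}}]=v_F(C)\,v_G(D)$ (this is exactly the paper's~\eqref{eq:pre-crofton}), the derivation of the $k=0$ case from $\sum_k v_k=1$, and the specialization to $D=L$ via the product rule are all correct. The problem is that the key identity itself --- which you rightly call the main technical obstacle, and which is where essentially all the work in the paper lies --- is dispatched in one sentence. The decomposition you invoke, splitting the single Gaussian $\vct g$ along $\lin F\cap\vct Q\lin G$ and its orthogonal complement $\lin F^\perp+\vct Q\lin G^\perp$, only reproduces the factorization \emph{conditional on $\vct Q$}, i.e.\ $v_{F\cap\vct QG}(C\cap\vct QD)=v_k(F\cap\vct QG)\,v_{d-k}(N_FC+\vct QN_GD)$, which is~\eqref{eq:intvolintersect} and is already built into your display. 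The actual difficulty is that both factors still depend on $\vct Q$, and ``a change of variables on $O(d)\times\R^d$ exploiting orthogonal invariance'' does not explain why the Haar average of their product splits into $v_j(F)\,v_\ell(G)\,v_{d-j}(N_FC)\,v_{d-\ell}(N_GD)$. In the paper this is done by averaging over the stabilizer subgroups: replacing $\vct Q$ by $\vct Q_0\vct Q$ with $\vct Q_0\in O(L_F)$ uniform (which moves $F$ but fixes $N_FC\subseteq L_F^\perp$), applying Fubini and~\eqref{eq:Qgauss} to extract $v_j(F)$, repeating with $O(L_G)$, and then handling the remaining term $\Expect_{\vct Q}[v_{d-k}(N_FC+\vct QN_GD)]$ by a separate dual pre-kinematic formula~\eqref{eq:pre-kinematic-dual} from Lemma~\ref{lem:prekinematic}.

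That last term is also where your sketch would actively run into trouble if pushed: for the Minkowski sum $N_FC+\vct QN_GD$ the relevant decomposition of $\vct g$ into components along $L_F^\perp$, $\vct QL_G^\perp$ and $(L_F^\perp+\vct QL_G^\perp)^\perp$ is \emph{not} an orthogonal decomposition (the two spans are generically non-orthogonal, and even the component in $L_F^\perp$ depends on $\vct Q$), so one cannot directly ``factor the single-Gaussian event into two independent Gaussian face-selection events'' by orthogonal invariance alone; the paper's proof of~\eqref{eq:pre-kinematic-dual} has to track this non-orthogonal decomposition carefully and again uses the stabilizer-averaging trick, exploiting that $\vct Q_0\in O(L_C)$ acts as the identity on $L_C^\perp$ and hence respects the decomposition. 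Until you supply an argument at this level of precision for the key identity (and, as a minor point, justify measurability/integrability of $\vct Q\mapsto v_k(C\cap\vct QD)$, which the theorem implicitly asserts), the proposal is an accurate outline of the paper's proof rather than a proof.
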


Implicit in the statement of the theorem is the integrability of $v_k(C\cap \mtx{Q}D)$ as a function of $\mtx{Q}$. This will be established in the proof.
Recall that the intrinsic volumes of $C\times D$ are obtained by convoluting the intrinsic volumes of~$C$ and~$D$, cf.~Section~\ref{sec:intr-vols}. The second equation in~\eqref{eq:kinematic} follows from the first and from~$\sum_k v_k(C)=1$, and statement~\eqref{eq:crofton} follows from~\eqref{eq:kinematic} by applying the product rule~\eqref{eq:productrule}. Note also that using polarity (Proposition~\ref{le:dualintersect}) on both sides of~\eqref{eq:kinematic} we obtain the polar kinematic formulas
\begin{align}\label{eq:kinematic-polar}
  \Expect[v_{d-k}(C+\vct QD)] & = v_{d-k}(C\times D) , & \Expect[v_d(C+\vct QD)] & = \sum_{j=0}^d v_{d+j}(C\times D) ,
\end{align}
and similarly for~\eqref{eq:crofton}. Combining Theorem~\ref{thm:kinematic} with the Gauss-Bonnet relation~\eqref{eq:gaussbonnet} yields the so-called {\em Crofton formulas}, which we formulate in the following corollary. They relate the Grassmann angles (see Section~\ref{sec:grassm-ang}) to the intrinsic volumes.

\begin{corollary}\label{cor:Croft}
Let $C,D\subseteq \R^{d}$ be polyhedral cones such that not both of $C$ and $D$ are linear subspaces. Then, for $\vct Q\in O(d)$ uniformly at random,
\begin{align*}
  \Prob\{C\cap \vct QD \neq \zerovct\} & = 2\sum_{i\geq1 \text{ odd}} v_{d+i}(C\times D).
\intertext{In particular, if $D=L$ is a linear subspace of dimension $d-m$,}
  \Prob\{C\cap \vct QL \neq \zerovct\} & = 2\sum_{i\geq1 \text{ odd}} v_{m+i}(C).
\end{align*}
\end{corollary}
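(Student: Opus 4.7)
The plan is to combine the Kinematic Formula (Theorem~\ref{thm:kinematic}) with the Gauss-Bonnet relation (Corollary~\ref{cor:GB}) applied to the random intersection cone $C\cap \vct QD$. Since Gauss-Bonnet distinguishes between linear subspaces and non-subspaces, the first task is to establish a genericity statement: for $\nu$-almost every $\vct Q\in O(d)$, the intersection $C\cap\vct QD$ is either $\vct 0$ or is not a (nontrivial) linear subspace. This is where the hypothesis that not both of $C,D$ are linear subspaces enters.

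For the genericity claim, I would argue as follows. The lineality space of $C\cap\vct QD$ is $L_C\cap\vct Q L_D$ where $L_C=C\cap(-C)$ and $L_D=D\cap(-D)$. By a standard transversality argument, for a.e.\ $\vct Q$ the faces of $C$ and $\vct QD$ intersect transversely in the sense preceding Corollary~\ref{cor:dualintersect}; in particular, either $\dim C+\dim D<d$ (in which case $C\cap\vct QD=\vct 0$ a.s.) or $\dim(C\cap\vct QD)=\dim C+\dim D-d$ and $\dim(L_C\cap\vct QL_D)=\max(0,\dim L_C+\dim L_D-d)$. For $C\cap\vct QD$ to coincide with its lineality space one would need $\dim C=\dim L_C$ and $\dim D=\dim L_D$, i.e.\ both cones linear subspaces, contradicting the hypothesis. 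This is the step I expect to require the most care in a rigorous write-up.

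Granted this, the Gauss-Bonnet relation~\eqref{eq:gaussbonnet} gives almost surely
\begin{equation*}
  \sum_{i=0}^d (-1)^i v_i(C\cap\vct QD) = \mathbf{1}_{\{C\cap\vct QD=\vct 0\}}.
\end{equation*}
Taking expectation over $\vct Q$ and applying both parts of~\eqref{eq:kinematic}, the $i=0$ term contributes $\sum_{j=0}^d v_j(C\times D)$ while for $i\geq 1$ each term contributes $(-1)^i v_{i+d}(C\times D)$. Setting $k=i+d$ this yields
\begin{equation*}
  \Prob\{C\cap\vct QD=\vct 0\} = \sum_{j=0}^d v_j(C\times D) + \sum_{k=d+1}^{2d} (-1)^{k-d} v_k(C\times D).
\end{equation*}

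To finish, I would use that the intrinsic volumes sum to one, $\sum_{k=0}^{2d} v_k(C\times D)=1$, to substitute $\sum_{j=0}^d v_j(C\times D) = 1 - \sum_{k=d+1}^{2d} v_k(C\times D)$. Passing to the complementary probability and combining the two sums term by term gives
\begin{equation*}
  \Prob\{C\cap\vct QD\neq\vct 0\} = \sum_{k=d+1}^{2d}\bigl[1-(-1)^{k-d}\bigr]\,v_k(C\times D) = 2\sum_{\substack{i\geq 1\\ i\text{ odd}}} v_{d+i}(C\times D),
\end{equation*}
as desired. The second formula follows immediately from the first via the product rule~\eqref{eq:productrule}, since for a linear subspace $L$ of dimension $d-m$ one has $v_k(C\times L)=v_{k-(d-m)}(C)$, and hence $v_{d+i}(C\times L)=v_{m+i}(C)$.
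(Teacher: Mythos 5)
Your proposal is correct and takes essentially the same route as the paper: Gauss--Bonnet applied to $C\cap\vct QD$, a genericity statement, and the kinematic formula; the transversality step you flag as delicate is precisely the paper's Lemma~\ref{lem:gen-inters}, which is proved there (immediately before this corollary) and can simply be cited rather than re-derived. The only cosmetic difference is bookkeeping: the paper substitutes $v_0(C\cap\vct QD)=1-\sum_{i\geq1}v_i(C\cap\vct QD)$ before applying the first equation of~\eqref{eq:kinematic}, whereas you apply both equations of~\eqref{eq:kinematic} and then simplify using $\sum_k v_k(C\times D)=1$.
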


For the derivation of this corollary, and for later use, we need the following genericity lemma. Recall from Section~\ref{sec:duality} that two cones $C,D\subseteq \R^d$ are said to intersect transversely, written $C\pitchfork D$, if $\relint(C)\cap \relint(D)\neq \emptyset$ and $\dim(C\cap D) = \dim(C)+\dim(D)-d$. For the rest of this section, we use the notation $L_C:=\lin(C)=C+(-C)$ for the linear span of a convex cone $C$.

\begin{lemma}\label{lem:gen-inters}
Let $C,D\subseteq \R^{d}$ polyhedral cones. Then for $\vct Q\in O(d)$ uniformly at random, almost surely either $C\cap \vct{Q}D=\vct0$ or $C\pitchfork \mtx{Q}D$ holds. In particular, if not both of $C$ and $D$ are linear subspaces, then almost surely either $C\cap \vct{Q}D=\vct0$, or $C\cap \mtx{Q}D$ is not a linear subspace.
\end{lemma}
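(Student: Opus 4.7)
My plan combines two applications of the classical transversality of random subspaces---one for pairs of faces of $(C,D)$, one for pairs of their normal (polar) faces---with the conic separating hyperplane theorem (Theorem~\ref{thm:sephyp}). The key general fact I use is that for Haar-uniform $\vct{Q}\in O(d)$ and any two fixed linear subspaces $U,V\subseteq\R^d$, the rotated subspace $\vct{Q}V$ is uniformly distributed on the Grassmannian $\Gr(\dim V,d)$; in particular $\dim(U\cap\vct{Q}V)=\max(0,\dim U+\dim V-d)$ almost surely, because the non-transverse locus in $\Gr(\dim V,d)$ is a proper subvariety.

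I would apply this to the finitely many face-span pairs $(L_F,L_G)$ with $F\in\mF(C),G\in\mF(D)$, and also to the finitely many normal-face-span pairs $(L_{N_FC},L_{N_GD})$; the latter are exactly the face-span pairs of the polar cones $C^\polar,D^\polar$. Intersecting the complements of all these null events yields a full-measure set $\mathcal{G}\subseteq O(d)$ on which, for every pair $(F,G)\in\mF(C)\times\mF(D)$,
\begin{equation*}
\dim(L_F\cap\vct{Q}L_G)=\max(0,\dim F+\dim G-d),\qquad\dim(L_{N_FC}\cap\vct{Q}L_{N_GD})=\max(0,d-\dim F-\dim G),
\end{equation*}
where I have used the identities $\dim N_FC=d-\dim F$ and $\dim N_GD=d-\dim G$.

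Fix $\vct{Q}\in\mathcal{G}$ and assume $C\cap\vct{Q}D\neq\vct{0}$; I argue by contradiction that $C\pitchfork\vct{Q}D$. A short neighborhood argument inside $L_C\cap\vct{Q}L_D$ shows that $\relint(C)\cap\vct{Q}\relint(D)\neq\emptyset$ already forces the dimension identity $\dim(C\cap\vct{Q}D)=\dim C+\dim D-d$, so any failure of transversality must take the form $\relint(C)\cap\vct{Q}\relint(D)=\emptyset$. Theorem~\ref{thm:sephyp} then yields $\vct{h}\neq\vct{0}$ with $\vct{h}\in C^\polar$ and $-\vct{h}\in\vct{Q}D^\polar$. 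Let $F\in\mF(C)$ be the unique face with $\vct{h}\in\relint(N_FC)$ and $G\in\mF(D)$ the unique face with $-\vct{h}\in\relint(\vct{Q}N_GD)$; then $F\cap\vct{Q}G\supseteq C\cap\vct{Q}D\neq\vct{0}$ and $\vct{h}\in N_FC\cap(-\vct{Q}N_GD)$ is nonzero. The primal identity in $\mathcal{G}$ applied to $(F,G)$ forces $\dim F+\dim G\geq d+1$, while the dual identity applied to $(N_FC,N_GD)$ forces $d-\dim F-\dim G\geq 1$, i.e.\ $\dim F+\dim G\leq d-1$. These are incompatible, yielding the desired contradiction.

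The ``in particular'' assertion is similar. Assume without loss of generality that $C$ is not a linear subspace, so the lineality face $\hat{L}_C:=C\cap(-C)\in\mF(C)$ satisfies $\dim\hat{L}_C<\dim C$. If $C\cap\vct{Q}D\neq\vct{0}$, the main claim gives $\dim(C\cap\vct{Q}D)=\dim C+\dim D-d=:k\geq 1$. Were $C\cap\vct{Q}D$ a linear subspace, it would be contained in both lineality faces $\hat{L}_C$ and $\vct{Q}\hat{L}_D$, but transversality in $\mathcal{G}$ applied to the pair $(\hat{L}_C,\hat{L}_D)$ gives $\dim(\hat{L}_C\cap\vct{Q}\hat{L}_D)\leq\dim\hat{L}_C+\dim\hat{L}_D-d<k$, contradicting the supposed equality. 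The main obstacle in carrying out this plan is identifying the right genericity condition---combining primal and dual face-span transversality at once---after which the separating hyperplane dichotomy closes the argument cleanly.
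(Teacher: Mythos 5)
Your proof is correct, and while it shares the paper's overall skeleton (a full-measure set of $\vct Q$ on which all face-span pairs intersect generically, followed by the dichotomy between $\relint(C)\cap\relint(\vct QD)\neq\emptyset$ and separation via Theorem~\ref{thm:sephyp}), it closes the separation case by a different mechanism. The paper takes $F=C\cap H$ and $G=D\cap\vct Q^{T}H$ to be the faces cut out by the separating hyperplane $H$ and gets its contradiction purely on the primal side: since $L_F$ and $\vct Q L_G$ both lie in the $(d-1)$-dimensional $H$ and $F\cap\vct QG\supseteq C\cap\vct QD\neq\vct0$, the intersection of the spans is too large for a generic $\vct Q$; no genericity for polar data is needed. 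You instead also impose transversality for the normal-face spans (equivalently, for the face spans of $C^\polar$ and $D^\polar$) and play the primal bound $\dim F+\dim G\geq d+1$ against the dual bound $\dim F+\dim G\leq d-1$ --- a clean symmetric primal--dual dimension count, at the cost of a larger (still finite, still null-complement) genericity set. Two small points. First, your step ``$F\cap\vct QG\supseteq C\cap\vct QD$'' silently uses the standard fact that $\vct h\in\relint(N_FC)$ forces $C\cap\vct h^{\perp}=F$ (it follows from $N_{F^\diamond}(C^\polar)=F$, but deserves a line); it can be bypassed entirely by defining $F:=C\cap\vct h^{\perp}$ and $G:=D\cap(\vct Q^{T}\vct h)^{\perp}$, since then $\vct h\in N_FC$ and $-\vct h\in\vct QN_GD$ hold automatically --- which is essentially the paper's choice. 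Second, in the last step the inequality should read $\dim(\hat L_C\cap\vct Q\hat L_D)=\max(0,\dim\hat L_C+\dim\hat L_D-d)<k$, the strict inequality holding because $k\geq1$; as literally written your bound fails when $\dim\hat L_C+\dim\hat L_D<d$, though the conclusion is unaffected. Your lineality-face dimension count for the ``in particular'' claim also differs from the paper, which instead picks a nonzero point of $\relint(C)\cap\vct QD$ and observes it cannot lie in the lineality space of $C\cap\vct QD$; both arguments are sound.
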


\begin{proof}
 The 
set $\mathcal{S}$ of $\mtx{Q}\in O(d)$ with $\dim L_F\cap \mtx{Q}L_G\neq \max\{0,\dim L_F+\dim L_G-d\}$ for some $(F,G)\in \mathcal{F}(C)\times \mathcal{F}(D)$ has measure zero, see for example~\cite[Lemma 13.2.1]{scwe:08}. 

Assume $\vct Q\not\in \mathcal{S}$ and
$C\cap \mtx{Q}D\neq \zerovct$. If $\relint(C)\cap \relint(\mtx{Q}D)\neq \emptyset$, then $\mtx{Q}\not\in \mathcal{S}$
implies $\dim C\cap \mtx{Q}D=\dim C+\dim D-d$, and hence $C\pitchfork \mtx{Q}D$.
If $\relint(C)\cap \relint(\mtx{Q}D)=\emptyset$, then by the Separating Hyperplane Theorem~\ref{thm:sephyp} there exists a hyperplane $H$ such that $C\subseteq H_+$ and $\mtx{Q}D\subseteq H_-$. Let $F=C\cap H$ and $G=D\cap \mtx{Q}^{T}H$. By the assumption $C\cap \mtx{Q}D\neq \zerovct$, we have $F\neq \zerovct$ and $G\neq \zerovct$. Since $L_F$ and $\mtx{Q}L_G$ are in $H$ and $\dim H=d-1$, we get $\dim L_F\cap \mtx{Q}L_G \geq \dim L_F+\dim L_G-d+1$ and therefore $\mtx{Q}\in \mathcal{S}$, which contradicts our assumption.
We thus established 
$\{\mtx{Q}\in O(d)\mid C\cap \mtx{Q}D\neq \zerovct \text{ and } C\not\pitchfork \mtx{Q}D\}\subseteq\mathcal{S}$.

For the second claim, assume that $C$ is not a linear subspace. The lineality space of $C$, $C\cap (-C)$, is contained in every supporting hyperplane of $C$, and therefore does not intersect $\relint(C)$. If $C\pitchfork \vct QD$, then there exists nonzero $\vct x\in\relint(C)\cap \mtx{Q}D$. In particular, $\vct x$ does not lie in the lineality space of $C$.
Since the lineality space of the intersection $C\cap\vct QD$ is the intersection of the lineality spaces of $C$ and of $\vct QD$, it follows that $\vct{x}$ is in the complement of the lineality space of $C\cap\vct QD$ in $C\cap\vct QD$, which shows that $C\cap \mtx{Q}D$ is not a linear subspace. 
\end{proof}

\begin{proof}[Proof of Corollary~\ref{cor:Croft}]
Denoting $\chi(C):=\sum_{i=0}^d (-1)^i v_i(C)$, the Gauss-Bonnet relation~\eqref{eq:gaussbonnet} says that $\chi(C)=0$ if $C$ is not a linear subspace, and $\chi(\vct0)=1$. By Lemma~\ref{lem:gen-inters} we see that~$\chi$ is almost surely the indicator function for the event that $C$ and $D$ only intersect at the origin. We can therefore conclude,
\begin{align*}
  \Prob\{C\cap \vct QD = \zerovct\} &= \Expect[\chi(C\cap \vct QD)] = \Expect[\hspace{-2mm}\underbrace{v_0(C\cap \vct QD)}_{=1-\sum_{i=1}^d v_i(C\cap \vct QD)}\hspace{-2mm}]+\sum_{i=1}^d (-1)^i \Expect[v_{i}(C\cap \vct QD)]
\\ & = 1-2\sum_{i\geq1 \text{ odd}} \Expect[v_i(C\cap \vct QD)] \stackrel{~\eqref{eq:kinematic}}{=} 1-2\sum_{i\geq1 \text{ odd}} v_{d+i}(C\times D). 
\end{align*}
The second claim follows by replacing $D$ with $L$.
\end{proof}


Our proof of Theorem~\ref{thm:kinematic} is based on a classic ``double counting'' argument; to illustrate this, we first consider an analogous situation with finite sets. We note that Proposition~\ref{thm:kinematic1} generalizes without difficulties to the setting of compact groups acting on topological spaces, as in~\cite[Theorem 13.1.4]{scwe:08}.

\begin{proposition}\label{thm:kinematic1}
 Let $\Omega$ be a finite set and $G$ be a finite group acting transitively on $\Omega$. Let $M,N\subseteq \Omega$ be subsets. Then for uniformly random $\gamma\in G$,
 \begin{equation}\label{eq:kin-form-finite}
  \Expect_{\gamma\in G}|M\cap \gamma N| = \frac{|M||N|}{|\Omega|}.
 \end{equation}
\end{proposition}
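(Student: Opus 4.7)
The plan is to use a double-counting argument combined with the orbit-stabilizer theorem, which is the finite analogue of invariance of Haar measure under the action. I would set up the computation by linearity:
\[
  \Expect_{\gamma\in G}|M\cap \gamma N| = \frac{1}{|G|}\sum_{\gamma\in G}\sum_{x\in M} \mathbf{1}[x\in \gamma N] = \frac{1}{|G|}\sum_{x\in M}\bigl|\{\gamma\in G\mid \gamma^{-1}x\in N\}\bigr|,
\]
where I have swapped the two finite sums.

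Next I would evaluate the inner cardinality. For each fixed $x\in M$, rewrite the condition $\gamma^{-1}x\in N$ as $\gamma y=x$ for some $y\in N$, giving
\[
  |\{\gamma\in G\mid \gamma^{-1}x\in N\}| = \sum_{y\in N} |\{\gamma\in G\mid \gamma y=x\}|.
\]
Because $G$ acts transitively on $\Omega$, for every pair $(x,y)$ the set $\{\gamma\in G\mid \gamma y=x\}$ is a coset of $\stab(y)$, and by the orbit-stabilizer theorem $|\stab(y)|=|G|/|\Omega|$, which is independent of~$y$. Hence the displayed sum equals $|N|\cdot|G|/|\Omega|$.

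Substituting back gives
\[
  \Expect_{\gamma\in G}|M\cap \gamma N| = \frac{1}{|G|}\cdot|M|\cdot\frac{|N|\,|G|}{|\Omega|} = \frac{|M|\,|N|}{|\Omega|},
\]
which is~\eqref{eq:kin-form-finite}. There is no real obstacle: the only point to be careful about is the use of transitivity, which is exactly what ensures that $|\stab(y)|$ is the same constant $|G|/|\Omega|$ for every $y$, so that the inner sum over $y\in N$ is independent of~$x$. This is the finite mirror of the bi-invariance of the Haar measure that will be used in the proof of Theorem~\ref{thm:kinematic}.
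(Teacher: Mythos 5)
Your proof is correct and follows essentially the same double-counting argument as the paper: you swap the sums over $\gamma$ and $x\in M$ and use transitivity to show the inner count equals $|N|\,|G|/|\Omega|$, which is precisely the paper's key observation that $\Expect_{\gamma\in G}[1_N(\gamma^{-1}x)]=|N|/|\Omega|$ for every $x\in\Omega$. The only difference is cosmetic: you justify this step explicitly via the orbit-stabilizer theorem, while the paper phrases the same exchange probabilistically (uniform $\xi\in\Omega$ and uncorrelated indicator variables).
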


\begin{proof}
Taking $\xi\in\Omega$ uniformly at random, we obtain the cardinality of $M$ as $|\Omega|\cdot\Prob\{\xi\in M\}$. Introduce the indicator function $1_M(\xi)$ for the event $\xi\in M$ and note that $1_{\gamma N}(x)=1_N(\gamma^{-1}x)$ and $\Expect_{\gamma\in G}[1_N(\gamma^{-1}x)] = |N|/|\Omega|$ for any $x\in\Omega$. It follows that the random variables $1_M(\xi),1_{\gamma N}(\xi)$ are uncorrelated:
\begin{align*}
   \Expect_{\gamma\in G}|M\cap \gamma N| & = |\Omega|\cdot\Expect_{\gamma\in G}[ \Expect_{\xi\in \Omega}[1_M(\xi)\, 1_{\gamma N}(\xi)]] = |\Omega|\cdot\Expect_{\xi\in \Omega}[ 1_M(\xi)\, \Expect_{\gamma\in G}[1_N(\gamma^{-1}\xi)]]
\\ & = \Expect_{\xi\in \Omega}[ 1_M(\xi)] \cdot |N| = \frac{|M||N|}{|\Omega|} . \qedhere
\end{align*}
\end{proof}

Lemma~\ref{lem:prekinematic} uses the same idea to establish the kinematic formula for the Gaussian measure of cones of different dimensions, and Theorem~\ref{thm:kinematic} then follows by applying this to the pairwise intersection of faces.

\begin{lemma}\label{lem:prekinematic}
 Let $C,D\subseteq \R^d$ be polyhedral cones with $\dim C=j$ and $\dim D=\ell$, and assume $0<k\leq d$. If $j+\ell=k+d$, then for $\mtx{Q}\in O(d)$ uniformly at random,
 \begin{equation}\label{eq:pre-kinematic}
  \Expect[v_k(C\cap \vct QD)] = v_j(C)\,v_\ell(D) .
\end{equation}
If $j+\ell = d-k$, then for $\mtx{Q}\in O(d)$ uniformly at random,
\begin{equation}\label{eq:pre-kinematic-dual}
   \Expect[v_{d-k}(C+ \vct QD)] = v_{j}(C)\,v_{\ell}(D).
\end{equation}
\end{lemma}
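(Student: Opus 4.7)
I plan to prove \eqref{eq:pre-kinematic}; the dual equation \eqref{eq:pre-kinematic-dual} follows by the completely analogous argument in which the intersection $C\cap\vct QD$ is replaced by the Minkowski sum $C+\vct QD$ and the transversality condition $L_C+\vct QL_D=\R^d$ by its dual $L_C\cap\vct QL_D=\vct 0$.

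The main tool is the continuous analogue of Proposition~\ref{thm:kinematic1}: for any Borel-measurable sets $A,B\subseteq\R^d$ and a standard Gaussian vector $\vct g$,
\[
   \Expect_{\vct Q\in O(d)}\Prob\{\vct g\in A\cap \vct QB\}=\Prob\{\vct g\in A\}\cdot\Prob\{\vct g\in B\},
\]
which follows from Fubini's theorem and the $O(d)$-invariance of the Gaussian measure by the same calculation as in the finite setting. I apply this with $A:=\relint(C)+L_C^\perp$ and $B:=\relint(D)+L_D^\perp$, writing $L_C:=\lin(C)$. The Moreau decomposition \eqref{eq:moreau} shows that $\vct g\in A$ if and only if $\Proj_C(\vct g)\in\relint C$, so $\Prob\{\vct g\in A\}=v_C(C)=v_j(C)$; similarly $\Prob\{\vct g\in B\}=v_\ell(D)$. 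Hence
\[
   \Expect_{\vct Q}\Prob\{\vct g\in A\cap \vct QB\}=v_j(C)\,v_\ell(D).
\]

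To connect this with $\Expect_{\vct Q}v_k(C\cap \vct QD)$, I use Lemma~\ref{lem:gen-inters}: almost surely in $\vct Q$, either $C\cap\vct QD=\vct 0$ (and then $v_k(C\cap\vct QD)=0$ because $k>0$), or $C\pitchfork\vct QD$. In the transverse case, the hypothesis $j+\ell=k+d$ forces $\dim(C\cap\vct QD)=k$; moreover, by Proposition~\ref{le:dualintersect} every $k$-dimensional face is a transverse intersection $F\cap\vct QG$ with $\dim F+\dim G=k+d$, and since $\dim F\le j$ and $\dim G\le\ell$ this forces $F=C$, $G=D$. Corollary~\ref{cor:dualintersect} then gives $N_{C\cap\vct QD}(C\cap\vct QD)=L_C^\perp+\vct QL_D^\perp$, and the Moreau decomposition yields
\[
   v_k(C\cap\vct QD)=\Prob\{\vct g\in\relint(C\cap\vct QD)+L_C^\perp+\vct QL_D^\perp\}.
\]

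The hard part is matching the two resulting $\vct Q$-averages. The events $\{\vct g\in A\cap \vct QB\}$ and $\{\vct g\in\relint(C\cap\vct QD)+L_C^\perp+\vct QL_D^\perp\}$ are generally distinct for individual $\vct Q$ (an elementary $\R^2$ example with $C$ the first quadrant and $D$ a line makes this clear), so the equality must emerge only after averaging. I would handle this by decomposing $\vct g=\vct g_V+\vct g_M$ orthogonally with $V:=L_C\cap\vct QL_D$ and $M:=V^\perp$; under transversality $M=L_C^\perp+\vct QL_D^\perp$, because $L_C^\perp\cap\vct QL_D^\perp=(L_C+\vct QL_D)^\perp=\vct 0$. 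Using the independence of $\vct g_V$ and $\vct g_M$, Fubini, and the $O(d)$-invariance of the Gaussian, both averages should reduce to a common joint integral which equals $v_j(C)\,v_\ell(D)$ by the double-counting identity.
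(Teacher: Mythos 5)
Your reduction is set up correctly, and the ``easy side'' is fine: with $A=\relint(C)+L_C^\perp$ and $B=\relint(D)+L_D^\perp$, the double-counting identity does give $\Expect_{\vct Q}\Prob\{\vct g\in A\cap\vct QB\}=v_j(C)\,v_\ell(D)$ (note, though, that the identity as you state it is false for arbitrary Borel sets --- take $A=B$ a ball centred at the origin; it is valid here only because $A,B$ are positively homogeneous, so that $\Expect_{\vct Q}[1_B(\vct Q^T\vct x)]=\Prob\{\vct g\in B\}$ for every fixed $\vct x\neq\vct 0$). Likewise, via Lemma~\ref{lem:gen-inters} and Corollary~\ref{cor:dualintersect} you correctly identify $v_k(C\cap\vct QD)=\Prob\{\Proj_V(\vct g)\in\relint(C\cap\vct QD)\}$ with $V=L_C\cap\vct QL_D$. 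The genuine gap is your final paragraph, which is precisely where the content of the lemma lies. The orthogonal splitting $\vct g=\vct g_V+\vct g_M$ buys nothing: the event $\{\Proj_V(\vct g)\in\relint(C\cap\vct QD)\}$ depends on $\vct g_V$ alone, so after integrating out $\vct g_M$ you are left with $\Expect_{\vct Q}\Expect_{\vct g\in\mN(V)}\big[1_{\relint C}(\vct g)\,1_{\relint D}(\vct Q^T\vct g)\big]$, i.e.\ exactly the quantity that must be evaluated; and plain $O(d)$-invariance of the Gaussian cannot produce the factorization, because --- as your own $\R^2$ example shows --- for a fixed $\vct Q$ the two probabilities genuinely differ, so the mechanism must act on the average over $\vct Q$, not on $\vct g$. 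The missing idea is the one the paper supplies: by right-invariance of the Haar measure replace $\vct Q$ by $\vct Q\vct Q_0$ with $\vct Q_0\in O(L_D)$ Haar-distributed; this leaves $V=L_C\cap\vct QL_D$ unchanged (since $\vct Q_0L_D=L_D$), while for the point $\vct Q^T\vct g\in L_D$ the inner average $\Expect_{\vct Q_0}[1_D(\vct Q_0^T\vct Q^T\vct g)]$ equals $v_\ell(D)$ by the sphere characterization~\eqref{eq:Qgauss} applied inside $L_D$; a second averaging with $\vct Q\mapsto\vct Q_1\vct Q$, $\vct Q_1\in O(L_C)$, then extracts $v_j(C)$. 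Without this subgroup averaging (or an equivalent disintegration over the $k$-planes inside $L_C$), your ``common joint integral'' is an unsubstantiated hope rather than a proof step.

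A secondary point: the dual identity~\eqref{eq:pre-kinematic-dual} is not a verbatim carbon copy, nor can it be deduced from~\eqref{eq:pre-kinematic} by polarity (for a cone of dimension $j<d$ one has $\dim C^\polar=d-\dim(C\cap(-C))$, so the dimension hypothesis does not transform correctly). In the Minkowski-sum case the relevant event is that the component of $\vct g$ in $L_C+\vct QL_D$ lies in $\relint C+\vct Q\relint D$, and this factors along the unique but \emph{oblique} decomposition $\vct g=\vct g_C+\vct g_D+\vct g^\perp$ with $\vct g_C\in L_C$, $\vct g_D\in\vct QL_D$, $\vct g^\perp\in(L_C+\vct QL_D)^\perp$ --- not along orthogonal projections onto $L_C$ and $\vct QL_D$. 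The paper tracks this explicitly, and any completed version of your argument would have to as well.
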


The proof of Lemma~\ref{lem:prekinematic} relies crucially on the left and right invariance of the Haar measure, which implies that for any measurable $f\colon O(d)\to \R_+$ and fixed $\vct{Q}_0,\vct{Q}_1\in O(d)$,
\begin{equation}\label{eq:orthint}
 \Expect_{\vct{Q}\in O(d)}[f(\vct{Q}\vct{Q}_0)] = \Expect_{\vct{Q}\in O(d)}[f(\vct{Q}_1\vct{Q})] = \Expect_{\vct{Q}\in O(d)}[f(\vct{Q})].
\end{equation} 
For a linear subspace $L\subseteq \R^d$, we can (and will) naturally identify the group $O(L)$ of orthogonal transformations of $L$ with the subgroup of $O(d)$ consisting of those $\vct{Q}\in O(d)$ for which $\vct{Q}\vct{x}=\vct{x}$ for $\vct{x}\in L^{\perp}$. 
The group $O(L)$ carries its own Haar probability measure. We also use the following characterization of the Gaussian volume of a convex cone $C\subseteq \R^d$:
\begin{equation}\label{eq:Qgauss}
 v_d(C) = \Expect_{\mtx{Q}\in O(d)}[1_C(\mtx{Q}\vct{x})],
\end{equation}
where $\vct{x}\neq \zerovct$ arbitrary. This characterization 
follows from the fact that for $\mtx{Q}\in O(d)$ uniformly at random, the point $\mtx{Q}\vct{x}$ is uniformly distributed on the sphere of radius $\|\vct x\|$.

\begin{proof}[Proof of Lemma~\ref{lem:prekinematic}]
For illustration purposes we first prove~\eqref{eq:pre-kinematic} in the case $k=d$, as it is almost a carbon copy of the proof of Proposition~\ref{thm:kinematic1} and~\cite[Theorem 13.1.4]{scwe:08}.
We need to show that
\begin{equation*}
  \Expect_{\vct Q\in O(d)}[v_d(C\cap \vct QD)] = v_d(C)\,v_d(D) .
\end{equation*}
Note that the map $\mtx{Q}\mapsto v_d(C\cap \vct{Q}D)$ is in fact measurable; this follows from the characterization
\begin{equation*}
 v_d(C\cap \mtx{Q}D) = \Expect_{\vct g\sim \mN(\R^d)}[1_{C\cap \mtx{Q}D}(\vct g)] = \Expect_{\vct g\sim \mN(\R^d)}[1_C(\vct g)\,1_{\vct D}(\mtx{Q}^T\vct g)],
\end{equation*}
the measurability of $(\vct{x},\mtx{Q})\mapsto 1_C(\vct{x})1_D(\mtx{Q}^T\vct{x})$, and the fact that the integral $\Expect_{\vct g\sim \mN(\R^d)}[1_C(\vct g)\,1_{D}(\vct{Q}^T\vct g)]$ is then measurable in $\mtx{Q}$, see for example~\cite[Theorem 8.5]{rudin}.
Fubini's Theorem and~\eqref{eq:Qgauss} then yield
\begin{align*}
   \Expect_{\vct Q\in O(d)}[v_d(C\cap \vct QD)] & = \Expect_{\vct Q\in O(d)}[\Expect_{\vct g\sim \mN(\R^d)}[1_C(\vct g)\,1_{\vct QD}(\vct g)]]
\\ & = \Expect_{\vct g\sim \mN(\R^d)}[1_C(\vct g)\,\Expect_{\vct Q\in O(d)}[1_D(\vct Q^T\vct g)]]
\\ & = \Expect_{\vct g\sim \mN(\R^d)}[1_C(\vct g)]\,v_d(D) = v_d(C)\,v_d(D) .
\end{align*}

We proceed with the general case of~\eqref{eq:pre-kinematic}.
%
%
By Lemma~\ref{lem:gen-inters}, almost surely $\dim L_C\cap \vct QL_D = k$ and
 $\dim(C\cap \vct QD)=k$ or $C\cap \vct QD =\zerovct$. For generic $\mtx{Q}$ we can therefore write
  \[ v_k(C\cap \vct QD)=\Expect_{\vct g\in\mN(L_C\cap \vct{Q}L_D)}[1_C(\vct g)\,1_{\vct QD}(\vct g)] =  \Expect_{\vct g\in\mN(L_C\cap \vct{Q}L_D)}[1_C(\vct g)\,1_{D}(\vct{Q}^{T}\vct g)]. \]
We thus need to show that
\begin{equation}\label{eq:weneedtoshow}
 \Expect_{\vct Q\in O(d)}\Expect_{\vct g\in\mN(L_C\cap \vct{Q}L_D)}[1_C(\vct g)\,1_{\vct D}(\vct{Q}^{T}\vct g)] = v_j(C)\,v_\ell(D).
\end{equation}
To see that the map $\mtx{Q}\mapsto v_k(C\cap \mtx{Q}D)$ is measurable, note that, using the fact that the orthogonal projection of a Gaussian vector to a subspace is again Gaussian, we have
\begin{equation*}
 \Expect_{\vct g\in\mN(L_C\cap \vct{Q}L_D)}[1_C(\vct g)\,1_{D}(\vct{Q}^{T}\vct g)] = \Expect_{\vct g\in\mN(\R^d)}[1_C(\Proj_{L_C\cap \vct{Q}L_D}(\vct g))\,1_{D}(\vct{Q}^{T}\Proj_{L_C\cap \vct{Q}L_D}(\vct g))].
\end{equation*}
It is enough to verify that the projection $\Proj_{L_C\cap \mtx{Q}L_D}(\vct{x})$ is continuous in $\vct{x}$ and $\mtx{Q}$ outside a set of measure zero; the measurability of $v_k(C\cap \mtx{Q}D)$ then follows from the same considerations as in the case $k=d$.
If $C\pitchfork \mtx{Q}D$, then $L_C\cap \mtx{Q}L_D$ is the kernel of a matrix of rank $d-k$ whose rows depend continuously on $\mtx{Q}$. The projection $\Proj_{L_C\cap \mtx{Q}L_D}(\vct{x})$ depends continuously on $\vct{x}$ and on this matrix, and therefore also on $\mtx{Q}$.  

We now proceed to show identity~\eqref{eq:weneedtoshow}.
Let $\vct{Q}_0\in O(L_D)$. By the orthogonal invariance~\eqref{eq:orthint},
\begin{align*}
 \Expect_{\vct Q\in O(d)}\Expect_{\vct g\in\mN(L_C\cap \vct{Q}L_D)}[1_C(\vct g)\,1_{D}(\vct{Q}^T\vct g)] 
 &=\Expect_{\vct Q\in O(d)}\Expect_{\vct g\in\mN(L_C\cap \vct{Q}\vct{Q}_0L_D)}[1_C(\vct g)\,1_{D}(\vct{Q}_0^{T}\vct{Q}^{T}\vct g)] .
\end{align*}
Since this holds for any $\vct{Q}_0\in O(L_D)$, we can choose $\vct{Q}_0\in O(L_D)$ uniformly at random to obtain
\begin{align*}
 \Expect_{\vct Q\in O(d)}\Expect_{\vct g\in\mN(L_C\cap \vct{Q}L_D)}[1_C(\vct g)\,1_{\vct QD}(\vct g)] 
 &=\Expect_{\vct{Q}_0\in O(L_D)}\Expect_{\vct Q\in O(d)}\Expect_{\vct g\in\mN(L_C\cap \vct{Q}\vct{Q}_0L_D)}[1_C(\vct g)\,1_{D}(\vct{Q}_0^{T}\vct{Q}^{T}\vct g)]\\
 &\stackrel{(1)}{=}\Expect_{\vct{Q}_0\in O(L_D)}\Expect_{\vct Q\in O(d)}\Expect_{\vct g\in\mN(L_C\cap \vct{Q}L_D)}[1_C(\vct g)\,1_{D}(\vct{Q}_0^{T}\vct{Q}^{T}\vct g)]\\
 & \stackrel{(2)}{=}\Expect_{\vct Q\in O(d)}\Expect_{\vct g\in\mN(L_C\cap \vct{Q}L_D)}[1_C(\vct g)\Expect_{\vct{Q}_0\in O(L_D)}[1_{D}(\vct{Q}_0^{T}\vct{Q}^{T}\vct g)]]\\
 &\stackrel{(3)}{=}\Expect_{\vct Q\in O(d)}\Expect_{\vct g\in\mN(L_C\cap \vct{Q}L_D)}[1_C(\vct g)]\ v_{\ell}(D),
\end{align*}
where in (1) we used $\vct{Q}_0L_D=L_D$, in (2) we used Fubini's Theorem, and in (3) we used~\eqref{eq:Qgauss}. For the remaining part, replacing $\vct{Q}$ with $\vct{Q}_1\vct{Q}$ for $\vct{Q}_1\in O(L_C)$
uniformly at random,
and applying~\eqref{eq:orthint} again,
\begin{align*}
 \Expect_{\vct Q\in O(d)}\Expect_{\vct g\in\mN(L_C\cap \vct{Q}L_D)}[1_C(\vct g)]
 &= \Expect_{\vct{Q}_1\in O(L_C)}\Expect_{\vct Q\in O(d)}\Expect_{\vct g\in\mN(\vct{Q}_1(L_C\cap \vct{Q}L_D))}[1_C(\vct g)]\\
 &= \Expect_{\vct{Q}_1\in O(L_C)}\Expect_{\vct Q\in O(d)}\Expect_{\vct g\in\mN(L_C\cap \vct{Q}L_D)}[1_C(\vct{Q}_1\vct g)]\\
 &= \Expect_{\vct Q\in O(d)}\Expect_{\vct g\in\mN(L_C\cap \vct{Q}L_D)}\Expect_{\vct{Q}_1\in O(L_C)}[1_C(\vct{Q}_1\vct g)]\\
 &= v_j(C),
\end{align*}
where the last equality follows again from~\eqref{eq:Qgauss}.

We now derive~\eqref{eq:pre-kinematic-dual}. By Lemma~\ref{lem:gen-inters}, for generic $\mtx{Q}$, $L_C\cap \mtx{Q}L_D=\zerovct$ and $\dim L_C+\mtx{Q}L_D = j+\ell = d-k$. Using the fact that an orthogonal projection of a Gaussian vector is Gaussian, we get
\begin{align*}
 v_{d-k}(C+\mtx{Q}D) = \Expect_{\mtx{g}\in \mN(L_C+\mtx{Q}L_D)}[1_{C+\mtx{Q}D}(\mtx{g})]
 &= \Expect_{\mtx{g}\in \mN(\R^d)}[1_{C+\mtx{Q}D+(L_C+\mtx{Q}L_D)^{\bot}}(\mtx{g})]
 \\ &= \Expect_{\mtx{g}\in \mN(\R^d)}[1_{C+\mtx{Q}D+(L_C^{\bot}\cap \mtx{Q}L_D^{\bot})}(\mtx{g})].
\end{align*}
The integrability of this expression in $\mtx{Q}$ follows, as above, from the fact that the projection map to $L_C+\mtx{Q}L_D$ is continuous for almost all $\mtx{Q}$ and $\vct{g}$. For generic $\mtx{Q}$, any $\vct{g}\in \R^d$ has a unique  decomposition $\vct{g}=\vct{g}_C+\vct{g}_D+\vct{g}^{\bot}$, with $\vct{g}_C\in L_C$, $\vct{g}_D\in \mtx{Q}L_D$, $\vct{g}^{\bot}\in (L_C+\mtx{Q}L_D)^{\bot}$. Note that $\vct{g}_C$ and $\vct{g}_D$ are \emph{not} orthogonal projections, and that the decomposition (even $\vct{g}_C$) depends on $\mtx{Q}$.

From the uniqueness of this decomposition we get the equivalence
\begin{equation*}
 \vct{g}\in C+\mtx{Q}D+(L_C^{\bot}\cap\mtx{Q}L_D^{\bot}) \Longleftrightarrow \vct{g}_C\in C \text{ and } \ \vct{g}_D\in \mtx{Q}D,
\end{equation*}
and therefore 
\begin{equation*}
 \Expect[v_{d-k}(C+\mtx{Q}D)] = \Expect_{\mtx{Q}}\Expect_{\mtx{g}\in \mN(\R^d)}[1_{C+\mtx{Q}D+(L_C^{\bot}\cap\mtx{Q}L_D^{\bot})}(\mtx{g})] 
 = \Expect_{\mtx{Q}}\Expect_{\mtx{g}\in \mN(\R^d)}[1_{C}(\vct{g}_C)1_{\mtx{Q}D}(\vct{g}_D)].
\end{equation*}
Now let $\mtx{Q}_0\in O(L_C)$ be fixed. 
By orthogonal invariance of the Haar measure and of the Gaussian distribution we can replace $\mtx{Q}$ with $\mtx{Q}_0\mtx{Q}$ and $\vct{g}$ with $\vct{g}':=\mtx{Q}_0\vct{g}$. We next determine the decomposition $\vct{g}'=\vct{g}'_C+\vct{g}'_D+\vct{g}'^{\bot}$ in $L_C+\mtx{Q}_0\mtx{Q}L_D+(L_C^{\bot}\cap \mtx{Q}_0\mtx{Q}L_D^{\bot})$. Note that under this substitution,
\begin{equation*}
 \vct{g}' = \mtx{Q}_0\vct{g} = \mtx{Q}_0\vct{g}_C+\mtx{Q}_0\vct{g}_D+\mtx{Q}_0\vct{g}^{\bot},
\end{equation*}
with $\mtx{Q}_0\vct{g}_C\in L_C$ (by the fact that $\mtx{Q}_0\in O(L_C)$), $\mtx{Q}_0\vct{g}_D\in \mtx{Q}_0\mtx{Q}L_D$ (by definition), and $\mtx{Q}_0\vct{g}^{\bot}\in \mtx{Q}_0(L_C^{\bot}\cap \mtx{Q}L_D^{\bot}) = (L_C^{\bot}\cap \mtx{Q}_0\mtx{Q}L_D^{\bot})$ (since $\mtx{Q}_0$ is the identity on $L_C^{\bot}$). By uniqueness of the decomposition,
\begin{equation*}
 \vct{g}'_C = \mtx{Q}_0\vct{g}_C, \ \vct{g}'_D = \mtx{Q}_0\vct{g}_D, \ \vct{g}'^{\bot} = \mtx{Q}_0\vct{g}^{\bot} = \vct{g}^{\bot}.
\end{equation*}
We therefore have
\begin{align*}
 \Expect_{\mtx{Q}}\Expect_{\mtx{g}\in \mN(\R^d)}[1_{C}(\vct{g}_C)1_{\mtx{Q}D}(\vct{g}_D)]&= \Expect_{\mtx{Q}_0\in O(L_C)}\Expect_{\mtx{Q}}\Expect_{\vct{g}\in \mN(\R^d)}[1_{C}(\mtx{Q}_0\vct{g}_C) 1_{\mtx{Q}_0\mtx{Q}D}(\mtx{Q}_0\vct{g}_D)]\\
 &=\Expect_{\mtx{Q}}\Expect_{\vct{g}\in \mN(\R^d)}[\Expect_{\mtx{Q}_0\in O(L_C)} [1_{C}(\mtx{Q}_0\vct{g}_C)] 1_{D}(\mtx{Q}^T\vct{g}_D)]\\
 &= v_j(C) \Expect_{\mtx{Q}}\Expect_{\vct{g}\in \mN(\R^d)}[1_{D}(\mtx{Q}^T\vct{g}_D)],
\end{align*}
where we used Fubini in the second and~\eqref{eq:Qgauss} in the last equality.
Note that $\mtx{Q}^T\vct{g}_D\in L_D$. Repeating the argument above by replacing $\vct{Q}$ with $\mtx{Q}\mtx{Q}_1^T$ for $\mtx{Q}_1\in O(L_D)$, we get
\begin{align*}
 \Expect_{\mtx{Q}}\Expect_{\vct{g}\in \mN(\R^d)}[1_{D}(\mtx{Q}^T\vct{g}_D)] &= \Expect_{\mtx{Q}_1\in O(L_D)}\Expect_{\mtx{Q}}\Expect_{\vct{g}\in \mN(\R^d)}[1_{D}(\mtx{Q}_1\mtx{Q}^T\vct{g}_D)]
 \\&=\Expect_{\mtx{Q}}\Expect_{\vct{g}\in \mN(\R^d)}\Expect_{\mtx{Q}_1\in O(L_D)}[1_{D}(\mtx{Q}_1\mtx{Q}^T\vct{g}_D)]=v_{\ell}(D),
\end{align*}
where again we used~\eqref{eq:Qgauss}. This finishes the proof.
\end{proof}
%

\begin{proof}[Proof of Theorem~\ref{thm:kinematic}]
We first note that it suffices to prove the first equality in~\eqref{eq:kinematic}, as we can deduce the second from the fact that the intrinsic volumes sum up to one,
\begin{align}\label{eq:joker}
   \Expect[v_0(C\cap \vct QD)] & = \Expect\Big[1-\sum_{k>0} v_k(C\cap \vct QD)\Big] = 1-\sum_{k>0} \Expect[v_k(C\cap \vct QD)]
\\ & = \Big(\sum_i v_i(C\times D)\Big)-\Big(\sum_{k>0} v_{k+d}(C\times D)\Big) = \sum_{j=0}^d v_j(C\times D) .
\nonumber
\end{align}
The equations in~\eqref{eq:crofton} follow directly from~\eqref{eq:kinematic} as a special case, since 
\begin{equation*}
 v_{k+d}(C\times L)=\sum_{i+j=k+d} v_i(C)v_j(L)=v_{k+m}(C)v_{d-m}(L)=v_{k+m}(C)
\end{equation*}
if $L$ is a linear subspace of dimension~$d-m$.


The genericity Lemma~\ref{lem:gen-inters} implies that the $k$-dimensional faces of $C\cap\vct QD$ are generically of the form $F\cap\vct QG$ with $(F,G)\in\F(C)\times\F(D)$ and $\dim F+\dim G=k+d$. If we have shown that for $F\in \F_j(C)$, $G\in \F_{\ell}(D)$, with $j+\ell>d$, one has
\begin{equation}\label{eq:pre-crofton}
  \Expect[v_{F\cap \vct QG}(C\cap \vct QD)\cdot 1_{\{F\pitchfork \vct QG\}}] = v_F(C)\ v_G(D) ,
\end{equation}
then the kinematic formula follows by noting that $v_F(C)v_G(D)=v_{F\times G}(C\times D)$ and
\begin{align*}
 \Expect[v_k(C\cap \vct QD)] &= \sum_{\substack{(F,G)\in \F(C)\times \F(D)\\\dim F+\dim G=k+d}}\Expect[v_{F\cap \vct QG}(C\cap \vct QD)\cdot 1_{\{F\pitchfork \vct QG\}}]
\\ & = \sum_{\substack{(F,G)\in \F(C)\times \F(D)\\\dim F+\dim G=k+d}}v_{F\times G}(C\times D)=v_{k+d}(C\times D).
\end{align*}
It remains to show~\eqref{eq:pre-crofton}.
%
%
By~\eqref{eq:intvolintersect} and Lemma~\ref{lem:gen-inters}, almost surely
  \[ v_{F\cap \vct QG}(C\cap \vct QD)\cdot 1_{\{F\pitchfork \vct QG\}} = v_k(F\cap \vct QG)\,v_{d-k}(N_FC + \vct QN_GD) . \]
The integrability of these terms has been shown in the proof of Lemma~\ref{lem:prekinematic}, which shows the integrability in~\eqref{eq:kinematic}.
In order to prove~\eqref{eq:pre-crofton} we proceed as in the proof of Lemma~\ref{lem:prekinematic}. 
Let $\vct Q_0\in O(L_F)$ be uniformly at random. Note that the normal cone $N_FC$ lies in the orthogonal complement of $L_F$, so that $\vct Q_0$ leaves the normal cone invariant.
Using the invariance of the Haar measure as in the proof of Lemma~\ref{lem:prekinematic},
\begin{align*}
  \Expect_{\vct Q}[ v_{F\cap \vct QG}(C\cap \vct QD) & \cdot 1_{\{F\pitchfork \vct QG\}}] = \Expect_{\vct Q}[v_k(F\cap \vct QG)\,v_{d-k}(N_FC + \vct QN_GD)]
\\ & = \Expect_{\vct Q_0}\Expect_{\vct Q}[v_k(F\cap \vct Q_0\vct QG)\,v_{d-k}(N_FC + \vct Q_0\vct QN_GD)]
\\ & \stackrel{(1)}{=} \Expect_{\vct Q_0}\Expect_{\vct Q}[v_k(\vct Q_0^T F\cap \vct QG)\,v_{d-k}(\vct Q_0^T N_FC + \vct QN_GD)]
\\ & = \Expect_{\vct Q}\big[\Expect_{\vct Q_0}[v_k(\vct Q_0^T F\cap \vct QG)] v_{d-k}(N_FC + \vct QN_GD) \big]
\\ & = \Expect_{\vct Q}\big[\Expect_{\vct Q_0}[v_k(\vct Q_0^T F\cap (\vct QG\cap L_F))] v_{d-k}(N_FC + \vct QN_GD) \big]
\\ & = \Expect_{\vct Q}\big[\Expect_{\vct Q_0}[v_k(F\cap \mtx{Q}_0(\vct QG\cap L_F))] v_{d-k}(N_FC + \vct QN_GD) \big]
\\ & \stackrel{(2)}{=} v_j(F)\,\Expect_{\vct Q}\big[ v_k(L_F\cap\vct QG) v_{d-k}(N_FC + \vct QN_GD)\big] ,
\end{align*}
where in (1) we used the orthogonal invariance of the intrinsic volumes and in (2) we applied Lemma~\ref{lem:prekinematic} to the inner expectation (note that the dimensions match).
Comparing the first line with the last line we see that the term $v_j(F)$ could be extracted by replacing $F$ with $L_F$. Repeating the same trick by replacing $\mtx{Q}$ with $\mtx{Q}\mtx{Q}_1$ for $\mtx{Q}_1\in O(L_D)$, we get
\begin{align*}
  \Expect_{\vct Q}[ v_{F\cap \vct QG}(C\cap \vct QD) \cdot 1_{\{F\pitchfork \vct QG\}}] &= v_j(F)v_\ell(G)\,\Expect_{\vct Q}\big[ v_k(L_F\cap\vct QL_G) v_{d-k}(N_FC + \vct QN_GD)\big]\\
  &=v_j(F)v_\ell(G)\,\Expect_{\vct Q}\big[v_{d-k}(N_FC + \vct QN_GD)\big]\\
  &= v_j(F)v_\ell(G)v_{d-j}(N_FC)v_{d-\ell}(N_GD) = v_F(C)v_G(D),
\end{align*}
where in the second equation we used that $v_k(L_F\cap \mtx{Q}L_G) = 1$, and the last equality follows from~\eqref{eq:pre-kinematic-dual}.
\end{proof}

\begin{remark}
 In the literature there are roughly two different strategies used to derive kinematic formulas:
\begin{enumerate}
  \item Use a characterisation theorem for the intrinsic volumes (or a suitable localisation thereof) that shows that certain types of functions in a cone must be linear combinations of the intrinsic volumes. 
  This approach is common in integral geometry~\cite{scwe:08,KR:97}, see~\cite{Gl,A:12} for the spherical/conic setting. 
  \item Assume that the boundary of the cone intersected with a sphere is a smooth hypersurface; then argue over the curvature of the intersection of the boundaries. For a general version of this approach, with references to related work, see~\cite{howa:93}.
\end{enumerate}
The second approach is usually also based on a double-counting argument that involves the co-area formula. Our proof can be interpreted as a piecewise-linear version of this approach.
\end{remark}

\section{The Klivans-Swartz relation for hyperplane arrangements}\label{sec:Kl-Sw}

While the most natural lattice structure associated to a polyhedral cone is arguably its face lattice, there is also the intersection lattice generated by the hyperplanes that are spanned by the facets of the cone (assuming that the cone has nonempty interior; otherwise one can argue within the linear span of the cone). In this section we present a deep and useful relation between this intersection lattice and the intrinsic volumes of the regions of the hyperplane arrangement, which is due to Klivans and Swartz~\cite{klivans2011projection}, and which we will generalize to also include the faces of the regions. We finish this section with some applications of this result.

Let $\A$ be a hyperplane arrangement in~$\R^d$. Recall from~\eqref{eq:def-R_k(A)} the notation $\mR_j(\A)$ and $r_j(\A)$ for the set of $j$-dimensional regions of the arrangement and for their cardinality, respectively. Also recall Zaslavsky's Theorem~\ref{thm:zas1}, which is the briefly stated identity $r_j(\A) = (-1)^j\,\chi_{\A,j}(-1)$, where $\chi_{\A,j}$ denotes the $j$th-level characteristic polynomial of the arrangement. Expressing this polynomial in the form
  \[ \chi_{\A,j}(t) = \sum_{k=0}^j a_{jk} t^k , \]
and using the identity $\sum_k v_k(C)=1$, we can rewrite Zaslavsky's result in the form
  \[ \sum_{k=0}^j \sum_{F\in \mR_j(\A)} v_k(F) = \sum_{k=0}^j (-1)^{j-k} a_{jk} . \]
Klivans and Swartz~\cite{klivans2011projection} have proved that in the case $j=d$ this equality of sums is in fact an equality of the summands. We will extend this and show that for all~$j$ the summands are equal. In particular, taking the sum of intrinsic volumes of all regions of a certain dimension~$j$ in a hyperplane arrangement yields a quantity that is solely expressible in the lattice structure of the hyperplane arrangement. So while the intrinsic volumes of a single region are certainly not necessarily invariant under any nonsingular linear transformations, the sum of intrinsic volumes over all regions of a fixed dimension is indeed invariant under any nonsingular linear transformations.

\begin{theorem}\label{thm:klivans}
Let $\A$ be a hyperplane arrangement in $\R^d$. Then for $0\leq j\leq d$,
\begin{equation*}
  \sum_{F\in \mR_j(\A)} P_F(t) = (-1)^j\chi_{\A,j}(-t) ,
\end{equation*}
where $P_F(t) = \sum_k v_k(F)t^k$. In terms of the intrinsic volumes, for $0\leq k\leq j$,
\begin{equation}\label{eq:KS-v_k(C)}
  \sum_{F\in \mR_j(\A)} v_k(F) = (-1)^{j-k} a_{jk},
\end{equation}
where $a_{jk}$ is the coefficient of $t^k$ in $\chi_{\A,j}(t)$.
\end{theorem}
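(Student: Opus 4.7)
The plan is to prove the theorem in two stages: first reduce the general-$j$ statement to the top-dimensional case $j=d$, and then handle $j=d$ by M\"obius inversion on the intersection lattice $\mL(\A)$. For the reduction, note that a $j$-dimensional face $G$ of a region of $\A$ with $\lin G = L \in \mL_j(\A)$ can be written as $G = F\cap L$ for some region $F \in \mR(\A)$, and the collection of all such $G$'s (for fixed $L$) coincides with the set of full-dimensional regions of the restricted arrangement $\A^L$. This yields the disjoint decomposition $\mR_j(\A) = \bigsqcup_{L \in \mL_j(\A)} \mR(\A^L)$, which combined with~\eqref{eq:expre-chi_(A,k)-chi_A} reduces the theorem to the top-dimensional assertion: for any arrangement $\mB \subseteq \R^d$,
\[
   \sum_{F \in \mR(\mB)} P_F(t) = (-1)^d \chi_{\mB}(-t) .
\]

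For this top-dimensional case, I would expand $P_F(t) = \sum_{G \in \F(F)} v_G(F)\, t^{\dim G}$ and regroup the double sum by $L := \lin G \in \mL(\mB)$. For fixed $G$ with $\lin G = L$, the regions $F \in \mR(\mB)$ containing $G$ as a face correspond bijectively to the regions of the \emph{localization} $\mB_L := \{H \in \mB : H \supseteq L\}$ viewed inside $L^\perp$; under this bijection $N_GF$ is the polar (in $L^\perp$) of the corresponding region $\tilde F$. Combined with $v_G(F) = v_{\dim L}(G)\cdot v_{d-\dim L}(N_GF)$ and the polarity rule $v_{d-\dim L}(\tilde F^\polar) = v_0(\tilde F)$, this yields
\[
   \sum_{F \in \mR(\mB) \,:\, G \in \F(F)} v_G(F) \;=\; v_{\dim L}(G) \cdot \nu(L) , \qquad \nu(L) := \sum_{\tilde F \in \mR(\mB_L)} v_0(\tilde F) .
\]
The $G$'s with $\lin G = L$ are exactly the regions of $\mB^L$ (tiling $L$), so $\sum_G v_{\dim L}(G) = 1$, and summing the previous display over such $G$ collapses the outer sum to
\[
   \sum_{F \in \mR(\mB)} P_F(t) \;=\; \sum_{L \in \mL(\mB)} \nu(L)\, t^{\dim L} .
\]

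The top-dimensional claim is therefore equivalent to $\nu(L) = (-1)^{d-\dim L}\mu(\R^d,L)$, which I would establish by M\"obius inversion. Setting $t=1$ in the display above gives the double-counting identity $r(\mB) = \sum_{L \in \mL(\mB)} \nu(L)$. Applying this identity to the sub-arrangement $\mB_L$ (viewed in $L^\perp$), together with the canonical lattice isomorphism $\mL(\mB_L) \cong \{L' \in \mL(\mB) : L' \supseteq L\}$ under which $\nu_{\mB_L}$ pulls back to $\nu_{\mB}$ on the upper interval, yields $r(\mB_L) = \sum_{L' \supseteq L}\nu(L')$. On the other hand, Zaslavsky's Theorem~\ref{thm:zas1} applied to $\mB_L$ gives $r(\mB_L) = \sum_{L' \supseteq L} (-1)^{d-\dim L'}\mu(\R^d,L')$. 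Since $\nu$ and $L \mapsto (-1)^{d-\dim L}\mu(\R^d,L)$ have identical down-sums over $\{L' \supseteq L\}$ for every $L$, uniqueness of M\"obius inversion forces them to coincide, completing the proof.

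The main obstacle I anticipate is carefully establishing the two bijective correspondences used in the expansion step (faces $G$ with $\lin G = L \leftrightarrow \mR(\mB^L)$; and regions of $\mB$ containing $G$ as a face $\leftrightarrow \mR(\mB_L)$ in $L^\perp$) together with the polar identification $N_GF = \tilde F^\polar$, as these are the technical heart of the argument---everything else then reduces to combinatorics on the intersection lattice.
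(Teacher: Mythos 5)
Your argument is correct, but it takes a genuinely different route from the paper. The paper proves~\eqref{eq:KS-v_k(C)} by induction on~$k$: the base case $k=0$ comes from the simplest Crofton relation~\eqref{eq:11} applied to a uniformly random hyperplane~$H$, the induction step uses the kinematic formula~\eqref{eq:crofton} with $m=1$ to trade $v_k$ of a region for $v_{k-1}$ of its generic hyperplane section, and Lemma~\ref{le:generic} (together with Zaslavsky's Theorem~\ref{thm:zas1}) is what converts the characteristic polynomial of the sliced arrangement $\A^H$ back into that of~$\A$. You instead avoid the kinematic formula entirely: after reducing to the top-dimensional case via $\mR_j(\A)=\bigsqcup_{L\in\mL_j(\A)}\mR(\A^L)$ and~\eqref{eq:expre-chi_(A,k)-chi_A}, you expand $P_F(t)=\sum_G v_G(F)t^{\dim G}$, regroup by the flat $L=\lin G$, use $v_G(F)=v_{\dim L}(G)\,v_{d-\dim L}(N_GF)$ together with the identification of the normal cones $N_GF$ with polars of the chambers of the localization $\mB_L$ (so that they tile $L^\perp$ in Gaussian measure), and then pin down the resulting flat-indexed coefficients $\nu(L)$ by applying Zaslavsky to every localization and invoking uniqueness of M\"obius inversion~\eqref{eq:moeb-inv}; note that in the reverse-inclusion order the sets $\{L'\supseteq L\}$ are exactly the lower intervals, so the inversion applies as you claim. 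This is essentially the original Klivans--Swartz style of argument (extended to all $j$ via restriction), and it buys a purely lattice-theoretic, section-free proof whose only analytic inputs are polarity and the product decomposition of $v_G(F)$; what the paper's route buys is that it bypasses the local-structure facts you rightly flag as the technical heart (faces with span $L$ are the chambers of $\A^L$, chambers adjacent to $G$ correspond to chambers of $\mB_L$, and $T_GF=F+\lin G$ so $N_GF=(T_GF)^\polar$ is the polar of the corresponding localized chamber inside $L^\perp$), replacing them with the already-established genericity Lemma~\ref{lem:gen-inters} and the kinematic machinery the article wants to showcase. Those local facts are standard and provable with the tools of Section~\ref{sec:duality} (e.g.\ via the sign-vector description of faces and Proposition~\ref{le:dualintersect}), so your plan goes through once you write them out carefully.
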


Note that in the special case $j=k$ we obtain $\sum_{F\in \mR_j(\A)} v_j(F) = \ell_j(\A)$, which is easily verified directly.
We derive a concise proof of Theorem~\ref{thm:klivans} by combining Zaslavsky's Theorem with the kinematic formula. A similar, though slightly different, proof strategy using the kinematic formula was recently employed in~\cite{KVZ:15} to derive Klivans and Swartz's result.

The cases $j=0,1$ will be shown directly; in the case $j\geq 2$ we prove~\eqref{eq:KS-v_k(C)} by induction on~$k$. This proof by induction naturally consists of two steps:
\begin{enumerate}
  \item For the case $k=0$ we need to show
  \begin{equation*}
    \sum_{F\in \mR_j(\A)} v_0(F) = (-1)^j a_{j0}.
  \end{equation*}
Let $H$ be a hyperplane in general position relative to~$\A$, that is, $H$ intersects all subspaces in $\mL(\A)$ transversely. In $H$ consider the restriction $\A^H=\{H'\cap H\mid H'\in \A\}$. The number of $(j-1)$-dimensional regions in $\A^H$ is given by the number of $j$-dimensional regions in~$\A$, which are hit by the hyperplane~$H$. With the simplest case of the Crofton formula~\eqref{eq:11}, we obtain for a uniformly random hyperplane $H$,
  \begin{equation*}
     \Expect\big[r_{j-1}(\A^H)\big] = \sum_{F\in \mR_j(\A)} \Prob\{F\cap H \neq \zerovct\} = \sum_{F\in \mR_j(\A)} (1-2v_0(F)) = r_j(\A) - 2\sum_{F\in \mR_j(\A)} v_0(F),
  \end{equation*}
  and therefore,
  \begin{equation}\label{eq:E[r(A^H)]=...}
   \sum_{F\in \mR_j(\A)} v_0(F)  = \frac{1}{2}\left(r_j(\A) - \Expect\big[r_{j-1}(\A^H)\big]\right) .
  \end{equation}
         We will see below that $r_{j-1}(\A^H)$ is almost surely constant (which eliminates the expectation on the left-hand side) and is in fact expressible in terms of $\chi_{\A,j}$. This will give the basis step in a proof by induction on~$k$ of~\eqref{eq:KS-v_k(C)}.
  \item For the induction step we use the kinematic formula~\eqref{eq:crofton} with $m=1$, that gives for a uniformly random hyperplane~$H$,
  \begin{align}\label{eq:red-kinform-KS-1}
     \sum_{F\in \mR_j(\A)} v_1(F) & = \sum_{F\in \mR_j(\A)} \big(\Expect[v_0(F\cap H)]-v_0(F)\big)
  \\\nonumber
     & = \Expect\Big[\sum_{F\in \mR_j(\A)} v_0(F\cap H)\Big] - \sum_{F\in \mR_j(\A)} v_0(F) ,
  \\\label{eq:red-kinform-KS-2}
  \sum_{F\in \mR_j(\A)} v_k(F) & = \sum_{F\in \mR_j(\A)} \Expect[v_{k-1}(F\cap H)] = \Expect\Big[\sum_{F\in \mR_j(\A)} v_{k-1}(F\cap H)\Big] , \quad \text{if } k\geq2 .
  \end{align}
        Notice that if the summation would be over the regions in $\A^H$, then we could (and in fact can if $k\geq2$) apply the induction hypothesis and express $\sum v_k(C\cap H)$ in terms of the characteristic polynomials of~$\A^H$, which, as we will see below, is constant for generic~$H$ and expressible in the characteristic polynomial of~$\A$. Since the summation is over the regions of~$\A$ we need to be a bit careful in the case $k=1$.
\end{enumerate}
To implement this idea we need to understand how the characteristic polynomial of a hyperplane arrangement changes when adding a hyperplane in general position.

\begin{lemma}\label{le:generic}
Let $\A$ be a hyperplane arrangement in~$\R^d$, and let $j\geq2$. If $H\subset\R^d$ is a linear hyperplane in general position relative to $\A$, then the $(j-1)$th-level characteristic polynomial of the reduced arrangement~$\A^H$ and the number of $(j-1)$-dimensional regions of $\A^H$ are given by
\begin{align*}
   \chi_{\A^H,j-1}(t) & = \chi_{\A,j}(0)+\frac{\chi_{\A,j}(t)-\chi_{\A,j}(0)}{t} , & r_{j-1}(\A^H) & = r_j(\A) - (-1)^j 2\chi_{\A,j}(0) .
\end{align*}
In terms of coefficients, if $\chi_{\A,j}(t)=\sum_k a_{jk}\, t^k$, then
\begin{align}\label{eq:chi_(A^L)(x)-gen}
  \chi_{\A^H,j-1}(t) & = a_{j0} + \sum_{k=1}^j a_{jk}\,t^{k-1},
  & r_{j-1}(\A^H) & = r_j(\A) - (-1)^j 2a_{j0} .
\end{align}
\end{lemma}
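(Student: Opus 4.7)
The plan is to reduce the identity to a ``top-level'' statement about a single arrangement restricted to a generic hyperplane, and then apply this to each restriction $\A^L$ for $L\in \mL_j(\A)$.

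\emph{Bijection and restriction identity.} For $j\ge 2$, I claim the map $L\mapsto L\cap H$ is a bijection $\mL_j(\A)\to \mL_{j-1}(\A^H)$. For injectivity, if $L_1\cap H = L_2\cap H=:N$ with $L_1\neq L_2$, then $L_1\cap L_2\in \mL(\A)$ has dimension at most $j-1$ (since $L_1\cap L_2\subsetneq L_1$) and at least $\dim N=j-1$, hence exactly $j-1\ge 1$; general position then gives $\dim((L_1\cap L_2)\cap H)=j-2$, contradicting $(L_1\cap L_2)\cap H = N$. For surjectivity, the smallest $M\in \mL(\A)$ containing a given $N\in \mL_{j-1}(\A^H)$ cannot have $\dim M=j-1$ (this would force $M=N\subseteq H$, violating general position), so $\dim M=j$ and a direct unpacking of the definitions gives $M\cap H=N$. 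Combining this bijection with~\eqref{eq:expre-chi_(A,k)-chi_A} applied to $\A^H$ yields
\[
  \chi_{\A^H, j-1}(t) = \sum_{L\in \mL_j(\A)} \chi_{(\A^H)^{L\cap H}}(t),
\]
and a direct check from the definitions shows $(\A^H)^{L\cap H}=(\A^L)^{L\cap H}$, with $L\cap H$ in general position within $L$ relative to $\A^L$ (every element of $\mL(\A^L)$ lies in $\mL(\A)$).

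\emph{Top-level sub-lemma.} The heart of the proof is to show that, for any arrangement $\B$ in an ambient space $V$ of dimension $n\ge 2$ and any generic hyperplane $K\subset V$,
\[
  \chi_{\B^K}(t) = \chi_\B(0) + \frac{\chi_\B(t)-\chi_\B(0)}{t}.
\]
I would prove the M\"obius identity
\[
  \mu_{\B^K}(K, N) = \sum_{M\in \mL(\B),\, M\cap K = N} \mu_\B(V, M)\qquad\text{for }N\in \mL(\B^K)
\]
by verifying that the right-hand side obeys the defining recursion of $\mu_{\B^K}(K,\cdot)$: summing over $N'\supseteq N$ in $\mL(\B^K)$ and interchanging the order gives $\sum_{M\in\mL(\B),\, M\supseteq N} \mu_\B(V, M) = \delta(V, M_0)$, where $M_0$ is the smallest element of $\mL(\B)$ containing $N$, and a short argument using general position shows that $M_0=V$ iff $N=K$. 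Plugging the identity back into $\chi_{\B^K}(t) = \sum_N \mu_{\B^K}(K,N)\,t^{\dim N}$, swapping the order of summation, and using $\dim(M\cap K)=\dim M-1$ for $\dim M\ge 1$ then yields the sub-lemma, with the constant term $\chi_\B(0)$ contributed by the case $M=\vct 0$ (when this lies in $\mL(\B)$).

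\emph{Assembly.} Applying the sub-lemma to $\B=\A^L$, $K=L\cap H$ and summing over $L\in \mL_j(\A)$ via~\eqref{eq:expre-chi_(A,k)-chi_A} immediately produces the first formula of the lemma, and reading off coefficients gives~\eqref{eq:chi_(A^L)(x)-gen}. Evaluating at $t=-1$ yields $\chi_{\A^H,j-1}(-1) = 2\chi_{\A,j}(0)-\chi_{\A,j}(-1)$, which combined with Zaslavsky's Theorem~\ref{thm:zas1} applied to both $\A$ and $\A^H$ rearranges to the region-count identity $r_{j-1}(\A^H)=r_j(\A)-(-1)^j 2\chi_{\A,j}(0)$. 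The main obstacle is the sub-lemma: the map $M\mapsto M\cap K$ is not injective on $\mL(\B)$ (all $1$-dimensional elements collapse to $\vct 0\in\mL(\B^K)$), so one cannot directly identify the two intersection lattices; the M\"obius-inversion trick sidesteps this by reducing the combinatorial bookkeeping to the single check $\delta(V, M_0)=\delta(K,N)$.
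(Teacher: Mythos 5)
Your proposal is correct, and it reaches the result by a somewhat different organization of the M\"obius-theoretic work than the paper. Both arguments start from the same observation --- for $j\geq 2$ the map $L\mapsto L\cap H$ is a bijection $\mL_j(\A)\to\mL_{j-1}(\A^H)$ --- and both finish with Zaslavsky's Theorem for the region count. The paper then works directly with the $j$th-level polynomial: it notes that the bijection is order-compatible on elements of dimension $\geq 2$, so the M\"obius values agree there ($\mu(\tilde L,L)=\bar\mu(\tilde L\cap H,L\cap H)$), which gives all nonconstant coefficients at once, and it handles the constant coefficient by a separate explicit expansion of the M\"obius recursion showing $\bar a_{j-1,0}=a_{j0}+a_{j1}$. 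You instead reduce via~\eqref{eq:expre-chi_(A,k)-chi_A} to a clean ``generic section'' sub-lemma for a single arrangement, $\chi_{\B^K}(t)=\chi_\B(0)+\bigl(\chi_\B(t)-\chi_\B(0)\bigr)/t$, proved by pushing the M\"obius function forward along $M\mapsto M\cap K$, i.e.\ $\mu_{\B^K}(K,N)=\sum_{M\cap K=N}\mu_\B(V,M)$, verified through the defining recursion and the observation that the smallest $M_0\in\mL(\B)$ containing $N$ equals $V$ exactly when $N=K$. What this buys is a uniform treatment of all coefficients --- the paper's constant-term bookkeeping $a_{j0}+a_{j1}$ falls out automatically from the fibers of the elements of dimension $\leq 1$ collapsing onto $\vct 0$ --- and a reusable, self-contained statement about generic restrictions; the cost is the extra layer of reduction through the restrictions $\A^L$ and the check $(\A^H)^{L\cap H}=(\A^L)^{L\cap H}$, which you rightly flag and which does hold. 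One small point to tighten: in your surjectivity argument you rule out $\dim M_0=j-1$ and then assert $\dim M_0=j$; you still need the upper bound $\dim M_0\leq j$, which follows by writing $N=\bigcap_{i\in I}(H_i\cap H)$ with $I\neq\emptyset$ and noting that $\bigcap_{i\in I}H_i\in\mL(\A)$ contains $N$ and has dimension $j$ by transversality (the case $N=H$, forcing $j=d$, being trivial). With that sentence added, the argument is complete.
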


\begin{proof}
Note first that if $\tilde L,L\in \mL(\A)$, with $\dim \tilde L,\dim L\geq 2$, then $\tilde L\supseteq L$ if and only if $\tilde L\cap H\supseteq L\cap H$.
 Indeed, if $\tilde L\cap H\supseteq L\cap H$, then
  \[ \dim(\tilde L\cap L)-1 = \dim(\tilde L\cap L\cap H) = \dim(L\cap H)=\dim L-1\geq 1 , \]
where we used the assumption that $H$ intersects all subspaces in $\mL(\A)$ transversely. Hence, $\dim L=\dim(\tilde L\cap L)$, and $\tilde L\supseteq L$. 
In other words, the map $L\mapsto L\cap H$ is a bijection between $\mL_j(\A)$ and $\mL_{j-1}(\A^H)$ for all $j\geq 2$ that is compatible with the partial orders on $\mL(\A)$ and $\mL(\A^H)$. Of course, all elements in $\mL_0(\A)\cup\mL_1(\A)$ are mapped to~$\vct0$.

Now, recall the form of the $j$th-level characteristic polynomial~\eqref{eq:def-chi_(A,k)(t)}
\begin{align*}
   \chi_{\A,j}(t) & = \sum_{k=0}^j a_{jk} t^k , & a_{jk} & = \sum_{\tilde L\in \mL_j(\A)}\,\sum_{L\in \mL_k(\A)}\mu(\tilde L,L) ,
\end{align*}
and also recall the recursive definition of the M\"obius function~\eqref{eq:def-moeb}, $\mu(\tilde L,L)=0$ if $\tilde L\not\supseteq L$, and
\begin{align*}
   \mu(L,L) & = 1 , & \mu(\tilde L,L) & = -\sum_{\tilde L\supseteq M\supset L} \mu(\tilde L,M) \quad \text{if } \tilde L\supset L .
\end{align*}
From the above observation about the sets $\mL_j(\A)$ and $\mL_{j-1}(\A^H)$ for $j\geq 2$ we obtain
  \[ \forall \tilde L,L\in\mL(\A),\, \dim\tilde L,\dim L\geq 2: \mu(\tilde L,L) = \bar\mu(\tilde L\cap H,L\cap H) , \]
where $\bar\mu$ shall denote the M\"obius function on $\mL(\A^H)$. This shows the claimed formula for the nonconstant coefficients of $\chi_{\A^H,j-1}$. We obtain the claim for the constant coefficient by noting that for $L\in\mL(\A)$, $\dim L\geq 2$, and $\bar L:=L\cap H$,
\begin{align*}
   \bar\mu(\bar L,\vct0) & = -\sum_{\bar L\supseteq \bar M\supset \vct0} \bar\mu(\bar L,\bar M) = -\sum_{\substack{L\supseteq M\\ \dim M\geq2}} \bar\mu(L\cap H,M\cap H) = -\sum_{\substack{L\supseteq M\\ \dim M\geq2}} \mu(L,M) ,
\end{align*}
so that the constant coefficient of $\chi_{\A^H,j-1}$ is given by
\begin{align*}
   \bar a_{j-1,0} & = -\sum_{L\in \mL_k(\A)}\,\sum_{\substack{L\supseteq M\\ \dim M\geq2}} \mu(L,M) .
\end{align*}
The constant and linear coefficients of $\chi_{\A,j}$ are given by
\begin{align*}
   a_{j0} & = \sum_{L\in \mL_j(\A)}\,\mu(L,\vct0) = -\sum_{L\in \mL_j(\A)}\,\sum_{\substack{L\supseteq M\\ \dim M\geq1}} \mu(L,M)
\\ & = -\sum_{L\in \mL_j(\A)}\,\sum_{\substack{L\supseteq M\\ \dim M\geq2}} \mu(L,M) - \sum_{L\in L_j(\A)}\,\sum_{M\in \mL_1(A), L\supseteq M} \mu(L,M),
\\ a_{j1} & = \sum_{L\in \mL_j(\A)}\,\sum_{M\in \mL_1(\A), L\supseteq M}\mu(L,M) ,
\end{align*}
which shows that indeed $\bar a_{j-1,0}=a_{j0}+a_{j1}$. As for the claimed formula for $r_{j-1}(\A^H)$ we use Zaslavsky's Theorem~\ref{thm:zas1} to obtain
\begin{align*}
   r_{j-1}(\A^H) & = (-1)^{j-1}\,\chi_{\A^H,j-1}(-1) = (-1)^{j-1}\,\big( 2\chi_{\A,j}(0)-\chi_{\A,j}(-1)\big) = r_j(\A) - (-1)^j 2\chi_{\A,j}(0) ,
\end{align*}
which finishes the proof.
\end{proof}

\begin{proof}[Proof of Theorem~\ref{thm:klivans}]
We first verify the cases $j=0,1$ directly. Recall from~\eqref{eq:chi_(A,0/1)(t)} that $\chi_{\A,0}(t) = \ell_0(\A)$ and $\chi_{\A,1}(t) = \ell_1(\A) (t-\ell_0(\A))$, where $\ell_j(\A)=|\mL_j(\A)|$. In a linear hyperplane arrangement we have at most one $0$-dimensional region, and $\mR_0(\A)=\mL_0(\A)$ (possibly both empty). Therefore,
\begin{align*}
   \sum_{F\in \mR_0(\A)} P_F(t) & = r_0(\A) = \ell_0(\A) = \chi_{\A,0}(-t) .
\end{align*}
As for the case $j=1$, note first that if $r_0(\A)=0$, then $\mR_1(\A)=\mL_1(\A)$ and the claim follows as in the case $j=0$. If on the other hand $r_0(\A)=1$, then every line $L\in \mL_1(\A)$ corresponds to two rays $F_+,F_-\in \mR_1(\A)$, that is, $r_1(\A)=2\ell_1(\A)$. Since $v_1(F_\pm)=v_0(F_\pm)=\frac{1}{2}$, and $\ell_0(\A)=1$, we obtain
\begin{align*}
   \sum_{F\in \mR_1(\A)} P_F(t) & = \frac{r_1(\A)}{2} (t+1) = \ell_1(\A) (t+\ell_0(\A)) = -\chi_{\A,1}(-t) .
\end{align*}

We now assume $j\geq 2$ and proceed by induction on $k$ starting with $k=0$. In~\eqref{eq:E[r(A^H)]=...} we have seen that
  \[ \sum_{F\in \mR_j(\A)} v_0(F)  = \frac{1}{2}\left(r_j(\A) - \Expect\big[r_{j-1}(\A^H)\big]\right) . \]
From Lemma~\ref{le:generic} we obtain that~$r_{j-1}(\A^H)$ is almost surely constant and given by $r_j(\A) - (-1)^j 2\chi_{\A,j}(0)$. Therefore,
\begin{align*}
   \sum_{F\in \mR_j(\A)} v_0(F) & = \tfrac{1}{2}\Big( r_j(\A)-\big(r_j(\A) - (-1)^j 2\chi_{\A,j}(0)\big)\Big) = (-1)^j \chi_{\A,j}(0) = (-1)^j a_{j0} .
\end{align*}
This settles the case $k=0$. For $k>0$ we need to distinguish between $k=1$ and $k\geq2$. From~\eqref{eq:red-kinform-KS-1}, we obtain, using the case $k=0$ and Lemma~\ref{le:generic},
\begin{align*}
   \sum_{F\in \mR_j(\A)} v_1(F) & = \Expect\Big[\sum_{F\in \mR_j(\A)} v_0(F\cap H)\Big]-\sum_{F\in \mR_j(\A)}v_0(F)
\\ & = \Expect\Big[\sum_{\bar F\in \mR_{j-1}(\A^H)} v_0(\bar F) + |\{F\in \mR_j(\A)\mid F\cap H=\vct0\}| \Big]-(-1)^j a_{j0}
\\ & = (-1)^{j-1}(a_{j0}+a_{j1}) + \sum_{F\in \mR_j(\A)} \underbrace{\Prob\{F\cap H=\vct0\}}_{=2v_0(F)} - (-1)^j a_{j0}
\\ & = (-1)^{j-1}(a_0+a_1) + 2 (-1)^j a_{j0} - (-1)^j a_{j0} = (-1)^{j-1} a_1 .
\end{align*}
This settles the case $k=1$. Finally, in the case $k\geq2$ we argue similarly, using that $v_i(\vct0)=0$ if $i>0$,
\begin{align*}
   \sum_{F\in \mR_j(\A)} v_k(C) & \stackrel{\eqref{eq:red-kinform-KS-2}}{=} \Expect\Big[\sum_{F\in \mR_j(\A)} v_{k-1}(F\cap H)\Big] = \Expect\Big[\sum_{\bar F\in \mR_{j-1}(\A^H)} v_{k-1}(\bar F)\Big]
\\ & = (-1)^{j-1-(k-1)} \bar a_{jk} = (-1)^{j-k} a_{jk} .
\end{align*}
\end{proof}

\begin{remark}
 It was pointed out to us by Rolf Schneider that for $k>0$, $j>0$ and a subspace $L$ of dimension $\dim L=d-m$, in general position relative to $\A$, one can (as we did in the case $k=0$) use the identity
\begin{equation*}
  r_{j-m}(\A^L) = \Expect[ r_{j-m}(\A^L) ] = \Expect\big[\sum_{F\in \mR_j(\A)} 1\{L\cap F\neq \zerovct\} \big] = \sum_{F\in \mR_j(\A)}\Prob\{ L\cap F\neq \zerovct\}
\end{equation*}
to express the sum of the Grassmann angles in terms of the number of regions of the reduced arrangement.
One can then derive the expression (for example, by applying Lemma~\ref{le:generic} iteratively),
 \begin{equation*}
  r_{j-m}(\A^{L}) = (-1)^{j-m}\left( a_{j0}+\cdots +a_{jm} + \sum_{k=m+1}^j (-1)^{k-m}a_{jk}\right)
 \end{equation*}
 to express the number of regions of the reduced arrangement in terms of the characteristic polynomial of $\A$.
 Via the Crofton formulas~\ref{cor:Croft}, we can use this to recover the expressions for the intrinsic volumes. 
\end{remark}

\subsection{Applications}

In this section we compute some examples and present some applications of Theorem~\ref{thm:klivans}.

\subsubsection{Product arrangements}

Let $\A,\B$ be two hyperplane arrangements in $\R^d$ and $\R^e$, respectively. The product arrangement in $\R^{d+e}$ is defined as
  \[ \A\times\B = \{H\times\R^e\mid H\in\A\}\cup\{\R^d\times H\mid H\in\B\} . \]
The characteristic polynomial is multiplicative, $\chi_{\A\times \B}(t)=\chi_\A(t)\chi_\B(t)$, and so is the bivariate polynomial~\eqref{eq:biv-char}, $X_{\A\times\B}(s,t) = X_\A(s,t)X_\B(s,t)$. This can either be shown directly~\cite[Ch.~2]{OT:92}, or deduced from Theorem~\ref{thm:klivans}, as the intrinsic volumes polynomial satisfies $P_{C\times D}(t)=P_C(t) P_D(t)$.

\subsubsection{Generic arrangements}\label{sec:gen-arr}

A hyperplane arrangement $\A$ is said to be in general position if the corresponding normal vectors are linearly independent.\footnote{We only discuss linear hyperplane arrangements; for generic affine hyperplane arrangements see for example~\cite{ardila2014algebraic}.} Combinatorial properties of such arrangements have been studied by Cover and Efron~\cite{cover1967geometrical}, who generalize results of Schl\"afli~\cite{schlafli} and Wendel~\cite{wend:62} to get expressions for, among other things, the average number of $j$-dimensional faces of a region in the arrangement. We set out to compute the characteristic polynomial of an arrangement of hyperplanes in general position, and in the process recover the formulas of Cover and Efron and a formula of Hug and Schneider~\cite{hug2015random} for the expected intrinsic volumes of the regions.

\begin{lemma}\label{lem:1}
Let $\A=\{H_1,\ldots,H_n\}$ be a generic hyperplane arrangement in~$\R^d$ with $n\geq d$. Then for $0<j\leq d$,
\begin{equation}\label{eq:chi_(A,j)-gen}
  (-1)^j \chi_{\A,j}(-t) = \binom{n}{d-j} \bigg( \binom{n-d+j-1}{j-1} + \sum_{k=1}^j \binom{n-d+j}{j-k} t^k\bigg) .
\end{equation}
\end{lemma}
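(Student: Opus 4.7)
The plan is to use the formula $\chi_{\A,j}(t) = \sum_{L\in\mL_j(\A)} \chi_{\A^L}(t)$ from~\eqref{eq:expre-chi_(A,k)-chi_A} to reduce the computation of the $j$th-level characteristic polynomial of $\A$ to the computation of the (top-level) characteristic polynomial of a generic arrangement in $\R^j$. The key observation is that for a generic arrangement in $\R^d$, the intersection of any $d-j$ of the $n$ hyperplanes is a distinct $j$-dimensional subspace, so $|\mL_j(\A)|=\binom{n}{d-j}$; moreover, for any $L\in\mL_j(\A)$, the restriction $\A^L$ consists of the remaining $n-(d-j) = n-d+j$ hyperplanes intersected with $L\cong\R^j$, and is again generic. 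By symmetry all these restrictions have the same characteristic polynomial, hence $\chi_{\A,j}(t) = \binom{n}{d-j}\,\chi_{\A^L}(t)$.

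Next I would compute $\chi_\B(t)$ for a generic arrangement $\B$ of $m:=n-d+j$ hyperplanes in $\R^j$ (note $m\geq j$ since $n\geq d$). For $0\leq k\leq j-1$, the intersection of any $k$ chosen hyperplanes is a distinct subspace of dimension $j-k$, and the interval $[\R^j,L]$ in $\mL(\B)$ (under reverse inclusion) is isomorphic to the Boolean lattice $B_k$, so $\mu(\R^j,L)=(-1)^k$. The only remaining element is $\{\vct0\}$, the intersection of every subset of hyperplanes of size $\geq j$. Its M\"obius value follows from the defining recursion~\eqref{eq:def-moeb} together with the standard identity $\sum_{k=0}^{j-1}\binom{m}{k}(-1)^k=(-1)^{j-1}\binom{m-1}{j-1}$, yielding $\mu(\R^j,\{\vct0\})=(-1)^j\binom{m-1}{j-1}$. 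Assembling these,
\begin{equation*}
   \chi_\B(t) = (-1)^j\binom{m-1}{j-1} + \sum_{k=0}^{j-1}\binom{m}{k}(-1)^k t^{j-k}.
\end{equation*}

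Finally, I would substitute $t\mapsto -t$, multiply by $(-1)^j$, and re-index the sum by $\ell=j-k$ to get
\begin{equation*}
   (-1)^j\chi_\B(-t) = \binom{m-1}{j-1} + \sum_{\ell=1}^j \binom{m}{j-\ell}\,t^\ell,
\end{equation*}
(the signs $(-1)^j(-1)^k(-1)^{j-k}=1$ and $(-1)^{j+1}(-1)^{j-1}=1$ align nicely). Replacing $m$ with $n-d+j$ and multiplying by $\binom{n}{d-j}$ produces exactly~\eqref{eq:chi_(A,j)-gen}.

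The only nontrivial step is the M\"obius value at $\{\vct0\}$, and that reduces to a one-line binomial identity; the rest is bookkeeping. So no serious obstacle is anticipated—the lemma is essentially a direct combinatorial calculation once the reduction to a generic arrangement in $\R^j$ has been made.
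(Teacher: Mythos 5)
Your argument is correct, and its first half coincides with the paper's proof: both use the reduction~\eqref{eq:expre-chi_(A,k)-chi_A} together with the facts that $|\mL_j(\A)|=\binom{n}{d-j}$ and that each restriction $\A^L$ is again a generic arrangement of $n-d+j$ hyperplanes in $L$, so everything comes down to the top-level characteristic polynomial of a generic arrangement. Where you genuinely differ is in how that top-level polynomial is obtained. The paper invokes Whitney's theorem, $\chi_{\A}(t)=\sum_{\B\subseteq\A}(-1)^{|\B|}t^{d-\rho(\B)}$, using genericity only through $\rho(\B)=\min\{|\B|,d\}$; you instead compute the M\"obius function of the intersection lattice directly, observing that the positive-dimensional flats are in bijection with the $k$-subsets of hyperplanes for $k\leq j-1$, that the corresponding lower intervals are Boolean (so $\mu=(-1)^k$ there), and extracting the value at $\vct 0$ from the defining recursion~\eqref{eq:def-moeb}. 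Your route is more self-contained relative to the paper's poset preliminaries (no appeal to Whitney's theorem), at the price of having to verify the lattice structure — that distinct $k$-subsets give distinct flats and that every hyperplane containing such a flat belongs to its defining subset — which is exactly where genericity enters; both proofs then rest on equivalent alternating binomial identities, yours being $\sum_{k=0}^{j-1}(-1)^k\binom{m}{k}=(-1)^{j-1}\binom{m-1}{j-1}$ and the paper's being $\sum_{k=d}^{n}(-1)^k\binom{n}{k}=(-1)^d\binom{n-1}{d-1}$. One caveat you share with the paper, and which is therefore not a gap in your proposal: for $j=1$ and $n>d$ the restricted family $\{H\cap L\}$ collapses to the single hyperplane $\vct 0$ of the line $L$, so ``generic with $n-d+j$ hyperplanes'' is not literally accurate there; both computations nevertheless return $\chi_{\A^L}(t)=t-1$, so the formula~\eqref{eq:chi_(A,j)-gen} is unaffected.
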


\begin{proof}
Assume first that $j=d$. The proof in this case relies on Whitney's theorem~\cite[Prop.~3.11.3]{EC}
\begin{equation*}
  \chi_{\A}(t) = \sum_{\B\subseteq \A} (-1)^{|\B|} t^{d-\rho(\B)},
\end{equation*}
where $\rho$ denotes the rank of the arrangement $\B$. We can subdivide the sum into two parts:
\begin{equation*}
 \sum_{|\B|<d} (-1)^{|\B|} t^{d-\rho(\B)} + \sum_{|\B|\geq d} (-1)^{|\B|} t^{d-\rho(\B)}.
\end{equation*}
Since $\A$ is in general position, $\rho(\B)=|\B|$ if $|\B|\leq d$, and $\rho(\B)=d$ if $|\B|\geq d$. Collecting terms with equal rank, we obtain
\begin{equation*}
 \chi_{\A}(t) = \sum_{k=0}^{d-1} \binom{n}{k}(-1)^k t^{d-k} + \sum_{k=d}^n \binom{n}{k} (-1)^k .
\end{equation*}
An easy induction proof shows that $\sum_{k=d}^n \binom{n}{k} (-1)^k=\binom{n-1}{d-1}(-1)^d$, which settles the case $j=d$.

For the case $0<j<d$ note that if $L\in\mL_j(\A)$, then $L$ is the intersection of $d-j$ uniquely determined hyperplanes, and the restriction $\A^L$ is a generic hyperplane arrangement in~$L$ consisting of~$n-d+j$ hyperplanes. Furthermore, there are exactly $\binom{n}{d-j}$ linear subspaces of dimension~$j$ in $\mL(\A)$. Therefore, using the characterisation~\eqref{eq:expre-chi_(A,k)-chi_A} of the $j$th-level characteristic polynomial, we obtain
\begin{align*}
  (-1)^j \chi_{\A,j}(-t) & = \sum_{L\in \mL_j(\A)}(-1)^j\chi_{\A^L}(-t) = \binom{n}{d-j} \bigg( \binom{n-d+j-1}{j-1} + \sum_{k=1}^j \binom{n-d+j}{j-k} t^k\bigg) ,
\end{align*}
where the second equality follows from the case $j=d$.
\end{proof}

From Zaslavsky's Theorem~\ref{thm:zas1} we obtain from~\eqref{eq:chi_(A,j)-gen} the number of $j$-dimensional regions in a generic hyperplane arrangement, $r_j(\A)$, by setting $t=1$. Using the simplification
\begin{equation*}
 \binom{n-d+j-1}{j-1} + \sum_{k=1}^j \binom{n-d+j}{j-k} = 2 \ \sum_{k=1}^{j} \binom{n-d+j-1}{j-k}
\end{equation*}
we recognize the right-hand side as Schl\"afli's formula~\cite[(1.1)]{cover1967geometrical} for the number of regions of a generic arrangement of $n-d+j$ hyperplanes in $j$-dimensional space. The resulting formula for $r_j(\mathcal{A})$ allows us to recover the formula of Cover and Efron~\cite[Theorem 1]{cover1967geometrical} for the sum of the $f_j(C)$ over all regions.

If one takes one of these $j$-dimensional regions uniformly at random, then one also recovers the expression for the average number of $j$-dimensional faces from~\cite[Theorem 3']{cover1967geometrical}. Moreover,
then~\eqref{eq:chi_(A,j)-gen} and Theorem~\ref{thm:klivans} together yield a closed formula for the expected intrinsic volumes of the regions. In particular, the $d$-dimensional regions have expected intrinsic volumes of
\begin{align*}
  \Expect_{C\in \mR_d(\A)}[v_0(C)] & = \frac{1}{r_d(\A)}\binom{n-1}{d-1} , & \Expect_{C\in \mR_d(\A)}[v_k(C)] & = \frac{1}{r_d(\A)}\binom{n}{d-k} , \quad \text{if } k>0 .
\end{align*}
This is~\cite[Theorem 4.1]{hug2015random}.

\subsubsection{Braid and Coxeter arrangements}

Finally, we compute the $j$th-level characteristic polynomial for the three families of arrangements
\begin{align*}
   \A_A & := \big\{ \{\vct x\in\R^d\mid x_i=x_j\} \mid 1\leq i< j\leq d \big\} ,
\\ \A_{BC} & := \big\{ \{\vct x\in\R^d\mid x_i=\pm x_j\} \mid 1\leq i< j\leq d \big\}\cup \big\{ \{\vct x\in\R^d\mid x_i = 0\} \mid 1\leq i\leq d \big\} ,
\\ \A_D & := \big\{ \{\vct x\in\R^d\mid x_i=\pm x_j\} \mid 1\leq i< j\leq d \big\} .
\end{align*}
These arrangements are particularly nice to work with as the $d$-dimensional regions are all isometric; these chambers are indeed given by
\begin{align*}
   \A_A & :\; \{\vct x\in\R^d\mid x_{\pi(1)}\leq \dots\leq x_{\pi(d)}\} , & & \pi\in S_d ,
\\ \A_{BC} & :\; \{\vct x\in\R^d\mid 0\leq s_1 x_{\pi(1)}\leq \dots\leq s_d x_{\pi(d)}\} , & & s_1,\ldots,s_d\in \{\pm 1\}, \pi\in S_d ,
\\ \A_D & :\; \{\vct x\in\R^d\mid -s_1 x_{\pi(1)}\leq s_1 x_{\pi(1)}\leq \dots\leq s_d x_{\pi(d)}\} , & & s_1,\ldots,s_d\in \{\pm 1\}, \pi\in S_d .
\end{align*}
The characteristic polynomials of these arrangements are well known, see for example~\cite[Sec.~6.4]{ardila2014algebraic},
\begin{align}\label{eq:chi_A,A-BC-D}
   \chi_{\A_A}(t) & = \prod_{i=0}^{d-1} (t-i) , & \chi_{\A_{BC}}(t) & = \prod_{i=0}^{d-1} (t-2i-1) ,
\\\nonumber
   \chi_{\A_D}(t) & = (t-d+1)\prod_{i=0}^{d-2} (t-2i-1) = \chi_{\A_{BC}}(t) + d\prod_{i=0}^{d-2} (t-2i-1) .\hspace{-10cm}
\end{align}
The bivariate polynomial $X_{\A_A}(s,t)$ (along with affine generalizations) has been computed in~\cite[Thm.~8.3.1]{A:thesis}. We derive this again, along with polynomials for the other two arrangements, from the known characteristic polynomials.

\begin{lemma}
The $j$th-level characteristic polynomials for the above defined hyperplane arrangements are given by
\begin{align*}
   \chi_{\A_A,j}(t) & = \sttwo{d}{j} \prod_{i=0}^{j-1} (t-i) , & \chi_{\A_{BC},j}(t) & = \sttwo{d+1}{j+1} \prod_{i=0}^{j-1} (t-2i-1) ,
\\ \chi_{\A_D,j}(t) & = \chi_{\A_{BC},j}(t) + j\sttwo{d}{j} \prod_{i=0}^{j-2} (t-2i-1) , \hspace{-10cm}
\end{align*}
where $\sttwo{d}{j}$ denote the Stirling numbers of the second kind.
\end{lemma}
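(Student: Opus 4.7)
The unifying strategy is to apply~\eqref{eq:expre-chi_(A,k)-chi_A}, which expresses $\chi_{\A,j}(t) = \sum_{L\in\mL_j(\A)} \chi_{\A^L}(t)$, together with the closed forms~\eqref{eq:chi_A,A-BC-D} for the full characteristic polynomials. This reduces the proof to (i) enumerating the $j$-dimensional flats of each arrangement and (ii) verifying that the restricted arrangement at each such flat is again a Coxeter arrangement of the same family but in rank~$j$. For $\A_A$ this is classical: a flat $L$ of $\A_A$ is determined by the equivalence relation ``$i\sim i'$ iff $x_i=x_{i'}$ on $L$'', its dimension equals the number of equivalence classes, hence $|\mL_j(\A_A)|=\sttwo{d}{j}$. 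Choosing one representative per block identifies $L\cong\R^j$ and exhibits $\A_A^L$ as the braid arrangement on the representatives, whose characteristic polynomial is $\prod_{i=0}^{j-1}(t-i)$ by~\eqref{eq:chi_A,A-BC-D}. Summing gives the first identity.

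For $\A_{BC}$, I plan to model the $j$-dimensional flats by partitions of the augmented set $\{0,1,\ldots,d\}$ into $j+1$ nonempty blocks: the block containing $0$ plays the role of the ``zero block'' $Z\cup\{0\}$ collecting the coordinates that $L$ forces to~$0$, while the remaining $j$ blocks encode the equalities $x_i=\pm x_{i'}$, with the sign patterns inside each block considered modulo a global sign flip. Once this bijection is set up carefully so that the sign-flip equivalences are properly absorbed, the count of flats becomes $\sttwo{d+1}{j+1}$; moreover at each such flat the restriction $\A_{BC}^L$ is isomorphic to $\A_{BC}$ on $\R^j$, because the zero block contributes the coordinate-hyperplane part of the rank-$j$ BC structure while the other blocks contribute the $x_i=\pm x_{i'}$ part. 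Substituting $\chi_{\A_{BC}^L}(t)=\prod_{i=0}^{j-1}(t-2i-1)$ from~\eqref{eq:chi_A,A-BC-D} into~\eqref{eq:expre-chi_(A,k)-chi_A} then yields the claim.

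For $\A_D$, the cleanest route is to combine the $\A_{BC}$ formula already established with the splitting $\chi_{\A_D}(t)=\chi_{\A_{BC}}(t)+d\prod_{i=0}^{d-2}(t-2i-1)$ coming from~\eqref{eq:chi_A,A-BC-D}. Applying~\eqref{eq:expre-chi_(A,k)-chi_A} directly to $\A_D$, one sees that the flats of $\A_D$ coincide with those of $\A_{BC}$ except in the absence of the coordinate hyperplanes: the flats that, for $\A_{BC}$, would be obtained by enforcing a single coordinate $x_i=0$ through a coordinate hyperplane are now only reached through intersections of pairs $x_i=\pm x_{i'}$, and this rearrangement produces an additional summand of the form $j\sttwo{d}{j}\prod_{i=0}^{j-2}(t-2i-1)$, mirroring the shape of the correction in the full characteristic polynomial. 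The main obstacle throughout is the BC step: one must verify in detail that sign-flip equivalences are absorbed in exactly the right way to produce the Stirling-number count $\sttwo{d+1}{j+1}$, and that the restriction at a flat with a nonempty zero block genuinely reproduces a full $\A_{BC}$ rather than a mixed type-$D$ arrangement on $L$.
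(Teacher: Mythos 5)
Your overall strategy---feeding the known full characteristic polynomials \eqref{eq:chi_A,A-BC-D} into the restriction formula \eqref{eq:expre-chi_(A,k)-chi_A}, counting the $j$-flats, and identifying each restricted arrangement as a lower-rank member of the same family---is the paper's strategy, and your treatment of $\A_A$ coincides with the paper's. The genuine gap is precisely the point you flag as the ``main obstacle'' in the $BC$ case: the sign patterns are \emph{not} absorbed. A $j$-flat of $\A_{BC}$ is determined by a zero set, an unsigned partition of its complement into $j$ blocks, \emph{and} a sign class on each block (signs modulo a global flip per block), and distinct sign classes give distinct flats: already in $\R^2$ the flats $\{x_1=x_2\}$ and $\{x_1=-x_2\}$ are different, so $\A_{BC}$ has four $1$-dimensional flats while $\sttwo{3}{2}=3$. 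In general the number of $j$-flats is $\sum_{i\geq j}\binom{d}{i}2^{i-j}\sttwo{i}{j}$, which is strictly larger than $\sttwo{d+1}{j+1}$ for $0<j<d$; and since the leading coefficient of $\chi_{\A,j}$ equals $\ell_j(\A)$, no careful bookkeeping can make your proposed bijection with partitions of $\{0,1,\ldots,d\}$ into $j+1$ blocks work---the step ``once this bijection is set up carefully'' cannot be completed. (You can cross-check the discrepancy against Theorem~\ref{thm:klivans}: for $d=2$, $j=1$ the eight rays of the arrangement give $\sum_F P_F(t)=4(1+t)$, matching leading coefficient $4$, not $3$.) The paper's own proof in fact passes over the same issue, describing flats of $\A_{BC}$ only through equalities $x_a=x_b$ without the $\pm$ signs; so your instinct about where the difficulty sits is exactly right, but the resolution is a corrected (Dowling-type) count of flats, not an absorption of signs, and the identification of each restriction with a full lower-rank $\A_{BC}$ does remain valid.

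Separately, your $\A_D$ argument is only a heuristic: asserting that removing the coordinate hyperplanes ``produces an additional summand of the form $j\sttwo{d}{j}\prod_{i=0}^{j-2}(t-2i-1)$, mirroring the shape of the correction'' in $\chi_{\A_D}$ is not a derivation. The paper instead classifies the $j$-flats of $\A_D$ into those imposing no zero coordinates (whose restriction is of type $D$) and those with a nonempty zero block (whose restriction is of type $BC$), counts the two types separately, and adds the two contributions of \eqref{eq:expre-chi_(A,k)-chi_A}. To complete the $D$ case you would need an argument of that kind, and it inherits the same sign-counting problem as the $BC$ case.
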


\begin{proof}
We first discuss the case $\A=\A_A$. From the formula for the chambers of $\A$ it is seen that an element in $\mL(\A)$ is of the form
  \[ L=\{\vct x\in\R^d\mid x_{\pi(k_1)}=\dots=x_{\pi(\ell_1)} ,\; x_{\pi(k_2)}=\dots=x_{\pi(\ell_2)} ,\; \dots \} , \]
where $k_1\leq \ell_1 < k_2\leq\ell_2 < \dots$. More precisely, for $L\in\mL_j(\A)$ there exists a unique partition $I_1,\ldots,I_j$, each nonempty, of $\{1,\ldots,d\}$ such that $L=\{\vct x\in\R^d\mid \forall i=1,\ldots,j, \forall a,b\in I_i, x_a=x_b\}$. The corresponding reduction $\A^L$ is easily seen to be a nonsingular linear transformation of the $j$-dimensional braid arrangement, so that $\chi_{\A^L}(t) = \prod_{i=0}^{j-1} (t-i)$. Since the number of partitions of $\{1,\ldots,d\}$ into $j$ nonempty sets is given by $\sttwo{d}{j}$, cf.~\cite{EC}, and by the characterisation~\eqref{eq:expre-chi_(A,k)-chi_A} of $\chi_{\A,j}(t)$, we obtain the claim in the case $\A=\A_A$.

In the case $\A=\A_{BC}$ we can argue similarly, but we need to keep in mind the extra role of the origin. For every element $L\in\mL(\A)$ there exists a subset $I$ of $\{1,\ldots,d\}$ of cardinality $|I|\geq j$, and a partition $I_1,\ldots,I_j$ of $I$ such that $L=\{\vct x\in\R^d\mid \forall a\not\in I, x_a=0 \text{ and } \forall i=1,\ldots,j, \forall a,b\in I_i, x_a=x_b\}$. The same argument as in the case $\A=\A_A$, along with the identity $\sum_{i=j}^d \binom{d}{i}\sttwo{i}{j} = \sttwo{d+1}{j+1}$, then settles the case $\A=\A_{BC}$.

In the case $\A=\A_D$ we have two types of linear subspaces:
\begin{align*}
   L_1 & =\{\vct x\in\R^d\mid x_{\pi(k_1)}=\dots=x_{\pi(\ell_1)} ,\; x_{\pi(k_2)}=\dots=x_{\pi(\ell_2)} ,\; \dots \} ,
\\ L_2 & =\{\vct x\in\R^d\mid 0=x_{\pi(k_1)}=\dots=x_{\pi(\ell_1)} ,\; x_{\pi(k_2)}=\dots=x_{\pi(\ell_2)} ,\; \dots \} .
\end{align*}
For the first type of linear subspace we obtain a reduction $\A^{L_1}$ that is isomorphic to the arrangement $\A_D$, while for the second type we obtain a reduction $\A^{L_2}$ that is isomorphic to the arrangement $\A_{BC}$ (each, of course, of the corresponding dimension). The number of subspaces of type $L_1$ is given by $\sttwo{d}{j}$ (as in the case $\A=\A_A$), while the number of subspaces of type $L_2$ is given by $\sttwo{d+1}{j+1}-\sttwo{d}{j}$ (as in the case $\A=\A_{BC}$, but noting that $|I|=d$ does not give a $BC$-type reduction). The same argument as before now yields the formula
\begin{align*}
   \chi_{\A_D,j}(t) & = \sttwo{d}{j} (t-j+1)\prod_{i=0}^{j-2} (t-2i-1) + \bigg( \sttwo{d+1}{j+1} - \sttwo{d}{j}\bigg) \prod_{i=0}^{j-1} (t-2i-1)
\\ & = \sttwo{d+1}{j+1} \prod_{i=0}^{j-1} (t-2i-1) + j\sttwo{d}{j} \prod_{i=0}^{j-1} (t-2i-1) ,
\end{align*}
which settles the case $\A=\A_D$.
\end{proof}

As before in the case of generic hyperplanes in Section~\ref{sec:gen-arr}, we finish by considering resulting formulas for uniformly random $j$-dimensional regions of the arrangement. We restrict to the arrangements $\A_A$ and $\A_{BC}$, and we restrict the formulas to the statistical dimensions. These statistical dimensions are particularly interesting for applications as seen in~\cite{edge}, where only the $d$-dimensional regions were considered. (Here, of course, the expectation vanishes since all $d$-chambers of these arrangements are isometric; for the lower-dimensional regions this is no longer true.)

Recall that the statistical dimension is given by $\sdim(C)=v_C'(1)$. Using again $r_j(\A)=(-1)^j\chi_{\A,j}(-1)$, we obtain
\begin{align*}
   \frac{1}{r_j(\A)}\sum_{F\in\mR_j(\A)} \sdim(F) & = \frac{1}{(-1)^j\chi_{\A,j}(-1)}\sum_{F\in\mR_j(\A)} v_F'(1) = -\frac{\chi_{\A,j}'(-1)}{\chi_{\A,j}(-1)} .
\end{align*}
We thus obtain:
\begin{align*}
   \chi_{\A_A,j}'(t) & = \chi_{\A_A,j}(t)\sum_{i=0}^{j-1} \frac{1}{t-i} , & \chi_{\A_{BC},j}'(t) & = \chi_{\A_{BC},j}(t)\sum_{i=0}^{j-1} \frac{1}{t-2i-1} ,
\\ -\frac{\chi_{\A_A,j}'(-1)}{\chi_{\A_A,j}(-1)} & = \sum_{i=0}^{j-1} \frac{1}{1+i} = H_j , & -\frac{\chi_{\A_{BC},j}'(-1)}{\chi_{\A_{BC},j}(-1)} & = \sum_{i=0}^{j-1} \frac{1}{1+2i+1} = \tfrac{1}{2}H_j ,
\end{align*}
where $H_j$ denotes the $j$th harmonic number.
We have thus derived the following application.

\begin{proposition}
Let $F_A\in\mR_j(\A_A)$ and $F_{BC}\in\mR_j(\A_{BC})$ be chosen uniformly at random among all elements in $\mR_j(\A_A)$ and $\mR_j(\A_{BC})$, respectively. Then their expected statistical dimensions are given by
\begin{align*}
  \Expect[\sdim(F_A)] & = H_j , & \Expect[\sdim(F_{BC})] & = \tfrac{1}{2}H_j .
\end{align*}
\end{proposition}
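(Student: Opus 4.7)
The plan is to treat this as an immediate corollary of the Klivans--Swartz relation (Theorem~\ref{thm:klivans}) combined with the factored expressions for $\chi_{\A_A,j}(t)$ and $\chi_{\A_{BC},j}(t)$ provided by the preceding lemma. The entire argument is essentially the displayed computation just above the proposition statement; I would simply repackage it carefully so that the reader sees how it lands exactly on $H_j$ and $\tfrac{1}{2}H_j$.

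First I would recall that $\sdim(F)=\sum_k k\,v_k(F)=P_F'(1)$, where $P_F(t)=\sum_k v_k(F)\,t^k$ is the intrinsic-volume generating polynomial. Applying Theorem~\ref{thm:klivans} and differentiating in $t$ gives
\begin{equation*}
   \sum_{F\in\mR_j(\A)} \sdim(F) \;=\; \sum_{F\in\mR_j(\A)} P_F'(1) \;=\; \frac{d}{dt}\Big|_{t=1}(-1)^j\chi_{\A,j}(-t) \;=\; -(-1)^j\chi_{\A,j}'(-1) .
\end{equation*}
Dividing by $r_j(\A)=(-1)^j\chi_{\A,j}(-1)$ (Zaslavsky, Theorem~\ref{thm:zas1}) yields the clean identity
\begin{equation*}
   \Expect[\sdim(F)] \;=\; \frac{1}{r_j(\A)}\sum_{F\in\mR_j(\A)}\sdim(F) \;=\; -\frac{\chi_{\A,j}'(-1)}{\chi_{\A,j}(-1)} ,
\end{equation*}
which is the logarithmic derivative of $\chi_{\A,j}$ evaluated at $-1$.

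Next I would plug in the factored forms $\chi_{\A_A,j}(t)=\sttwo{d}{j}\prod_{i=0}^{j-1}(t-i)$ and $\chi_{\A_{BC},j}(t)=\sttwo{d+1}{j+1}\prod_{i=0}^{j-1}(t-2i-1)$. The constants cancel in the logarithmic derivative, leaving
\begin{align*}
   -\frac{\chi_{\A_A,j}'(-1)}{\chi_{\A_A,j}(-1)} &\;=\; -\sum_{i=0}^{j-1}\frac{1}{-1-i} \;=\; \sum_{i=1}^{j}\frac{1}{i} \;=\; H_j , \\
   -\frac{\chi_{\A_{BC},j}'(-1)}{\chi_{\A_{BC},j}(-1)} &\;=\; -\sum_{i=0}^{j-1}\frac{1}{-2-2i} \;=\; \frac{1}{2}\sum_{i=1}^{j}\frac{1}{i} \;=\; \tfrac{1}{2}H_j ,
\end{align*}
which is exactly the claim.

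There is no real obstacle here: the nontrivial content has already been spent in Theorem~\ref{thm:klivans} and in the computation of the $j$th-level characteristic polynomials of the Coxeter arrangements. The only thing to check is that one is allowed to exchange differentiation and the finite sum over $\mR_j(\A)$, which is immediate, and to verify the two arithmetic simplifications $\sum_{i=0}^{j-1}\tfrac{1}{1+i}=H_j$ and $\sum_{i=0}^{j-1}\tfrac{1}{2(1+i)}=\tfrac12 H_j$. No further ingredients are needed.
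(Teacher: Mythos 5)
Your argument is correct and is essentially identical to the paper's own derivation: both express $\Expect[\sdim(F)]$ as the logarithmic derivative $-\chi_{\A,j}'(-1)/\chi_{\A,j}(-1)$ via Theorem~\ref{thm:klivans} (differentiated at $t=1$) and Zaslavsky's Theorem, and then evaluate it using the factored forms of $\chi_{\A_A,j}$ and $\chi_{\A_{BC},j}$. The sign bookkeeping and the final harmonic-number simplifications in your write-up check out.
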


\bibliographystyle{alpha}
\bibliography{polycones_bib}

\end{document}